\newtheorem{tm}{tm}[section]
\newtheorem{theorem}[tm]{Theorem}
\newtheorem{lemma}[tm]{Lemma}
\newtheorem{corollary}[tm]{Corollary}
\newtheorem{proposition}[tm]{Proposition}
\newtheorem{definition}[tm]{Definition}
\newcommand{\process}[1]{\{#1_t\}_{t\geq0}}
\newcommand{\chain}[1]{\{#1_n\}_{n\geq0}}
\newcommand {\R} {\ensuremath{\mathbb{R}}}
\newcommand {\N} {\ensuremath{\mathbb{N}}}
\newcommand {\CC} {\ensuremath{\mathbb{C}}}
\numberwithin{equation}{section}
\def\be{\begin{equation}}
\def\ee{\end{equation}}
\begin{document}

 \title{Long-time behavior for a class of Feller processes}
 \author{Nikola Sandri\'{c}\\ Department of Mathematics\\
         Faculty of Civil Engineering, University of Zagreb, Zagreb,
         Croatia \\
        Email: nsandric@grad.hr }

 \maketitle
\begin{center}
{
\medskip

} \end{center}

\begin{abstract}
In this paper, as a main result,  we derive a Chung-Fuchs type
condition for the recurrence   of   Feller processes associated with
pseudo-differential operators. In the L\'evy process case, this
condition  reduces to the classical and well-known Chung-Fuchs
condition. Further, we also discuss the recurrence and transience of
Feller processes with respect to the dimension of the state space
and Pruitt indices and  the recurrence and transience of
Feller-Dynkin diffusions and stable-like processes. Finally, in the
one-dimensional symmetric case, we study perturbations of Feller
processes which do not affect their recurrence and transience
properties, and we derive sufficient conditions for their recurrence
and transience in terms of the corresponding L\'evy measure. In
addition, some comparison conditions for recurrence and transience
also in terms of the L\'evy measures are obtained.

\end{abstract}
{\small \textbf{AMS 2010 Mathematics Subject Classification:} 60J75,
60J25, 60G17} \smallskip

\noindent {\small \textbf{Keywords and phrases:} Feller process,
Feller-Dynkin diffusion,  L\'evy measure, Pruitt indices,
recurrence,
 stable-like process, symbol,
 transience}

%
%
%
%


\section{Introduction}\label{s1}
\ \ \ \

In this paper, we study the recurrence and transience property of
Feller processes associated with  pseudo-differential operators in
terms of the symbol. To be more precise, let
$(\Omega,\mathcal{F},\{\mathbb{P}^{x}\}_{x\in\R^{d}},$
$\process{\mathcal{F}},\process{\theta},\process{M})$, $\process{M}$
in the sequel, be a Markov process with  state space
$(\R^{d},\mathcal{B}(\R^{d}))$, where $d\geq1$ and
$\mathcal{B}(\R^{d})$ denotes the Borel $\sigma$-algebra on
$\R^{d}$. A family of linear operators $\process{P}$ on
$B_b(\R^{d})$ (the space of bounded and Borel measurable functions),
defined by $$P_tf(x):= \mathbb{E}^{x}[f(M_t)],\quad t\geq0,\
x\in\R^{d},\ f\in B_b(\R^{d}),$$ is associated with the process
$\process{M}$. Since $\process{M}$ is a Markov process, the family
$\process{P}$ forms a \emph{semigroup} of linear operators on the
Banach space $(B_b(\R^{d}),||\cdot||_\infty)$, that is, $P_s\circ
P_t=P_{s+t}$ and $P_0=I$ for all $s,t\geq0$. Here,
$||\cdot||_\infty$ denotes the supremum norm on the space
$B_b(\R^{d})$. Moreover, the semigroup $\process{P}$ is
\emph{contractive}, that is, $||P_tf||_{\infty}\leq||f||_{\infty}$
for all $t\geq0$ and all $f\in B_b(\R^{d})$, and \emph{positivity
preserving}, that is, $P_tf\geq 0$ for all $t\geq0$ and all $f\in
B_b(\R^{d})$ satisfying $f\geq0$. The \emph{infinitesimal generator}
$(\mathcal{A},\mathcal{D}_{\mathcal{A}})$ of the semigroup
$\process{P}$ (or of the process $\process{M}$) is a linear operator
$\mathcal{A}:\mathcal{D}_{\mathcal{A}}\longrightarrow B_b(\R^{d})$
defined by
$$\mathcal{A}f:=
  \lim_{t\longrightarrow0}\frac{P_tf-f}{t},\quad f\in\mathcal{D}_{\mathcal{A}}:=\left\{f\in B_b(\R^{d}):
\lim_{t\longrightarrow0}\frac{P_t f-f}{t} \ \textrm{exists in}\
||\cdot||_\infty\right\}.$$

A Markov process $\process{M}$ is said to be a \emph{Feller process}
if its corresponding  semigroup $\process{P}$ forms a \emph{Feller
semigroup}. This means that the family $\process{P}$ is a semigroup
of linear operators on the Banach space
$(C_\infty(\R^{d}),||\cdot||_{\infty})$  and it is \emph{strongly
continuous}, that is,
  $$\lim_{t\longrightarrow0}||P_tf-f||_{\infty}=0,\quad f\in
  C_\infty(\R^{d}).$$ Here, $C_\infty(\R^{d})$ denotes
the space of continuous functions vanishing at infinity. Let us
remark that every Feller process possesses the strong Markov
property and has c\`adl\`ag paths (see  \cite[Theorems 3.4.19 and
3.5.14]{jacobIII}). In the case of Feller processes, we call
$(\mathcal{A},\mathcal{D}_{\mathcal{A}})$ the \emph{Feller
generator} for short. Note that, in this case,
$\mathcal{D}_{\mathcal{A}}\subseteq C_\infty(\R^{d})$ and
$\mathcal{A}(\mathcal{D}_{\mathcal{A}})\subseteq C_\infty(\R^{d})$.
Further, if the set of smooth functions with compact support
$C_c^{\infty}(\R^{d})$ is contained in $\mathcal{D}_{\mathcal{A}}$,
that is, if the Feller generator
$(\mathcal{A},\mathcal{D}_{\mathcal{A}})$ of the Feller process
$\process{M}$ satisfies
    \begin{description}
      \item[(C1)]
      $C_c^{\infty}(\R^{d})\subseteq\mathcal{D}_{\mathcal{A}}$,
    \end{description}
 then, according to \cite[Theorem 3.4]{courrege-symbol},
$\mathcal{A}|_{C_c^{\infty}(\R^{d})}$ is a \emph{pseudo-differential
operator}, that is, it can be written in the form
\begin{align*}\mathcal{A}|_{C_c^{\infty}(\R^{d})}f(x) = -\int_{\R^{d}}q(x,\xi)e^{i\langle \xi,x\rangle}
\mathcal{F}(f)(\xi) d\xi,\end{align*}  where $\mathcal{F}(f)(\xi):=
(2\pi)^{-d} \int_{\R^{d}} e^{-i\langle\xi,x\rangle} f(x) dx$ denotes
the Fourier transform of the function $f(x)$. The function $q :
\R^{d}\times \R^{d}\longrightarrow \CC$ is called  the \emph{symbol}
of the pseudo-differential operator. It is measurable and locally
bounded in $(x,\xi)$ and continuous and negative definite as a
function of $\xi$. Hence, by \cite[Theorem 3.7.7]{jacobI}, the
function $\xi\longmapsto q(x,\xi)$ has for each $x\in\R^{d}$ the
following L\'{e}vy-Khintchine representation $$q(x,\xi) =a(x)-
i\langle \xi,b(x)\rangle + \frac{1}{2}\langle\xi,c(x)\xi\rangle -
\int_{\R^{d}}\left(e^{i\langle\xi,y\rangle}-1-i\langle\xi,y\rangle1_{\{z:|z|\leq1\}}(y)\right)\nu(x,dy),$$
where $a(x)$ is a nonnegative Borel measurable function, $b(x)$ is
an $\R^{d}$-valued Borel measurable function,
$c(x):=(c_{ij}(x))_{1\leq i,j\leq d}$ is a symmetric nonnegative
definite $d\times d$ matrix-valued Borel measurable function
 and $\nu(x,dy)$ is a Borel kernel on $\R^{d}\times
\mathcal{B}(\R^{d})$, called the \emph{L\'evy measure}, satisfying
$$\nu(x,\{0\})=0\quad \textrm{and} \quad \int_{\R^{d}}\min\{1,
|y|^{2}\}\nu(x,dy)<\infty,\quad x\in\R^{d}.$$ The quadruple
$(a(x),b(x),c(x),\nu(x,dy))$ is called the \emph{L\'{e}vy-quadruple}
of the pseudo-differential operator
$\mathcal{A}|_{C_c^{\infty}(\R^{d})}$ (or of the symbol $q(x,\xi)$).
In the sequel, we assume the following conditions on the symbol
$q(x,\xi)$:
\begin{description}
  \item[(C2)] $||q(\cdot,\xi)||_\infty\leq c(1+|\xi|^{2})$ for some
  $c\geq0$ and all $\xi\in\R^{d}$
  \item[(C3)] $q(x,0)=a(x)=0$ for all $x\in\R^{d}.$
\end{description}
Let us remark that, according to \cite[Lemma 2.1]{rene-holder},
condition (\textbf{C2}) is equivalent with the boundedness of the
coefficients of the symbol $q(x,\xi)$, that is,
$$||a||_\infty+||b||_\infty+||c||_{\infty}+\left|\left|\int_{\R^{d}}\min\{1,y^{2}\}\nu(\cdot,dy)\right|\right|_{\infty}<\infty,$$
and, according to \cite[Theorem 5.2]{rene-conserv}, condition
(\textbf{C3}) (together with condition (\textbf{C2})) is equivalent
with the conservativeness property of the process $\process{M}$,
that is, $\mathbb{P}^{x}(M_t\in\R^{d})=1$ for all $t\geq0$ and all
$x\in\R^{d}$. In the case when the symbol $q(x,\xi)$ does not depend
on the variable $x\in\R^{d}$, $\process{M}$ becomes a \emph{L\'evy
process}, that is, a stochastic process   with stationary and
independent increments and c\`adl\`ag paths. Moreover, unlike Feller
processes, every L\'evy process is uniquely and completely
characterized through its corresponding symbol (see \cite[Theorems
7.10 and 8.1]{sato-book}). According to this, it is not hard to
check that every L\'evy process satisfies conditions
(\textbf{C1})-(\textbf{C3}) (see \cite[Theorem 31.5]{sato-book}).
Thus, the class of processes we consider in this paper contains the
class of L\'evy processes.

In this paper, our main aim is to investigate the recurrence and
transience property of Feller processes  satisfying conditions
(\textbf{C1})-(\textbf{C3}). Except for L\'evy processes, whose
recurrence and transience property has been  studied extensively in
\cite{sato-book}, a few special cases of this problem have been
considered in the literature. More precisely, in
\cite{bjoern-overshoot}, \cite{franke-periodic},
\cite{franke-periodicerata}, \cite{sandric-periodic},
\cite{sandric-ergodic}, \cite{sandric-spa} and
\cite{sandric-rectrans}, by using different techniques (an overshoot
approach, characteristics of semimartingale approach and an approach
through the Foster-Lypunov drift criteria), the authors have
considered the recurrence and transience  of one-dimensional Feller
processes determined by a symbol of the form
$q(x,\xi)=\gamma(x)|\xi|^{\alpha(x)}$ (stable-like processes), where
$\alpha:\R\longrightarrow(0,2)$ and
$\gamma:\R\longrightarrow(0,\infty)$ (see Section \ref{s2} for the
exact definition of these processes). Further, by using the
Foster-Lyapunov drift criteria (see \cite{meyn-tweedie-III} or
\cite{meyn-tweedie-book}), in \cite{wang-ergodic} the author has
derived sufficient conditions for
 recurrence of one-dimensional Feller processes in terms of their
L\'evy quadruples. Finally, in
 \cite{rene-wang-feller}, by analyzing the characteristic function of  Feller processes, the authors have derived a  Chung-Fuchs type condition for
  transience (see Theorem \ref{tm1.3} for details). In this
 paper, our goal is to extend the above mentioned results in several
 different aspects as well as to answer  some natural questions regarding the recurrence and transience in order to  better understand  the long-time behavior of Feller processes. To be more precise, our main goal is to derive a Chung-Fuchs type
condition for the recurrence of a Feller process. Furthermore, we
study recurrence and transience in relation to the dimension of the
state space and Pruitt indices and recurrence and transience of
Feller-Dynkin diffusions and stable-like processes. Finally, we
study perturbations of symbols which will not affect the recurrence
and transience of the underlying Feller processes and we derive
sufficient conditions for the recurrence and transience in terms of
the underlying L\'evy measure and some comparison conditions for
recurrence and transience also in terms of their L\'evy measures.

Before stating the main results of this paper, we recall  relevant
definitions of the recurrence and transience of Markov processes in
the sense of S. P. Meyn and R. L. Tweedie (see
\cite{meyn-tweedie-book} or \cite{tweedie-mproc}).
\begin{definition}
Let $\process{M}$ be a strong Markov process with c\`adl\`ag paths
on the state space $(\R^{d},\mathcal{B}(\R^{d}))$, $d\geq1$. The
process $\process{M}$ is called
\begin{enumerate}
  \item [(i)] \emph{irreducible} if there exists a $\sigma$-finite measure $\varphi(dy)$ on
$\mathcal{B}(\R^{d})$ such that whenever $\varphi(B)>0$ we have
$\int_0^{\infty}\mathbb{P}^{x}(M_t\in B)dt>0$ for all $x\in\R^{d}$.
  \item [(ii)] \emph{recurrent} if it is
                      $\varphi$-irreducible and if $\varphi(B)>0$ implies $\int_{0}^{\infty}\mathbb{P}^{x}(M_t\in B)dt=\infty$ for all
                      $x\in\R^{d}$.
\item [(iii)] \emph{Harris recurrent} if it is $\varphi$-irreducible and if $\varphi(B)>0$ implies $\mathbb{P}^{x}(\tau_B<\infty)=1$ for all
                      $x\in\R^{d}$, where $\tau_B:=\inf\{t\geq0:M_t\in
                      B\}.$
 \item [(iv)] \emph{transient} if it is $\varphi$-irreducible
                       and if there exists a countable
                      covering of $\R^{d}$ with  sets
$\{B_j\}_{j\in\N}\subseteq\mathcal{B}(\R^{d})$, such that for each
$j\in\N$ there is a finite constant $c_j\geq0$ such that
$\int_0^{\infty}\mathbb{P}^{x}(M_t\in B_j)dt\leq c_j$ holds for all
$x\in\R^{d}$.
\end{enumerate}
\end{definition}
Let us remark that if $\{M_t\}_{t\geq0}$ is a $\varphi$-irreducible
Markov process, then the irreducibility measure $\varphi(dy)$ can be
maximized, that is, there exists a unique ``maximal" irreducibility
measure $\psi(dy)$ such that for any measure $\bar{\varphi}(dy)$,
$\{M_t\}_{t\geq0}$ is $\bar{\varphi}$-irreducible if, and only if,
$\bar{\varphi}\ll\psi$ (see \cite[Theorem 2.1]{tweedie-mproc}).
According to this, from now on, when we refer to irreducibility
measure we actually refer to the maximal irreducibility measure. In
the sequel, we consider  only the so-called open set irreducible
Markov processes, that is,  we consider only $\psi$-irreducible
Markov processes whose maximal irreducibility measure $\psi(dy)$
satisfies the following \emph{open set irreducibility} condition:
\begin{description}
  \item[(C4)] $\psi(O)>0$ for every open set $O\subseteq\R^{d}$.
\end{description}
Obviously, the Lebesgue measure $\lambda(dy)$ satisfies condition
(\textbf{C4}) and a Markov process $\process{M}$ will be
$\lambda$-irreducible if $\mathbb{P}^{x}(M_t\in B)>0$ for all $t>0$
and all $x\in\R^{d}$ whenever $\lambda(B)>0.$ In particular, the
process $\process{M}$ will be $\lambda$-irreducible if the
transition kernel $\mathbb{P}^{x}(M_t\in dy)$ possesses a density
function $p(t,x,y)$ such that $p(t,x,y)>0$ for all $t>0$ and all
$x,y\in\R^{d}.$ If $\process{M}$ is a Feller process satisfying
conditions (\textbf{C1})-(\textbf{C3}) and, additionally, the
following sector condition
\be\label{eq:1.1}\sup_{x\in\R^{d}}|\rm{Im}\,\it{q}(x,\xi)|\leq
  c\inf_{x\in\R^{d}}\rm{Re}\,\it{q}(x,\xi)\ee  for some $0\leq
  c<1$ and all $\xi\in\R^{d}$,
then a sufficient condition for the existence of a density function
$p(t,x,y)$, in terms of  the symbol $q(x,\xi)$, has been given in
\cite[Theorem 1.1]{rene-wang-feller} as the Hartman-Wintner
condition
\be\label{eq:1.2}\lim_{|\xi|\longrightarrow\infty}\frac{\inf_{x\in\R^{d}}\rm{Re}\,\it{q(x,\xi)}}{\log(1+|\xi|)}=\infty\ee
(see also Theorem \ref{tm2.6}). Let us remark that the sector
condition in (\ref{eq:1.1}) means that there is no dominating drift
term. Further, it is well known that every $\psi$-irreducible Markov
process is either recurrent or transient (see \cite[Theorem
2.3]{tweedie-mproc}) and, clearly, every Harris recurrent Markov
process is recurrent but in general, these two properties are not
equivalent. They differ on the set of the irreducibility measure
zero (see \cite[Theorem 2.5]{tweedie-mproc}). However,
 for a Feller process satisfying
conditions (\textbf{C1})-(\textbf{C4}) these two
 properties are equivalent (see  Proposition
  \ref{p2.1}).

Throughout this paper, the symbol $\process{F}$ denotes a Feller
process satisfying conditions (\textbf{C1})-(\textbf{C4}). Such a
process is called a \emph{nice Feller process}.
 We say that $\process{F}$ is a
\emph{symmetric nice Feller process} if its corresponding symbol
satisfies $q(x,\xi)=\rm{Re}\,\it{q}(x,\xi)$, that is, if $b(x)=0$
and $\nu(x,dy)$ are symmetric measures for all $x\in\R^{d}.$ Also,
 a L\'evy process is denoted by  $\process{L}$.

The main result of this paper, the proof of which  is given in
Section \ref{s2}, is the following Chung-Fuchs type condition for
the recurrence  of nice Feller processes.
\begin{theorem}\label{tm1.2}Let  $\process{F}$ be a nice  Feller process  with  symbol $q(x,\xi)$. If $\rm{Re}\,\mathbb{E}^{0}[\it{e}^{\it{i}\langle\xi, F_t\rangle}]\geq\rm{0}$ for all $t\geq0$ and all $\xi\in\R^{d}$ and \be\label{eq:1.3}\int_{\{|\xi|<r\}}\frac{d\xi}{\sup_{x\in\R^{d}}|q(x,\xi)|}=\infty\quad\textrm{for some}\ r>0,\ee then $\process{F}$ is recurrent.
\end{theorem}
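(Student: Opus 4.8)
The plan is to establish recurrence by proving that the expected occupation time of every ball centered at the origin is infinite, i.e. $\int_0^\infty \mathbb{P}^x(F_t \in B) \, dt = \infty$ for balls $B$, which by the Meyn--Tweedie dichotomy (every $\psi$-irreducible process is either recurrent or transient) suffices to rule out transience. The natural route is to pass to the Fourier side. First I would introduce the potential (Green) kernel $U(x,dy) = \int_0^\infty \mathbb{P}^x(F_t \in dy) \, dt$, or rather a resolvent-regularized version $U_\lambda$, and express the occupation time of a test function $f$ with nonnegative Fourier transform through the quantity $\int_0^\infty \mathbb{E}^0[e^{i\langle \xi, F_t\rangle}]\,dt$. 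The hypothesis $\mathrm{Re}\,\mathbb{E}^0[e^{i\langle \xi, F_t\rangle}] \geq 0$ is exactly what guarantees that the relevant Fourier-side integrand is nonnegative, so that lower bounds obtained by discarding the imaginary part or restricting the domain of integration are legitimate.

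The key analytic step is to bound $\mathrm{Re}\,\mathbb{E}^0[e^{i\langle\xi,F_t\rangle}]$ from below in terms of the symbol. Here I would exploit the relationship between the characteristic function and the symbol: for Feller processes satisfying (\textbf{C1})--(\textbf{C3}) one has, via Dynkin's formula / the martingale associated with $e^{i\langle\xi,\cdot\rangle}$, an estimate of the form
\[
\left| \mathbb{E}^0[e^{i\langle\xi,F_t\rangle}] - 1 + \int_0^t \mathbb{E}^0\bigl[q(F_s,\xi) e^{i\langle \xi, F_s\rangle}\bigr]\,ds \right| = 0,
\]
which on taking real parts and using $\sup_x |q(x,\xi)|$ as a uniform bound yields a Gronwall-type inequality
\[
\mathrm{Re}\,\mathbb{E}^0[e^{i\langle\xi,F_t\rangle}] \geq 1 - t\,\sup_{x\in\R^d}|q(x,\xi)|
\]
for small $t$, and more usefully $\mathrm{Re}\,\mathbb{E}^0[e^{i\langle\xi,F_t\rangle}] \geq e^{-t\sup_x|q(x,\xi)|}$ type control. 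Integrating in $t$ then produces, after a Tauberian/Abelian resolvent argument, a lower bound for $\int_0^\infty e^{-\lambda t}\,\mathrm{Re}\,\mathbb{E}^0[e^{i\langle\xi,F_t\rangle}]\,dt$ of order $1/(\lambda + \sup_x|q(x,\xi)|)$. Combining this with a mollifier $f$ whose Fourier transform is supported in $\{|\xi|<r\}$ and is nonnegative gives
\[
\int_0^\infty e^{-\lambda t}\,\mathbb{E}^0[f(F_t)]\,dt \;\gtrsim\; \int_{\{|\xi|<r\}} \frac{\mathcal{F}(f)(\xi)}{\lambda + \sup_{x}|q(x,\xi)|}\,d\xi .
\]

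Finally I would let $\lambda \downarrow 0$ and invoke the monotone convergence theorem: the divergence hypothesis (\ref{eq:1.3}) forces the right-hand side to blow up, hence $\int_0^\infty \mathbb{E}^0[f(F_t)]\,dt = \infty$, which translates into infinite expected occupation time of a neighborhood of the origin and rules out transience. By the dichotomy the process is recurrent. I expect the main obstacle to be the second step: unlike the L\'evy case, the characteristic function $\mathbb{E}^0[e^{i\langle\xi,F_t\rangle}]$ does not factor as $e^{-t q(\xi)}$, so one cannot directly write the Fourier transform of the transition density in closed form. Controlling the error term $\int_0^t \mathbb{E}^0[q(F_s,\xi)(e^{i\langle\xi,F_s\rangle}-1)]\,ds$ uniformly in $x$, and justifying the Dynkin-formula manipulation for the (non-compactly-supported) exponential test function under only (\textbf{C1})--(\textbf{C3}), is where the real work lies; this is presumably where the boundedness from (\textbf{C2}) and an approximation of $e^{i\langle\xi,\cdot\rangle}$ by $C_c^\infty$ functions enters decisively.
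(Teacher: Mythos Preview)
Your proposal is correct and follows essentially the same route as the paper: the paper's Lemma~\ref{l2.2} establishes precisely the linear bound $\mathrm{Re}\,\Phi_t(x,\xi)\geq 1-2t\sup_z|q(z,\xi)|$ via the Dynkin-type identity you describe (no Gronwall is actually needed---the bound is immediate), then splits $\int_0^\infty$ into $[0,t_0(\xi)]$ (where the exponential lower bound holds) and $[t_0(\xi),\infty)$ (where the hypothesis $\mathrm{Re}\,\Phi_t\geq 0$ lets one drop the tail), and passes to the limit $\alpha\to 0$ via Fatou exactly as you outline. One small technical adjustment: the paper chooses the test function $g$ to be compactly supported with \emph{nonnegative} (but not compactly supported) Fourier transform---a Fej\'er-type kernel---rather than the other way around, since one needs $g\leq 1_{(-a,a)^d}$ to dominate the occupation time of a bounded neighbourhood from below.
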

The  Chung-Fuchs type condition for transience of nice Feller
processes has been derived in \cite[Theorem 1.2]{rene-wang-feller}
and it reads as follows.
\begin{theorem}\label{tm1.3}Let  $\process{F}$ be a nice Feller process  with  symbol $q(x,\xi)$.
If $\process{F}$ satisfies the sector condition in $(\ref{eq:1.1})$
and
\be\label{eq:1.4}\int_{\{|\xi|<r\}}\frac{d\xi}{\inf_{x\in\R^{d}}\rm{Re}\,\it{q(x,\xi)}}<\infty\quad\textrm{for
some}\ r>0,\ee then $\process{F}$ is transient.
\end{theorem}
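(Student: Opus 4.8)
The plan is to establish transience in the sense of Meyn and Tweedie by bounding, uniformly in the starting point, the expected occupation time of small balls, and then invoking the covering definition. Throughout write $e_\xi(y):=e^{i\langle\xi,y\rangle}$, $\phi_x(t,\xi):=\mathbb{E}^{x}[e^{i\langle\xi,F_t-x\rangle}]$, and $Q(\xi):=\inf_{x\in\R^{d}}\mathrm{Re}\,q(x,\xi)$; by the L\'evy--Khintchine representation together with $a(x)=0$ (condition (\textbf{C3})) one has $\mathrm{Re}\,q(x,\xi)=\tfrac12\langle\xi,c(x)\xi\rangle+\int(1-\cos\langle\xi,y\rangle)\nu(x,dy)\geq0$, so $Q(\xi)\geq0$. \textbf{Step 1 (reduction).} First I would reduce the claim to the single estimate
$$\sup_{x\in\R^{d}}\mathbb{E}^{x}\Big[\int_{0}^{\infty}\mathbf{1}_{\{|F_t-x|<\rho\}}\,dt\Big]<\infty$$
for some $\rho>0$. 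Covering $\R^{d}$ by countably many balls $B_\rho(x_j)$ with $x_j$ ranging over a countable dense set, the strong Markov property at the hitting time $\tau_{B_\rho(x_j)}$ (together with the c\`adl\`ag paths, which force $F_{\tau}\in\overline{B_\rho(x_j)}$, and the inclusion $B_\rho(x_j)\subseteq B_{2\rho}(y)$ for $y\in\overline{B_\rho(x_j)}$) turns the above bound into $\sup_{x}\int_0^\infty\mathbb{P}^{x}(F_t\in B_\rho(x_j))\,dt\leq M'<\infty$ with $M'$ independent of $j$. Since $\process{F}$ is $\psi$-irreducible by (\textbf{C4}), this is exactly transience.

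\textbf{Step 2 (Fourier/resolvent identity).} To produce the ball occupation time I would test against a Fej\'er-type kernel $\chi\geq0$ with $\chi\geq c_0>0$ on $B_\rho(0)$ whose Fourier transform $\hat\chi$ is nonnegative and supported in $\{|\xi|<r\}$, so that only low frequencies enter. Since $\chi$ is real and even, Fubini gives, for $\lambda>0$,
$$\mathbb{E}^{x}\Big[\int_{0}^{\infty}e^{-\lambda t}\chi(F_t-x)\,dt\Big]=\int_{\{|\xi|<r\}}\hat\chi(\xi)\,\Big(\int_{0}^{\infty}e^{-\lambda t}\,\mathrm{Re}\,\phi_x(t,\xi)\,dt\Big)\,d\xi .$$
The left-hand side dominates $c_0\,\mathbb{E}^{x}\big[\int_0^\infty e^{-\lambda t}\mathbf{1}_{\{|F_t-x|<\rho\}}\,dt\big]$, so once the key estimate of Step 3 is in hand I let $\lambda\downarrow0$ by monotone convergence to recover the bound required in Step 1.

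\textbf{Step 3 (the crux).} The heart of the proof is the uniform resolvent bound
$$\int_{0}^{\infty}e^{-\lambda t}\,\mathrm{Re}\,\phi_x(t,\xi)\,dt\le\frac{C}{\lambda+Q(\xi)}$$
valid for all $x\in\R^{d}$ and $\lambda>0$, with $C$ depending only on the sector constant in (\ref{eq:1.1}). Granting it, the right-hand side of Step 2 is at most $C\,\|\hat\chi\|_\infty\int_{\{|\xi|<r\}}d\xi/Q(\xi)$, which is finite \emph{precisely} by the transience hypothesis (\ref{eq:1.4}); this closes the argument. To prove the bound I would start from $\mathcal{A}e_\xi=-q(\cdot,\xi)e_\xi$ and the resolvent equation $U^\lambda(\lambda-\mathcal{A})g=g$ (justifying the application to $e_\xi\notin C_\infty(\R^{d})$ by truncation, using the local boundedness of $q$ and conservativeness from (\textbf{C1})--(\textbf{C3})); dividing by $e_\xi(x)$ yields the exact identity
$$\lambda\int_{0}^{\infty}e^{-\lambda t}\phi_x(t,\xi)\,dt=1-\int_{0}^{\infty}e^{-\lambda t}\,\mathbb{E}^{x}\big[q(F_t,\xi)\,e^{i\langle\xi,F_t-x\rangle}\big]\,dt .$$
One then estimates the real part, using the sector condition in the form $\sup_{x}|\mathrm{Im}\,q(x,\xi)|\le c\,Q(\xi)$ to bound the imaginary contribution uniformly by $Q(\xi)$.

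\textbf{Main obstacle.} In the L\'evy case $q(x,\xi)=\psi(\xi)$ the identity above collapses to $\int_0^\infty e^{-\lambda t}\phi_x\,dt=(\lambda+\psi(\xi))^{-1}$ and the resolvent bound is immediate, recovering the classical Chung--Fuchs criterion. The genuine difficulty in the Feller setting is that $\mathbb{E}^{x}[q(F_t,\xi)e^{i\langle\xi,F_t-x\rangle}]$ is \emph{not} equal to $\psi(\xi)\phi_x(t,\xi)$: the symbol is evaluated at the random current position $F_t$, so the effective averaging weight $e^{-\lambda t}\mathbb{E}^{x}[\,\cdot\,e^{i\langle\xi,F_t-x\rangle}]\,dt$ is complex-valued and one cannot simply factor out a single symbol value. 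This is exactly where the sector condition (\ref{eq:1.1}) is indispensable: by keeping $\mathrm{Im}\,q$ subordinate to $Q(\xi)$ uniformly in $x$, it lets the position-dependent symbol be replaced by the uniform lower bound $Q(\xi)$ in the resolvent estimate. I expect this uniform bound to be the only genuinely hard step; Steps 1--2 are routine given (\textbf{C1})--(\textbf{C4}) and the local boundedness of $q$.
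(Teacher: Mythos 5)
First, a remark on context: the paper does not prove Theorem \ref{tm1.3} itself; it quotes it from the reference \cite{rene-wang-feller}, and the only ingredient it records is the decay estimate $|\Phi_t(x,\xi)|\leq\exp\left[-\frac{t}{16}\inf_{z}\mathrm{Re}\,q(z,2\xi)\right]$ from that paper. Your Steps 1 and 2 are sound and match the standard architecture of such proofs (uniform occupation-time bounds for balls, a Fej\'er kernel, Parseval, $\lambda\downarrow0$). The genuine gap is Step 3, which you rightly call the crux but do not actually establish, and the route you sketch for it does not work. Your resolvent identity $\lambda u=1-\int_0^\infty e^{-\lambda t}\,\mathbb{E}^{x}[q(F_t,\xi)e^{i\langle\xi,F_t-x\rangle}]\,dt$, with $u:=\int_0^\infty e^{-\lambda t}\phi_x(t,\xi)\,dt$, is correct (it is the Laplace transform of the Dynkin-type formula the paper uses in Lemma \ref{l2.2}). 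But to deduce $\mathrm{Re}\,u\leq C/(\lambda+Q(\xi))$ you must move $Q(\xi)u$ to the left and then bound $\mathrm{Re}\int_0^\infty e^{-\lambda t}\,\mathbb{E}^{x}\bigl[(q(F_t,\xi)-Q(\xi))e^{i\langle\xi,F_t-x\rangle}\bigr]dt$ from below by a constant independent of $\lambda$. The real part of the integrand is $(\mathrm{Re}\,q(F_t,\xi)-Q(\xi))\cos\langle\xi,F_t-x\rangle-\mathrm{Im}\,q(F_t,\xi)\sin\langle\xi,F_t-x\rangle$; the first term can be negative because of the cosine, its size is of order $\sup_{y}\mathrm{Re}\,q(y,\xi)-Q(\xi)$, and the sector condition gives no control of that quantity by $Q(\xi)$. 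The trivial bound on the Laplace integral is then of order $1/\lambda$, which blows up as $\lambda\downarrow0$. The sector condition only subordinates $\mathrm{Im}\,q$ to $\mathrm{Re}\,q$ pointwise in $x$; it does nothing about the oscillation of the cosine against the spatially varying real part, which is exactly the obstacle you name in your final paragraph and then assert away.

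The known argument closes this gap by a different mechanism: one proves the pointwise-in-$t$ decay $|\Phi_t(x,\xi)|\leq\exp\left[-\frac{t}{16}\inf_{z}\mathrm{Re}\,q(z,2\xi)\right]$ by combining a short-time estimate for $1-|\Phi_{t_0}(x,\xi)|^{2}$ (the squared modulus is the characteristic function of the symmetrized process, whence the doubling $\xi\mapsto2\xi$ and the constant $1/16$) with an iteration over time intervals via the Markov property, taking suprema over the starting point at each step. Integrating that bound in $t$ yields your resolvent estimate, with $Q(2\xi)$ in place of $Q(\xi)$, which is harmless for the integral test in (\ref{eq:1.4}). To complete your proof you must either prove this decay estimate or cite it; the resolvent identity plus the sector condition alone will not produce it.
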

In the case  when $\process{F}$ is a L\'evy process  with  symbol
$q(\xi)$, the L\'evy-Khintchine formula yields
 $\mathbb{E}^{0}[\it{e}^{\it{i}\langle\xi, L_t\rangle}]=e^{-tq(\xi)}$ for all $t\geq0$ and all $\xi\in\R^{d}$ (see \cite[Theorems 7.10 and 8.1]{sato-book}). In particular,  if $\process{F}$ is a symmetric L\'evy process,  then
$\rm{Re}\,\mathbb{E}^{0}[\it{e}^{\it{i}\langle\xi,
L_t\rangle}]=e^{-tq(\xi)}\geq\rm{0}$. Thus,  we get the well-known
Chung-Fuchs conditions (see \cite[Theorem 37.5]{sato-book}). This
shows that the conditions of Theorems \ref{tm1.2} and \ref{tm1.3}
are sharp for L\'evy processes. Clearly,  for each frozen
$x\in\R^{d}$, $q(x,\xi)$ is the symbol of some L\'evy process
$\process{L^{x}}$. Thus, intuitively, Theorem \ref{tm1.2} says that
if all the L\'evy processes $\process{L^{x}}$, $x\in\R^{d}$, are
recurrent, then the Feller process $\process{F}$ is also recurrent.
Similarly, Theorem \ref{tm1.3} says that if all the L\'evy processes
$\process{L^{x}}$, $x\in\R^{d}$, are transient, then the Feller
process $\process{F}$ is also transient.

As is well known, the fact whether or not a L\'evy process is
recurrent or transient depends on the dimension of the state space.
Hence, it is natural to expect that a similar result holds in the
situation of nice Feller processes.  In Theorem \ref{tm2.8}, we
discuss this dependence, which again generalizes the L\'evy process
situation (see \cite[Theorems 37.8 and 37.14]{sato-book}). More
precisely, we prove that when $d=1,2$ and
$q(x,\xi)=\rm{Re}\,\it{q}(x,\xi)$ for all $x\in\R^{d}$, then
$$\sup_{x\in\R^{d}}\int_{\R^{d}}|y|^{2}\nu(x,dy)<\infty$$ implies
(\ref{eq:1.3}), and when $d\geq3$, then
$$\liminf_{|\xi|\longrightarrow0}\frac{\sup_{c>0}\inf_{x\in\R^{d}}\left(\langle\xi,c(x)\xi\rangle+\int_{\{|y|\leq
c\}}\langle\xi,y\rangle^{\rm{2}}\nu(\it{x},dy)\right)}{|\xi|^{2}}>\rm{0},$$
implies (\ref{eq:1.4}).  In particular, a symmetric
\emph{Feller-Dynkin diffusion}, that is, a symmetric nice Feller
process determined by a symbol of the form
$q(x,\xi)=\frac{1}{2}\langle\xi,c(x)\xi\rangle,$ is recurrent if,
and only if, $d=1,2$ (see Theorem \ref{tm2.9}).

Recall that a \emph{rotationally invariant stable L\'evy process}
$\process{L}$ is a L\'evy process with symbol given by
$q(\xi)=\gamma|\xi|^{\alpha}$, where  $\alpha\in(0,2]$ and
$\gamma\in(0,\infty)$. The parameters $\alpha$ and $\gamma$ are
called the stability parameter and the scaling parameter,
respectively (see \cite[Chapter 3]{sato-book} for details). Note
that when $\alpha=2$, then $\process{L}$ becomes a Brownian motion.
It is well known that the recurrence and transience property of
$\process{L}$ depends on the index of stability $\alpha$. More
precisely, if $d\geq3$, then $\process{L}$ is always transient, if
$d=2$, then $\process{L}$ is recurrent if, and only if, $\alpha=2$
and if $d=1$, then $\process{L}$ is recurrent if, and only if,
$\alpha\geq1$ (see \cite[Theorems 37.8, 37.16 and
37.18]{sato-book}). The notion of stable L\'evy processes has been
generalized  in \cite{bass-stablelike}. More precisely, under some
technical assumptions on the functions
$\alpha:\R^{d}\longrightarrow(0,2)$ and
$\gamma:\R^{d}\longrightarrow(0,\infty)$ (see Section \ref{s2} for
details),  in \cite{bass-stablelike} and  \cite[Theorem
3.3]{rene-wang-feller} it has been shown that there exists a unique
nice Feller process, called a \emph{stable-like process}, determined
by a symbol of the form $q(x,\xi)=\gamma(x)|\xi|^{\alpha(x)}$. In
Theorem \ref{tm2.10} and Corollary \ref{c3.3}, we discuss the
recurrence and transience property of stable-like processes. Next,
the concept of the indices of stability can be generalized to
general L\'evy processes through the Pruitt indices (see
\cite{pruitt}). The Pruitt indices for nice Feller processes have
been introduced in \cite{rene-holder}. In Theorems \ref{tm2.12} and
\ref{tm2.13}, we also discuss  the recurrence and transience
property of nice Feller processes, as well as of L\'evy processes,
in terms of the Pruitt indices.

A natural problem which arises is to determine  those perturbations
of the symbol (or the L\'evy quadruple) which will not affect the
recurrence or transience property of the underlying nice Feller
process. In the one-dimensional symmetric case, in Theorem
\ref{tm3.1}, we prove that if $\process{F^{1}}$ and
$\process{F^{2}}$ are two nice Feller process with  L\'evy measures
$\nu_1(x,dy)$ and $\nu_2(x,dy)$, respectively, such that
$$\sup_{x\in\R}\int_{\R}y^{2}|\nu_1(x,dy)-\nu_2(x,dy)|<\infty,$$
then $\process{F^{1}}$  and $\process{F^{2}}$ are recurrent or
transient at the same time. Here, $|\mu(dy)|$ denotes the total
variation measure of the signed measure $\mu(dy)$. In particular,
we conclude that the recurrence and transience property of
one-dimensional symmetric nice Feller processes depends only on big
jumps. Further, in general it is not always easy to compute the
Chung-Fuchs type conditions in (\ref{eq:1.3}) and (\ref{eq:1.4}).
According to this, in the one-dimensional symmetric case, in
Theorems \ref{tm3.7} and \ref{tm3.9}, we give necessary and
sufficient condition for the recurrence and transience in terms of
the L\'evy measure. Finally, in Theorems \ref{tm3.12} and
\ref{tm3.13}, we give some comparison conditions for the recurrence
and transience  in terms of the L\'evy measures.

In the L\'evy process case, the main ingredient in the proof of the
Chung-Fuchs conditions is the fact that
$$\mathbb{E}^{x}[e^{i\langle\xi,L_t-x\rangle}]=e^{-tq(\xi)}$$ for all $t\geq0$ and all $x,\xi\in\R^{d}$, where
$q(\xi)$ is the symbol of the L\'evy process $\process{L}$ (see
\cite[Theorems 7.10 and 8.1]{sato-book}). This relation is no longer
true for a general nice Feller process $\process{F}$. Since
$\process{F}$ does not have stationary and independent increments,
in particular, it is
 not spatially homogeneous, the characteristic function of $F_t$, $t\geq0$, will
      now depend on the starting point $x\in\R^{d}$ and $q(x,\xi)$ is no longer the characteristic exponent of $\process{F}$. Anyway, it is
      natural to expect that  $$\mathbb{E}^{x}[e^{i\langle\xi,F_t-x\rangle}]\approx
      e^{-tq(x,\xi)}$$  for all $t\geq0$ and all $x,\xi\in\R^{d}$.
      According to this, as the main step in the proof of Theorem \ref{tm1.2}
      we derive a lower bound for $\mathbb{E}^{\it{x}}[\it{e}^{\it{i}\langle\xi,
      F_t-x\rangle}]$. More precisely, in Lemma \ref{l2.2}, we prove  that for any $\varepsilon>0$ and $\xi\in\R^{d}$ there exists
$t_0:=t_0(\varepsilon,\xi)>0$, such that for all $x\in\R^{d}$ and
all $t\in[0,t_0]$ we have
$$\rm{Re}\,\mathbb{E}^{\it{x}}[\it{e}^{\it{i}\langle\xi,
      F_t-x\rangle}]\geq\exp\left[-(\rm{2}+\varepsilon)\it{t}\sup_{\it{z}\in\R^{d}}|\it{q}(\it{z},\xi)|\right].$$
The upper bound for $\mathbb{E}^{x}[\it{e}^{\it{i}\langle\xi,
F_t-x\rangle}]$
 has been
given in \cite[Theorem 2.7]{rene-wang-feller} and it reads as
follows
$$\left|\mathbb{E}^{x}[\it{e}^{\it{i}\langle\xi, F_t-x\rangle}]\right|\leq\exp\left[-\frac{t}{\rm{16}}\inf_{z\in\R^{d}}\rm{Re}\,\it{q}(z,\rm{2}\it{\xi})\right]$$ for all $t\geq0$ and all $x,\xi\in\R^{d}$.
The proofs of the remaining  results presented in this paper are
mostly based on the Chung-Fuchs type conditions in (\ref{eq:1.3})
and (\ref{eq:1.4}) and the analysis of the symbols.

The sequel of this paper is organized as follows. In Section
\ref{s2}, we prove Theorem \ref{tm1.2} and  discuss  the recurrence
and transience  of nice Feller processes with respect to the
dimension of the state space and Pruitt indices and  the recurrence
and transience of Feller-Dynkin diffusions and  stable-like
processes. Finally, in Section \ref{s3}, we discuss the recurrence
and transience property of one-dimensional symmetric nice Feller
processes. More precisely, we study perturbations of  nice Feller
processes and we derive sufficient conditions for the recurrence and
transience in terms of the  L\'evy measure and  give some comparison
conditions for the recurrence and transience property in terms of
the L\'evy measures.

\section{Recurrence and transience of general nice Feller
processes}\label{s2}

\ \ \ \ We start this section with some preliminary and auxiliary
results regarding the recurrence and transience property of nice
Feller processes which  we need in the sequel. First, recall that a
semigroup $\process{P}$ on $(B_b(\R^{d}),||\cdot||_\infty)$ is
called a \emph{$C_b$-Feller semigroup} if $P_t(C_b(\R^{d}))\subseteq
C_b(\R^{d})$ for all $t\geq0$ and it is called a \emph{strong Feller
semigroup} if $P_t(B_b(\R^{d}))\subseteq C_b(\R^{d})$ for all
$t\geq0$. Here, $C_b(\R^{d})$  denotes the space of continuous and
bounded functions. For sufficient conditions for a Feller semigroup
to be a $C_b$-Feller semigroup or a strong Feller semigroup see
\cite{rene-conserv} and \cite{rene-wang-strong}.
\begin{proposition}\label{p2.1} Let $\process{F}$ be a nice Feller
process. Then the following properties are equivalent:
\begin{enumerate}
\item [(i)] $\process{F}$ is recurrent
\item[(ii)] $\process{F}$ is  Harris
recurrent
\item[(iii)]  there exists
$x\in\R^{d}$ such that
$$\mathbb{P}^{x}\left(\liminf_{t\longrightarrow\infty}|F_t-x|=0\right)=1$$
\item [(iv)]  there exists $x\in\R^{d}$ such that  $$\int_{0}^{\infty}\mathbb{P}^{x}(F_t\in O_x)dt=\infty$$ for
  all open neighborhoods $O_x\subseteq\R^{d}$ around $x$
\item [(v)]  there exists  a compact set $C\subseteq\R^{d}$ such that $$\mathbb{P}^{x}(\tau_C<\infty)=1$$ for all
                      $x\in\R^{d}$
                        \item [(vi)]  for each initial position $x\in\R^{d}$ and each   covering
$\{O_n\}_{n\in\N}$ of $\R^{d}$ by open bounded sets we have
$$\mathbb{P}^{x}\left(\bigcap_{n=1}^{\infty}\bigcup_{m=0}^{\infty}\left\{\int_m^{\infty}1_{\{F_t\in
O_n\}}dt=0\right\}\right)=0.$$ In other words, $\process{F}$ is
recurrent if, and only if, for each initial position $x\in\R^{d}$
the event $\{F_t\in C^{c}\ \textrm{for any compact set}\
C\subseteq\R^{d}\ \textrm{and all}\ t\geq0\ \textrm{sufficiently
large}\}$ has probability $0$.
\end{enumerate}
In addition, if we assume that $\process{F}$ is a strong Feller
process, then  all the statements above are also equivalent to:
\begin{enumerate}
  \item [(vii)] there exists a compact set $C\subseteq\R^{d}$ such that $$\int_{0}^{\infty}\mathbb{P}^{x}(F_t\in C)dt=\infty$$ for
  all $x\in\R^{d}$
  \item [(viii)]  there
  exist $x\in\R^{d}$ and an open bounded set $O\subseteq\R^{d}$ such
  that $$\mathbb{P}^{x}\left(\int_0^{\infty}1_{\{F_t\in
  O\}}dt=\infty\right)>0.$$
\end{enumerate}
\end{proposition}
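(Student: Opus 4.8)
The plan is to prove the listed equivalences by combining the general recurrent/transient dichotomy for $\psi$-irreducible Markov processes (\cite[Theorem 2.3]{tweedie-mproc}) with the two features special to a nice Feller process: the open set irreducibility condition (\textbf{C4}), which lets us replace ``$\psi(B)>0$'' by ``$B$ open'' throughout, together with the c\`adl\`ag path regularity and the strong Markov property. Several implications are then almost immediate: if $\process{F}$ is recurrent, then by the very definition of recurrence and (\textbf{C4}) we get $\int_0^\infty \mathbb{P}^x(F_t\in O)\,dt=\infty$ for \emph{every} $x\in\R^{d}$ and \emph{every} open set $O$, which at once yields (iv) and (vi). Thus the substance of the proof lies in the reverse implications and in the upgrade from recurrence to Harris recurrence.

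For the forward direction to the pathwise statements (iii) and (v), I would argue as follows. Infinite expected occupation time of every small ball $B(x,\varepsilon)$, combined with the strong Markov property applied at successive return times and the right-continuity of paths, forces the process started at $x$ to re-enter $B(x,\varepsilon)$ for arbitrarily large $t$ almost surely; letting $\varepsilon\downarrow0$ along a sequence gives $\liminf_{t\to\infty}|F_t-x|=0$ a.s., which is (iii), and taking $C=\overline{B(x,\varepsilon)}$ gives $\mathbb{P}^{x}(\tau_C<\infty)=1$, from which (v) follows after propagating to all starting points by irreducibility.

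The reverse implications I would handle uniformly through the dichotomy: it suffices to show that each of (iii)--(viii) is incompatible with transience, whence $\process{F}$ must be recurrent. The key fact to invoke here is that, under irreducibility and the Feller property, compact sets are petite, so that in the transient case $\R^{d}$ is covered by uniformly transient sets and in particular every bounded set is uniformly transient; consequently $\sup_{x}\int_0^\infty\mathbb{P}^{x}(F_t\in B)\,dt<\infty$ for bounded $B$ and the process escapes every compact set almost surely. Any of the assumed return or infinite-occupation statements (for instance the infinite occupation of a neighborhood in (iv), or $\mathbb{P}^{x}(\tau_C<\infty)=1$ in (v), or the positive-probability infinite occupation in (viii)) then contradicts this, ruling out transience.

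The delicate points, and where I expect the main obstacle, concern the equivalence (i)$\Leftrightarrow$(ii) and the strong Feller statements (vii),(viii). By \cite[Theorem 2.5]{tweedie-mproc}, recurrence and Harris recurrence of a $\psi$-irreducible process coincide outside a set of $\psi$-measure zero, so the real work is to remove this exceptional null set. The hard part will be showing that the a.s.-return property holds from \emph{every} starting point and not merely $\psi$-almost everywhere: here I would use that, by (\textbf{C4}), from any $x\in\R^{d}$ the process enters a fixed reachable open set in finite time with positive probability, and then chain this with the strong Markov property and the already-established return property from that open set. For (vii) and (viii), the strong Feller property makes $x\mapsto\mathbb{P}^{x}(F_t\in C)$ and the associated occupation functionals lower semicontinuous, which is exactly what is needed to pass from the ``there exists $x$'' occupation statements back to the uniform statement in (i) and thereby close the chain of equivalences.
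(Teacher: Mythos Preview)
Your overall strategy---use the recurrent/transient dichotomy together with the fact that compact sets are petite for a Feller process, and then show that each of (iii)--(viii) is incompatible with transience---is sound and is essentially the same circle of ideas the paper relies on. The paper's proof, however, is almost entirely by citation: (i)$\Leftrightarrow$(ii), (i)$\Leftrightarrow$(iii), (i)$\Leftrightarrow$(iv) are read off directly from \cite[Theorems 4.1, 4.2, 7.1]{tweedie-mproc} and \cite[Theorem 4.3]{bjoern-overshoot}; (i)$\Leftrightarrow$(v) from \cite[Theorem 3.3]{meyn-tweedie-mproc}; (i)$\Leftrightarrow$(vi) from \cite[Theorem 3.3]{tweedie-mproc}; and (i)$\Leftrightarrow$(viii) from \cite[Proposition 2.4]{sandric-periodic}. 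So where you sketch direct arguments (returns via the strong Markov property, escape from compacts under transience, removing the $\psi$-null exceptional set for Harris recurrence), the paper simply invokes the corresponding statements from the Tweedie--Meyn and B\"ottcher literature.

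Two points worth tightening. First, the machinery you are implicitly using (petiteness of compacts, the Tweedie dichotomy in the form you need) requires not just the Feller property but the $C_b$-Feller property of the extended semigroup; the paper records this up front via \cite[Corollary 3.4, Theorem 4.3]{rene-conserv} and \cite[Lemma 2.3]{rene-wang-feller}, and you should do the same before invoking petiteness. Second, for (i)$\Leftrightarrow$(vii) the paper does \emph{not} argue by contradiction with transience as you propose; instead it shows directly, using the strong Feller property and a compactness argument, that
\[
\inf_{x\in C}\int_0^\infty \mathbb{P}^{x}(F_t\in B)\,e^{-t}\,dt>0
\]
for every compact $C$ and every $B$ with $\psi(B)>0$, and then applies \cite[Proposition 2.3]{sandric-periodic}. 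Your route via ``bounded sets are uniformly transient, so (vii) fails under transience'' also works, but note that (vii) as stated requires the occupation time to be infinite for \emph{all} $x$, which you get from transience being ruled out plus recurrence, not merely from the failure of uniform transience at a single point; make sure your chain of implications actually closes back to (i) and does not stop at ``not transient''.
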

\begin{proof}
First, let us remark that  every Feller semigroup $\{P_t\}_{t\geq0}$
has a unique extension onto the space $B_b(\R^{d})$ (see
\cite[Section 3]{rene-conserv}). For notational simplicity, we
denote this extension again by $\{P_t\}_{t\geq0}$. Now, according to
\cite[Corollary 3.4 and Theorem 4.3]{rene-conserv} and \cite[Lemma
2.3]{rene-wang-feller}, $\{P_t\}_{t\geq0}$ is a $C_b$-Feller
semigroup.
\begin{description}
\item[$(i)\Leftrightarrow(ii)$] This is an immediate consequence of \cite[Theorems 4.1, 4.2 and
  7.1]{tweedie-mproc}.
\item[$(i)\Leftrightarrow(iii)$] This is an immediate consequence of  \cite[Theorem
  7.1]{tweedie-mproc}  and \cite[Theorem
4.3]{bjoern-overshoot}.
\item[$(i)\Leftrightarrow(iv)$] This is an immediate consequence of \cite[Theorems
4.1 and 7.1]{tweedie-mproc}.
\item[$(i)\Leftrightarrow(v)$] This is an immediate consequence of (iii) and \cite[Theorem
  3.3]{meyn-tweedie-mproc}.
    \item[$(i)\Leftrightarrow(vi)$] This is an immediate consequence of (ii) and \cite[Theorem
  3.3]{tweedie-mproc}.
  \item [$(i)\Leftrightarrow(vii)$] By \cite[Proposition 2.3]{sandric-periodic}, it suffices to
  prove that $$\inf_{x\in C}\int_0^{\infty}\mathbb{P}^{x}(F_t\in
  B)e^{-t}dt>0$$ holds for every compact set $C\subseteq\R^{d}$ and
  every $B\in\mathcal{B}(\R^{d})$ satisfying $\psi(B)>0$. Let us assume
  that
  this is not the case. Then, there exist a compact set $C\subseteq\R^{d}$, a Borel set
  $B\subseteq\R^{d}$ satisfying
  $\psi(B)>0$ and a sequence $\{x_n\}_{n\in\N}\subseteq C$ with $\lim_{n\longrightarrow\infty}x_n=x_0\in C$, such
  that $$\lim_{n\longrightarrow\infty}\int_0^{\infty}\mathbb{P}^{x_n}(F_t\in
  B)e^{-t}dt=0.$$ Now, by the
  dominated convergence theorem and the strong Feller property, it follows $$\int_0^{\infty}\mathbb{P}^{x_0}(F_t\in
  B)e^{-t}dt=0.$$ But this is in contradiction with the
  $\psi$-irreducibility property of $\process{F}$.
  \item[$(i)\Leftrightarrow(viii)$] This is an immediate consequence of (vii) and
  \cite[Poposition 2.4]{sandric-periodic}.
\end{description}
\end{proof}
The proof of Theorem \ref{tm1.2} is based on the following lemma.
\begin{lemma}\label{l2.2}Let $\process{F}$ be a nice   Feller process  with  symbol $q(x,\xi)$ and let
$\Phi_t(x,\xi):=\mathbb{E}^{x}\left[e^{i\langle\xi,F_t-x\rangle}\right]$
for $t\geq0$ and $x,\xi\in\R^{d}.$ Then, for any $\varepsilon>0$ and
$\xi\in\R^{d}$ there exists $t_0:=t_0(\varepsilon,\xi)>0$, such that
for all $x\in\R^{d}$ and all $t\in[0,t_0]$ we have
$$\rm{Re}\,\Phi_{\it{t}}(\it{x},\xi)\geq\exp\left[-(\rm{2}+\varepsilon)\it{t}\sup_{\it{z}\in\R^{d}}|\it{q}(\it{z},\xi)|\right].$$
\end{lemma}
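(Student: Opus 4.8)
The plan is to obtain the lower bound on $\mathrm{Re}\,\Phi_t(x,\xi)$ via a differential/semigroup argument, controlling the time-derivative of $\Phi_t(x,\xi)$ at small times and then integrating. The starting observation is that for fixed $\xi\in\R^{d}$ the function $e_\xi(y):=e^{i\langle\xi,y\rangle}$, while not in $C_\infty(\R^{d})$, can be handled through the generator: applying the pseudo-differential operator $\mathcal{A}$ to $e_\xi$ (formally, or after cutting off and passing to the limit) yields $\mathcal{A}e_\xi(x)=-q(x,\xi)e_\xi(x)$. Thus, writing $\Phi_t(x,\xi)=e^{-i\langle\xi,x\rangle}\mathbb{E}^{x}[e^{i\langle\xi,F_t\rangle}]=e^{-i\langle\xi,x\rangle}P_t e_\xi(x)$, one expects by Dynkin's formula / Kolmogorov's backward equation that
\[
\frac{\partial}{\partial t}\big(P_t e_\xi\big)(x) = P_t(\mathcal{A}e_\xi)(x) = -P_t\big(q(\cdot,\xi)e_\xi\big)(x),
\]
at least at $t=0$, giving $\partial_t P_t e_\xi(x)\big|_{t=0}=-q(x,\xi)e_\xi(x)$. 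The first step, therefore, is to justify this derivative rigorously despite $e_\xi\notin\mathcal{D}_{\mathcal{A}}$; I would do this by approximating $e_\xi$ with functions in $C_c^\infty(\R^{d})$ that agree with $e_\xi$ on a large ball, using condition (\textbf{C1}) together with the growth bound (\textbf{C2}) $\|q(\cdot,\xi)\|_\infty\le c(1+|\xi|^2)$ to control the error from the tails uniformly in $x$, and the conservativeness (\textbf{C3}) to ensure mass is not lost.

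With the derivative identity in hand, the next step is to translate it into the desired exponential bound. Set $g_\xi:=\sup_{z\in\R^{d}}|q(z,\xi)|$, which is finite by (\textbf{C2}). From the integral equation $P_t e_\xi(x)=e_\xi(x)-\int_0^t P_s(q(\cdot,\xi)e_\xi)(x)\,ds$, one estimates $|P_t e_\xi(x)-e_\xi(x)|\le t\,g_\xi$ uniformly in $x$, since $|q(\cdot,\xi)e_\xi|\le g_\xi$ and each $P_s$ is a contraction. More precisely, I would iterate this to compare $\Phi_t(x,\xi)$ with the scalar ODE solution: the real part $\mathrm{Re}\,\Phi_t(x,\xi)$ satisfies, for small $t$, a lower bound of the form $\mathrm{Re}\,\Phi_t(x,\xi)\ge 1-2t\,g_\xi+o(t)$, where the extra factor of $2$ absorbs both the real and imaginary contributions of $q$ (note that $|q(z,\xi)|\le g_\xi$ bounds $\mathrm{Re}\,q$, and the cross-terms coming from $\mathrm{Im}\,q$ interacting with the oscillation of $e_\xi$ contribute at most another factor $g_\xi$). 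Comparing with $\exp[-(2+\varepsilon)t\,g_\xi]=1-(2+\varepsilon)t\,g_\xi+o(t)$ as $t\to0$, the claimed inequality holds once $t$ is small enough that the $o(t)$ remainder is dominated by $\varepsilon t\,g_\xi$; this determines $t_0=t_0(\varepsilon,\xi)$.

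The main obstacle is the \emph{uniformity in $x$} of the small-time estimate, combined with the fact that $e_\xi$ is not an admissible test function for the generator. The freezing-coefficient heuristic $\Phi_t(x,\xi)\approx e^{-tq(x,\xi)}$ mentioned in the introduction suggests the answer but does not by itself give a uniform error control, because the generator's action at the point $x$ mixes in values of $q(z,\xi)$ at nearby points $z$ through the jump kernel $\nu(x,dy)$. My approach to this is to keep all estimates at the level of the contraction semigroup acting on the \emph{single} bounded function $q(\cdot,\xi)e_\xi$ (whose sup-norm is the uniform constant $g_\xi$), rather than trying to localize in $x$; this sidesteps the need for pointwise control of the kernel and yields bounds that are automatically uniform over $x\in\R^{d}$. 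The price is that the constant degrades from the ideal $1$ to $2+\varepsilon$, but that is exactly what the statement allows. Once the uniform first-order expansion is established, the passage to the exponential lower bound and the selection of $t_0$ are routine.
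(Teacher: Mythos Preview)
Your proposal is correct and follows essentially the same route as the paper: establish the integral identity $\Phi_t(x,\xi)=1-\int_0^t P_s\big(q(\cdot,\xi)e^{i\langle\xi,\cdot-x\rangle}\big)(x)\,ds$ (the paper simply cites this from \cite{rene-wang-feller} and \cite{rene-holder}, so you need not re-derive it), bound the real part of the integrand by $\mathrm{Re}\,q(\cdot,\xi)+|\mathrm{Im}\,q(\cdot,\xi)|\le 2\sup_z|q(z,\xi)|$, and compare $1-2t\sup_z|q(z,\xi)|$ with the exponential. No iteration or $o(t)$ remainder is needed---the inequality $\mathrm{Re}\,\Phi_t(x,\xi)\ge 1-2t\sup_z|q(z,\xi)|$ is exact, and one may take $t_0=\ln\!\big(\tfrac{2+\varepsilon}{2}\big)\big/\big((2+\varepsilon)\sup_z|q(z,\xi)|\big)$.
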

\begin{proof} First,
by \cite[Proposition 4.2]{rene-wang-feller} and \cite[proof of Lemma
6.3]{rene-holder}, for all $t\geq0$ and all $x,\xi\in\R^{d}$ we have
$$\Phi_t(x,\xi)=1-\int_0^{t}P_s\left(q(\cdot,\xi)e^{i\langle\xi,
\cdot-x\rangle}\right)(x)ds.$$ Recall that $\process{P}$ denotes the
semigroup of $\process{F}$. Thus,
\begin{align*}\rm{Re}\,\Phi_{\it{t}}(\it{x},\xi)&=1-\int_0^{t}\int_{\R^{d}}\left(\cos\langle\xi,
y-x\rangle\rm{Re}\,\it{q}(y,\xi)-\sin\langle\xi,
y-x\rangle\rm{Im}\,\it{q}(y,\xi)\right)\mathbb{P}^{x}(F_s\in
dy)ds\\&
\geq1-\int_0^{t}\int_{\R^{d}}\left(\rm{Re}\,\it{q}(y,\xi)+|\rm{Im}\,\it{q}(y,\xi)|\right)\mathbb{P}^{x}(F_s\in
dy)ds\\&
\geq\rm{1}-2\it{t}\sup_{\it{z}\in\R^{d}}|q(\it{z},\xi)|.\end{align*}
Finally, for given $\varepsilon>0$ and $\xi\in\R^{d}$, it is easy to
check that for all
$t\in\left[0,\frac{\ln\left(\frac{2+\varepsilon}{2}\right)}{(2+\varepsilon)\sup_{z\in\R^{d}}|q(z,\xi)|}\right]$
we have
$$\rm{Re}\,\Phi_{\it{t}}(\it{x},\xi)\geq\rm{1}-2\it{t}\sup_{z\in\R^{d}}|q(z,\xi)|\geq\exp\left[-(\rm{2}+\varepsilon)\it{t}\sup_{z\in\R^{d}}|q(z,\xi)|\right].$$
\end{proof}      Now, we prove  Theorem
\ref{tm1.2}.
\begin{proof}[Proof of Theorem \ref{tm1.2}]
First, note that, according to Proposition \ref{p2.1} (iv), it
suffices to prove that
$$\mathbb{E}^{0}\left[\int_0^{\infty}1_{\{F_t\in O_0\}}dt\right]=\infty$$
for every open neighborhood $O_0\subseteq\R^{d}$ around the origin.
Let $a>0$ be arbitrary. By the monotone convergence theorem, we have
\begin{align*}\mathbb{E}^{0}\left[\int_0^{\infty}1_{\left\{F_t\in
\left(-a,a\right)^{d}\right\}}dt\right]&=\lim_{\alpha\longrightarrow0}\mathbb{E}^{0}\left[\int_0^{\infty}e^{-\alpha
t}1_{\left\{F_t\in
\left(-a,a\right)^{d}\right\}}dt\right]\\&=\lim_{\alpha\longrightarrow0}\int_0^{\infty}\int_{\R^{d}}e^{-\alpha
t}1_{ \left(-a,a\right)^{d}}(y)\mathbb{P}^{0}(F_t\in
dy)dt,\end{align*} where
$\left(-a,a\right)^{d}:=\left(-a,a\right)\times\ldots\times\left(-a,a\right)$.
Next, let
$$f(x):=\left(1-\frac{|x|}{a}\right)1_{\left(-a,a\right)}(x),$$ for $x\in\R$, and
$$g(x):=f(x_1)\cdots f(x_d),$$ for $x=(x_1,\ldots,x_d)\in\R^{d}$.
Clearly,  we have $$ 1_{ \left(-a,a\right)^{d}}(x)\geq g(x)$$ for
all $x\in\R^{d}$. According to this, we have
\be\label{eq:2.1}\mathbb{E}^{0}\left[\int_0^{\infty}1_{\left\{F_t\in
\left(-a,a\right)^{d}\right\}}dt\right]\geq\liminf_{\alpha\longrightarrow0}\int_0^{\infty}\int_{\R^{d}}e^{-\alpha
t}g(y)\mathbb{P}^{0}(F_t\in dy)dt.\ee Further,
$$f(x)=\frac{1}{\sqrt{a}}1_{\left(-\frac{a}{2},\frac{a}{2}\right)}\ast\frac{1}{\sqrt{a}}1_{\left(-\frac{a}{2},\frac{a}{2}\right)}(x),$$
where $\ast$ denotes the standard convolution operator. Hence, since
$$\mathcal{F}\left(\frac{1}{\sqrt{a}}1_{\left(-\frac{a}{2},\frac{a}{2}\right)}\right)(\xi)=\frac{\sin\left(\frac{a\xi}{2}\right)}{\sqrt{a}\pi
\xi},$$ we have
$$\mathcal{F}(g)(\xi)=\frac{\sin^{2}\left(\frac{a\xi_1}{2}\right)}{a\pi^{2}
\xi_1^{2}}\cdots\frac{\sin^{2}\left(\frac{a\xi_d}{2}\right)}{a\pi^{2}
\xi_d^{2}}.$$ This and (\ref{eq:2.1}) yields
\begin{align*}\mathbb{E}^{0}\left[\int_0^{\infty}1_{\left\{F_t\in
\left(-a,a\right)^{d}\right\}}dt\right]&\geq\liminf_{\alpha
\longrightarrow0}\int_0^{\infty}\int_{\R^{d}}\int_{\R^{d}}e^{-\alpha
t}e^{i\langle\xi,y\rangle}\mathcal{F}(g)(\xi)d\xi
\mathbb{P}^{0}(F_t\in
dy)dt\\&=\liminf_{\alpha\longrightarrow0}\int_0^{\infty}\int_{\R^{d}}e^{-\alpha
t}\Phi_t(0,\xi)\mathcal{F}(g)(\xi)d\xi
dt\\&=\liminf_{\alpha\longrightarrow0}\int_0^{\infty}\int_{\R^{d}}e^{-\alpha
t}\rm{Re}\,\Phi_{\it{t}}(\rm{0},\xi)\mathcal{F}(\it{g})(\xi)d\xi
d\it{t}\\&=\liminf_{\alpha\longrightarrow0}\int_{\R^{d}}\Bigg(\int_0^{t_0(2,\xi)}e^{-\alpha
t}\rm{Re}\,\Phi_{\it{t}}(\rm{0},\xi)d\it{t}\\& \ \ \ \ \ \ \ \ \ \ \
\ \ \ \ \ \  \
+\int_{t_{\rm{0}}(\rm{2},\xi)}^{\infty}e^{-\alpha\it{t}}\rm{Re}\,\Phi_{\it{t}}(\rm{0},\xi)d\it{t}\Bigg)\mathcal{F}(g)(\xi)d\xi
\\&\geq
\liminf_{\alpha\longrightarrow0}\int_{\R^{d}}\frac{1-\exp\left[-\ln2\frac{\alpha+4\sup_{x\in\R^{d}}|q(x,\xi)|}{4\sup_{x\in\R^{d}}|q(x,\xi)|}\right]}{\alpha+4\sup_{x\in\R^{d}}|q(x,\xi)|}\mathcal{F}(g)(\xi)d\xi,
\end{align*}
where in the fourth step
$t_0(2,\xi)=\frac{\ln2}{4\sup_{x\in\R^{d}}|q(x,\xi)|}$ is given in
Lemma \ref{l2.2} and in the final step we applied Lemma \ref{l2.2}
and the assumption that $\rm{Re}\,\Phi_{\it{t}}(\rm{0},\xi)\geq0$
for all $t\geq0$ and all $\xi\in\R^{d}.$ Now, by  Fatou's lemma, we
have
$$\mathbb{E}^{0}\left[\int_0^{\infty}1_{\left\{F_t\in
\left(-a,a\right)^{d}\right\}}dt\right]\geq\frac{1}{8}\int_{\R^{d}}\frac{\mathcal{F}(g)(\xi)}{\sup_{x\in\R^{d}}|q(x,\xi)|}d\xi.$$
Finally, let $r>0$ be such that
$$\int_{\{|\xi|<r\}}\frac{d\xi}{\sup_{x\in\R^{d}}|q(x,\xi)|}=\infty.$$ Then, since
$$\lim_{a\longrightarrow0}\frac{(2\pi)^{2d}}{a^{d}}\mathcal{F}(g)(\xi)=1,$$ for any $c\in(0,1)$, all $|\xi|<r$ and all $a>0$ small enough
we have $$\mathbb{E}^{0}\left[\int_0^{\infty}1_{\left\{F_t\in
\left(-a,a\right)^{d}\right\}}dt\right]\geq\frac{ca^{d}}{8(2\pi)^{2d}}\int_{\{|\xi|<r\}}\frac{d\xi}{\sup_{x\in\R^{d}}|q(x,\xi)|}=\infty,$$
which completes the proof.
\end{proof}
As a consequence, we also get the following Chung-Fuchs type
conditions.
\begin{corollary}Let  $\process{F}$ be a nice Feller process  with  symbol $q(x,\xi)$ satisfying $|\rm{Im}\,\it{q}(x,\xi)|\leq c\,\rm{Re}\,\it{q}(x,\xi)$ for some
$c\geq0$ and all $x,\xi\in\R^{d}$. Then, \begin{itemize}
                                                                                           \item[(i)]
 the condition in (\ref{eq:1.3}) is equivalent with
$$\int_{\{|\xi|<r\}}\frac{d\xi}{\sup_{x\in\R^{d}}\rm{Re}\,\it{q}(\it{x},\xi)}=\infty\quad\textrm{for
some}\ r>0.$$
                                                                                          \item [(ii)]    the condition in (\ref{eq:1.4}) is equivalent with
$$\int_{\{|\xi|<r\}}\frac{d\xi}{\inf_{x\in\R^{d}}|p(x,\xi)|}<\infty\quad\textrm{for
some}\ r>0.$$
\end{itemize}
\end{corollary}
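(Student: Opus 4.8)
The plan is to reduce both equivalences to a single pointwise comparison between $|q(x,\xi)|$ and $\mathrm{Re}\,q(x,\xi)$ that comes straight out of the hypothesis $|\mathrm{Im}\,q(x,\xi)|\leq c\,\mathrm{Re}\,q(x,\xi)$; everything after that is bookkeeping with suprema, infima and integrals. First I would record that, since $q(x,\cdot)$ is negative definite, $\mathrm{Re}\,q(x,\xi)\geq0$, so for all $x,\xi\in\R^{d}$
$$\mathrm{Re}\,q(x,\xi)\leq|q(x,\xi)|=\sqrt{(\mathrm{Re}\,q(x,\xi))^{2}+(\mathrm{Im}\,q(x,\xi))^{2}}\leq\sqrt{1+c^{2}}\,\mathrm{Re}\,q(x,\xi).$$
Thus $|q|$ and $\mathrm{Re}\,q$ are comparable at every point up to the fixed constant $\sqrt{1+c^{2}}$, and the two integrands in each part differ only through this comparison.

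For (i) I would take the supremum over $x$ in the display above. The left inequality gives $\sup_{x}\mathrm{Re}\,q(x,\xi)\leq\sup_{x}|q(x,\xi)|$, while the right inequality, applied for each fixed $x$ and then maximised, gives $\sup_{x}|q(x,\xi)|\leq\sqrt{1+c^{2}}\,\sup_{x}\mathrm{Re}\,q(x,\xi)$. Inverting and integrating over $\{|\xi|<r\}$ sandwiches the integral in $(\ref{eq:1.3})$ between $(1+c^{2})^{-1/2}$ times the integral $\int_{\{|\xi|<r\}}d\xi/\sup_{x}\mathrm{Re}\,q(x,\xi)$ and that integral itself, so one is infinite exactly when the other is.

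For (ii) (where I read $p$ as the symbol $q$) the same idea applies to the infimum, but the upper bound now needs a small argument since one cannot push $\inf_{x}$ through a pointwise inequality for free. The lower bound $\inf_{x}\mathrm{Re}\,q(x,\xi)\leq\inf_{x}|q(x,\xi)|$ is immediate from $\mathrm{Re}\,q\leq|q|$. For the reverse, given $\varepsilon>0$ I would choose $x_{0}$ with $\mathrm{Re}\,q(x_{0},\xi)<\inf_{x}\mathrm{Re}\,q(x,\xi)+\varepsilon$ and use the pointwise bound to get $\inf_{x}|q(x,\xi)|\leq|q(x_{0},\xi)|\leq\sqrt{1+c^{2}}\,(\inf_{x}\mathrm{Re}\,q(x,\xi)+\varepsilon)$; letting $\varepsilon\downarrow0$ yields $\inf_{x}|q(x,\xi)|\leq\sqrt{1+c^{2}}\,\inf_{x}\mathrm{Re}\,q(x,\xi)$. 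Inverting and integrating then sandwiches $\int_{\{|\xi|<r\}}d\xi/\inf_{x}|q(x,\xi)|$ against the integral in $(\ref{eq:1.4})$ up to the constant $\sqrt{1+c^{2}}$, so finiteness of one forces finiteness of the other.

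The only genuine subtlety, and hence the step I would treat most carefully, is this infimum argument in (ii): the supremum case is symmetric because $\sup$ respects both sides of the comparison, but for the infimum the upper estimate must be obtained through the $\varepsilon$-approximation of $\inf_{x}\mathrm{Re}\,q(x,\xi)$ above. Apart from that, the whole corollary is the elementary fact that multiplying the integrand by a fixed positive constant changes neither the convergence nor the divergence of the integrals over $\{|\xi|<r\}$.
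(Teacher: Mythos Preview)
Your proof is correct and follows exactly the same route as the paper: both derive the pointwise comparison $\mathrm{Re}\,q(x,\xi)\leq|q(x,\xi)|\leq\sqrt{1+c^{2}}\,\mathrm{Re}\,q(x,\xi)$ and then pass to $\sup_{x}$ or $\inf_{x}$ before integrating. One small remark: your $\varepsilon$-approximation in part (ii) is unnecessary, since taking the infimum over $x$ does preserve a pointwise inequality (if $f(x)\leq g(x)$ for all $x$ then $\inf_{x}f\leq\inf_{x}g$), so $\inf_{x}|q(x,\xi)|\leq\sqrt{1+c^{2}}\,\inf_{x}\mathrm{Re}\,q(x,\xi)$ follows immediately just as in the supremum case.
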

\begin{proof}
The desired results easily follow from the following inequalities
$$\frac{\bar{c}}{\rm{Re}\,\it{q}(x,\xi)}\leq\frac{1}{\sqrt{(\rm{Re}\,\it{q}(x,\xi))^{\rm{2}}+(\rm{Im}\,\it{q}(x,\xi))^{\rm{2}}}}=\frac{1}{|q(x,\xi)|}\leq\frac{1}{\rm{Re}\,\it{q}(x,\xi)},$$
where $\bar{c}=\frac{1}{\sqrt{1+c^{2}}}.$
\end{proof}

 In the following proposition we discuss the
dependence of the conditions in (\ref{eq:1.3}) and (\ref{eq:1.4}) on
$r>0$. First, note that if the condition in (\ref{eq:1.4}) holds for
some $r_0>0$, then it  also holds for all $0<r\leq r_0$. In
addition,
 if we assume that \be\label{eq:1}\inf_{r_0\leq|\xi|\leq
r}\inf_{x\in\R^{d}}\rm{Re}\,\it{q}(x,\xi)>\rm{0}\ee holds for all
$0<r_0<r$, then the condition in (\ref{eq:1.4}) does not depend on
$r>0$. In particular, (\ref{eq:1}) is satisfied if
$$\inf_{|\xi|=1}\inf_{x\in\R^{d}}\left(\langle\xi,c(x)\xi\rangle+\int_{\{|y|\leq\frac{1}{r}\}}\langle\xi,y\rangle^{2}\nu(x,dy)\right)>0$$
holds for all $r>r_0$. Indeed, let $r>r_0$ be arbitrary. Then, we
have
\begin{align*}\inf_{r_0\leq|\xi|\leq
r}\inf_{x\in\R^{d}}\rm{Re}\,\it{q}(x,\xi)&=\inf_{r_0\leq|\xi|\leq
r}\inf_{x\in\R^{d}}\left(\frac{1}{2}\langle\xi,c(x)\xi\rangle+\int_{\R^{d}}(1-\cos\langle\xi,y\rangle)\nu(x,dy)\right)\\
&\geq\inf_{r_0\leq|\xi|\leq
r}\inf_{x\in\R^{d}}\left(\frac{1}{2}\langle\xi,c(x)\xi\rangle+\int_{\{|\xi||y|\leq1\}}(1-\cos\langle\xi,y\rangle)\nu(x,dy)\right)\\
&\geq\frac{1}{\pi}\inf_{r_0\leq|\xi|\leq r}\inf_{x\in\R^{d}}\left(\langle\xi,c(x)\xi\rangle+\int_{\{|\xi||y|\leq1\}}\langle\xi,y\rangle^{2}\nu(x,dy)\right)\\
&\geq\frac{r^{2}_0}{\pi}\inf_{r_0\leq|\xi|\leq
r}\inf_{x\in\R^{d}}\left(\left\langle\frac{\xi}{|\xi|},\frac{c(x)}{|\xi|}\xi\right\rangle+\int_{\left\{|y|\leq\frac{1}{r}\right\}}\left\langle\frac{\xi}{|\xi|},y\right\rangle^{2}\nu(x,dy)\right)\\
&\geq\frac{r_0^{2}}{\pi}\inf_{|\xi|=1}\inf_{x\in\R^{d}}\left(\langle\xi,c(x)\xi\rangle+\int_{\{|y|\leq\frac{1}{r}\}}\langle\xi,y\rangle^{2}\nu(x,dy)\right),\end{align*}
where in the third step we employed the fact that $1-\cos
y\geq\frac{1}{\pi}y^{2}$ for all $|y|\leq\frac{\pi}{2}.$
\begin{proposition}\label{p2.4}Let $\process{F}$ be a nice Feller process with
 symbol $q(x,\xi).$
 If the functions
$\xi\longmapsto\sup_{x\in\R^{d}}|q(x,\xi)|$ and
$\xi\longmapsto\inf_{x\in\R^{d}}\rm{Re}\,\it{q}(x,\xi)$ are radial
and the function
$\xi\longmapsto\inf_{x\in\R^{d}}\sqrt{\rm{Re}\,\it{q}(x,\xi)}$ is
subadditive, then the conditions in (\ref{eq:1.3}) and
(\ref{eq:1.4}) do not depend on $r>0$.
\end{proposition}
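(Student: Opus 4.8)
The plan is to show that under the stated hypotheses, each of the two integral conditions either holds for every $r>0$ or fails for every $r>0$; since convergence/divergence on $\{|\xi|<r\}$ is automatically monotone in $r$, it suffices to rule out the ``intermediate'' situation where the integral is finite for small $r$ but infinite for large $r$. Concretely, for the transience condition (\ref{eq:1.4}) I would invoke the discussion immediately preceding the proposition: if the integral converges for some $r_0>0$, it converges for all $0<r\le r_0$, and the reverse direction will follow once we verify the uniform lower bound (\ref{eq:1}), namely $\inf_{r_0\le|\xi|\le r}\inf_{x\in\R^d}\rm{Re}\,\it{q}(x,\xi)>0$ for all $0<r_0<r$. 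Given (\ref{eq:1}), the annulus $\{r_0\le|\xi|\le r\}$ contributes only a finite amount to $\int\bigl(\inf_x\rm{Re}\,\it{q}\bigr)^{-1}d\xi$, so finiteness on $\{|\xi|<r_0\}$ upgrades to finiteness on $\{|\xi|<r\}$, and the condition is seen to be independent of $r$.

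The crux is therefore to deduce the strict positivity in (\ref{eq:1}) from the hypotheses that $\xi\mapsto\inf_{x}\rm{Re}\,\it{q}(x,\xi)$ is radial and $\xi\mapsto\inf_{x}\sqrt{\rm{Re}\,\it{q}(x,\xi)}$ is subadditive. Write $h(\xi):=\inf_{x\in\R^d}\sqrt{\rm{Re}\,\it{q}(x,\xi)}$, so that $h$ is nonnegative, radial, and subadditive; I would first argue that $h$ vanishes only at the origin. Indeed, if $h(\xi_0)=0$ for some $\xi_0\ne 0$, radiality forces $h$ to vanish on the entire sphere $\{|\xi|=|\xi_0|\}$, and then subadditivity, $h(\xi+\eta)\le h(\xi)+h(\eta)$, propagates the vanishing: any $\xi$ with $|\xi|$ a multiple of $|\xi_0|$ is a sum of vectors of length $|\xi_0|$, and more generally one fills out a dense set of directions and radii, forcing $h\equiv 0$ on a neighborhood of any annulus and hence contradicting $\rm{Re}\,\it{q}\not\equiv 0$. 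Thus $h(\xi)>0$ for all $\xi\ne0$.

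It remains to pass from pointwise positivity to the uniform bound over the compact annulus $\{r_0\le|\xi|\le r\}$. Here radiality reduces the problem to the one-dimensional function $\rho\mapsto h(\rho e_1)$ on $[r_0,r]$; subadditivity of $h$ gives $h\bigl((\rho+s)e_1\bigr)\le h(\rho e_1)+h(se_1)$, from which one derives that $\rho\mapsto h(\rho e_1)$ cannot dip to zero in the interior of $[r_0,r]$ without forcing $h$ to vanish near the origin (using $h(\rho e_1)\ge h((\rho+s)e_1)-h(se_1)$ and letting the increments shrink), so $\inf_{r_0\le\rho\le r}h(\rho e_1)>0$. Squaring recovers (\ref{eq:1}), and the transience half is complete. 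The recurrence condition (\ref{eq:1.3}) is handled by the same scheme applied to $\xi\mapsto\sup_{x}|q(x,\xi)|$: its radiality plus the sector-type control $\rm{Re}\,\it{q}\le|q|\le c\,\rm{Re}\,\it{q}$ transfer the lower bound on $\inf_x\rm{Re}\,\it{q}$ into a finite upper bound on $\sup_x|q|$ over each annulus, so divergence of $\int_{\{|\xi|<r_0\}}\bigl(\sup_x|q|\bigr)^{-1}d\xi$ is unaffected by enlarging $r_0$ to $r$. The main obstacle I anticipate is the subadditivity-to-uniform-positivity step: extracting a genuine positive lower bound over the annulus, rather than mere nonvanishing, is where the combination of radiality and subadditivity must be used carefully, and it is the only place where a soft compactness argument alone does not suffice.
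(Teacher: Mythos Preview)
Your plan has two genuine gaps, and both stem from the same missing ingredient.

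First, your treatment of condition (\ref{eq:1.3}) invokes a ``sector-type control $\mathrm{Re}\,q\le|q|\le c\,\mathrm{Re}\,q$'' that is \emph{not} among the hypotheses of the proposition. You then try to bound $\sup_{x}|q(x,\xi)|$ from below via $\inf_{x}\mathrm{Re}\,q(x,\xi)$, but nothing prevents $\inf_{x}\mathrm{Re}\,q(x,\xi)\equiv 0$ while $\sup_{x}|q(x,\xi)|$ is perfectly nontrivial (take $q(x,\xi)=f(x)|\xi|^{2}$ with $\inf_x f=0$, $\sup_x f>0$). The paper avoids this entirely: for each fixed $x$, the map $\xi\mapsto\sqrt{|q(x,\xi)|}$ is \emph{automatically} subadditive, because $q(x,\cdot)$ is continuous negative definite (\cite[Lemma~3.6.21]{jacobI}). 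Taking the supremum over $x$ preserves subadditivity, so $\sup_x\sqrt{|q(x,\cdot)|}$ is subadditive without any extra hypothesis, and the recurrence half can be handled symmetrically with the transience half.

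Second, your claim that $h(\xi):=\inf_x\sqrt{\mathrm{Re}\,q(x,\xi)}$ vanishes only at the origin is not provable from the hypotheses. In one dimension, radiality plus subadditivity plus $h(\xi_0)=0$ only yield \emph{periodicity} of $h$ with period $\xi_0$, not $h\equiv 0$ (think of $h(\xi)=|\sin\xi|$, which is even and subadditive). And your proposed contradiction --- that $h\equiv 0$ forces $\mathrm{Re}\,q\equiv 0$ --- is false: $\inf_x\mathrm{Re}\,q(x,\xi)=0$ for all $\xi$ is perfectly compatible with $\mathrm{Re}\,q\not\equiv 0$.

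The paper's route is different and cleaner. It first proves \emph{continuity} of $\xi\mapsto\sup_x|q(x,\xi)|$ and $\xi\mapsto\inf_x\mathrm{Re}\,q(x,\xi)$ (again via the automatic subadditivity of $\sqrt{|q(x,\cdot)|}$ and $\sqrt{\mathrm{Re}\,q(x,\cdot)}$, combined with $\sup_x|q(x,\eta)|\to 0$ as $\eta\to 0$). Then it argues by contradiction: if the integral is finite on $\{|\xi|<r_0\}$ but infinite on $\{|\xi|<r\}$ for $r>r_0$, continuity and compactness of the annulus force a zero of the relevant function at some $\xi_0$ with $|\xi_0|=r_0$. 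Subadditivity (automatic for $\sup_x|q|$, assumed for $\inf_x\mathrm{Re}\,q$) together with the vanishing at $\xi_0$ makes the function \emph{periodic} with period $\xi_0$. Periodicity then implies that the integral over $\{|\xi|<r\}$ is bounded by a finite multiple of the integral over $\{|\xi|<r_0\}$, which is finite --- a contradiction. Note that the paper never claims the function is nonvanishing away from the origin; periodicity alone does the job. Your anticipated ``main obstacle'' (passing from pointwise to uniform positivity) dissolves once continuity is in hand, but continuity is exactly what you did not establish.
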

\begin{proof}
 First, we prove that the functions
$\xi\longmapsto\sup_{x\in\R^{d}}|q(x,\xi)|$ and
$\xi\longmapsto\inf_{x\in\R^{d}}\rm{Re}\,\it{q}(x,\xi)$ are
continuous. Let $\xi,\xi_0\in\R^{d}$ be arbitrary. By \cite[Lemma
3.6.21]{jacobI}, we have
\begin{align*}\left|\sqrt{\sup_{x\in\R^{d}}|q(x,\xi)|}-\sqrt{\sup_{x\in\R^{d}}|q(x,\xi_0)|}\right|&\leq\sup_{x\in\R^{d}}\left|\sqrt{|q(x,\xi)|}-\sqrt{|q(x,\xi_0)|}\right|\\&\leq
\sup_{x\in\R^{d}}\sqrt{|q(x,\xi-\xi_0)|}\end{align*} and
\begin{align*}\left|\sqrt{\inf_{x\in\R^{d}}\rm{Re}\,\it{q}(x,\xi)}-\sqrt{\inf_{x\in\R^{d}}\rm{Re}\,\it{q}(x,\xi_{\rm{0}})}\right|&\leq\sup_{x\in\R^{d}}\left|\sqrt{\rm{Re}\,\it{q}(x,\xi)}-\sqrt{\rm{Re}\,\it{q}(x,\xi_{\rm{0}})}\right|\\&\leq
\sup_{x\in\R^{d}}\sqrt{\rm{Re}\,\it{q}(x,\xi-\xi_{\rm{0}})}\\&\leq\sup_{x\in\R^{d}}\sqrt{|\it{q}(x,\xi-\xi_{\rm{0}})|}.\end{align*}
Now, by letting $\xi\longrightarrow\xi_0$, the claim follows from
\cite[Theorem 4.4]{rene-conserv}.

Let us first consider the recurrence case. Let $r_0>0$ be such that
$$\int_{\{|\xi|<r_0\}}\frac{d\xi}{\sup_{x\in\R^{d}}|q(x,\xi)|}<\infty$$
and
$$\int_{\{|\xi|<r\}}\frac{d\xi}{\sup_{x\in\R^{d}}|q(x,\xi)|}=\infty$$
for all $r>r_0$. In particular, we have
$$\int_{\{r_0\leq|\xi|\leq
r\}}\frac{d\xi}{\sup_{x\in\R^{d}}|q(x,\xi)|}=\infty$$ for all
$r>r_0$. Let $r>r_0$ be arbitrary. Then, by compactness, there
exists a sequence $\{\xi_n\}_{n\in\N}\subseteq\{\xi:r_0\leq|\xi|\leq
r\}$, such that $\xi_n\longrightarrow\xi_r\in\{\xi:r_0\leq|\xi|\leq
r\}$ and
$\lim_{n\longrightarrow\infty}\sup_{x\in\R^{d}}|q(x,\xi_n)|=0$. In
particular, by the continuity, $\sup_{x\in\R^{d}}|q(x,\xi_r)|=0$.
Since this is true for every $r>r_0$, we have
$\lim_{r\longrightarrow r_0}|\xi_r|= r_0$. Thus, by  the continuity
and  radial property, for arbitrary $\xi_0\in\R^{d}$, $|\xi_0|=r_0$,
we have $\sup_{x\in\R^{d}}|q(x,\xi_0)|=0$. Now, since
$\xi\longmapsto\sqrt{|q(x,\xi)|}$ is subadditive for all
$x\in\R^{d}$ (see \cite[Lemma 3.6.21]{jacobI}), by the radial
property, for arbitrary $\xi\in\R^{d}$, we have
\begin{align*}\sup_{x\in\R^{d}}\sqrt{\left|q\left(x,\xi+\xi_0\right)\right|}&\leq\sup_{x\in\R^{d}}\left(\sqrt{\left|q\left(x,\xi\right)\right|}+\sqrt{\left|q\left(x,\xi_0\right)\right|}\right)\\&\leq\sup_{x\in\R^{d}}\sqrt{\left|q\left(x,\xi\right)\right|}+\sup_{x\in\R^{d}}\sqrt{\left|q\left(x,\xi_0\right)\right|}\\&=\sup_{x\in\R^{d}}\sqrt{\left|q\left(x,\xi\right)\right|}\end{align*}
and
\begin{align*}\sup_{x\in\R^{d}}\sqrt{\left|q\left(x,\xi\right)\right|}&=\sup_{x\in\R^{d}}\sqrt{\left|q\left(x,\xi+\xi_0-\xi_0\right)\right|}\\&\leq\sup_{x\in\R^{d}}\left(\sqrt{\left|q\left(x,\xi+\xi_0\right)\right|}+\sqrt{\left|q\left(x,-\xi_0\right)\right|}\right)\\&\leq\sup_{x\in\R^{d}}\sqrt{\left|q\left(x,\xi+\xi_0\right)\right|}+\sup_{x\in\R^{d}}\sqrt{\left|q\left(x,-\xi_0\right)\right|}\\&=\sup_{x\in\R^{d}}\sqrt{\left|q\left(x,\xi+\xi_0\right)\right|},\end{align*}
that is,  the function $\xi\longmapsto\sup_{x\in\R^{d}}|q(x,\xi)|$
is periodic with period $\xi_0$. Thus, we conclude that if
(\ref{eq:1.3}) holds for some $r>0$, then it holds for all $r>0$.

In the transience case, by completely the same arguments as above,
we have that
\\$\inf_{x\in\R^{d}}\rm{Re}\,\it{q}(x,\xi_{\rm{0}})=\rm{0}$ for all
$\xi_0\in\R^{d}$, $|\xi_0|=r_0$, where $r_0> 0$ is such that
$$\int_{\{|\xi|<r_0\}}\frac{d\xi}{\inf_{x\in\R^{d}}\rm{Re}\,\it{q}(x,\xi)}<\infty$$
and
$$\int_{\{|\xi|<r\}}\frac{d\xi}{\inf_{x\in\R^{d}}\rm{Re}\,\it{q}(x,\xi)}=\infty$$
for all $r>r_0$. Further, since we assumed that the function
$\xi\longmapsto\inf_{x\in\R^{d}}\sqrt{\rm{Re}\,\it{q}(x,\xi)}$ is
subadditive, analogously as above, we conclude that the function
$\xi\longmapsto\inf_{x\in\R^{d}}\rm{Re}\,\it{q}(x,\xi)$ is periodic
with period $\xi_0$. Thus, if (\ref{eq:1.4}) holds for some $r>0$,
then it holds for all $r>0$. This completes the proof of Proposition
\ref{p2.4}.
\end{proof}
Let us remark that, in the one-dimensional case, a sufficient
condition for the  subadditivity  of the function
$\xi\longmapsto\inf_{x\in\R}\sqrt{\rm{Re}\,\it{q}\left(x,\xi\right)}$
is the concavity of  the function
$|\xi|\longmapsto\inf_{x\in\R}\sqrt{\rm{Re}\,\it{q}\left(x,\xi\right)}.$
Indeed, let $\xi,\eta>0$  be arbitrary. Then, we have
\begin{align*}\inf_{x\in\R}\sqrt{\rm{Re}\,\it{q}(x,\xi)}
&=\inf_{x\in\R}\sqrt{\rm{Re}\,\it{q}\left(x,\frac{\eta}{\xi+\eta}\rm{0}+\frac{\xi}{\xi+\eta}(\xi+\eta)\right)}\\
&\geq\frac{\xi}{\xi+\eta}\inf_{x\in\R}\sqrt{\rm{Re}\,\it{q}\left(x,\xi+\eta\right)}\end{align*}
and similarly
$$\inf_{x\in\R}\sqrt{\rm{Re}\,\it{q}(x,\eta)}\geq\frac{\eta}{\xi+\eta}\inf_{x\in\R}\sqrt{\rm{Re}\,\it{q}\left(x,\xi+\eta\right)}.$$
Thus,
\begin{align*}\inf_{x\in\R}\sqrt{\rm{Re}\,\it{q}\left(x,\xi+\eta\right)}\leq\inf_{x\in\R}\sqrt{\rm{Re}\,\it{q}(x,\xi)}+\inf_{x\in\R}\sqrt{\it{q}(x,\eta)},
\end{align*} that is, the function
$[0,\infty)\ni\xi\longmapsto\inf_{x\in\R^{d}}\sqrt{\rm{Re}\,\it{q}(x,\xi)}$
is subadditive. Finally, since every nonnegative and concave
function is necessarily nondecreasing, the claim follows.

As we commented in the first section, in the case when a symbol
$q(x,\xi)$ does not depend on the variable $x\in\R^{d}$,
$\process{F}$ becomes a L\'evy process and, by the L\'evy-Khintchine
formula, we have
$$q(\xi):=q(x,\xi)=-\frac{\log\mathbb{E}^{x}\left[e^{i\langle\xi-x,F_t\rangle}\right]}{t}=-\frac{\log\mathbb{E}^{0}\left[e^{i\langle\xi,F_t\rangle}\right]}{t}$$ and $\Phi_{\it{t}}(x,\xi)=e^{-tq(\xi)}$ for all $t\geq0$ and all $x,\xi\in\R^{d}$ (see \cite[Theorems 7.10 and 8.1]{sato-book}). Further, note that every
L\'evy process satisfies conditions (\textbf{C1})-(\textbf{C3}) (see
\cite[Theorem 31.5]{sato-book}) and if $\process{F}$ is a symmetric
L\'evy process,  then
$\rm{Re}\,\Phi_{\it{t}}(\it{x},\xi)=\rm{\Phi}_{\it{t}}(\it{x},\xi)=e^{-tq(\xi)}\geq\rm{0}$.
Thus, under condition (\textbf{C4}), we get the following well-known
Chung-Fuchs conditions (see \cite[Theorem 37.5]{sato-book}).
\begin{corollary}
Let $\process{F}$ be a  L\'evy process with  symbol $q(\xi)$ which
satisfies condition (\textbf{C4}). If $\process{F}$ is symmetric and
if
$$\int_{\{|\xi|<r\}}\frac{d\xi}{q(\xi)}=\infty\quad\textrm{for
some}\ r>0,$$ then $\process{F}$ is recurrent. If  $q(\xi)$
satisfies the sector condition in (\ref{eq:1.1}) and
$$\int_{\{|\xi|<r\}}\frac{d\xi}{\rm{Re}\,\it{q(\xi)}}<\infty\quad\textrm{for
some}\ r>0,$$ then $\process{F}$ is transient.
\end{corollary}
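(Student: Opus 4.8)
The plan is to obtain both assertions as immediate specializations of Theorems \ref{tm1.2} and \ref{tm1.3} to the case of a spatially homogeneous symbol, so that the entire proof is a matter of reducing the supremum/infimum over the state variable to a single value. First I would record that $\process{F}$ genuinely qualifies as a nice Feller process: every L\'evy process satisfies conditions (\textbf{C1})--(\textbf{C3}) by \cite[Theorem 31.5]{sato-book}, and (\textbf{C4}) is imposed in the hypothesis, so both theorems are applicable. It then remains only to check that the Feller-type integral conditions in (\ref{eq:1.3}) and (\ref{eq:1.4}) collapse to the stated L\'evy conditions, and that the positivity hypothesis of Theorem \ref{tm1.2} is automatic under symmetry.

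For the recurrence part, the key observation is that since $q(x,\xi)=q(\xi)$ does not depend on $x$, we have $\sup_{x\in\R^{d}}|q(x,\xi)|=|q(\xi)|$. When $\process{F}$ is symmetric, $q(\xi)=\mathrm{Re}\,q(\xi)\geq 0$, so $|q(\xi)|=q(\xi)$ and condition (\ref{eq:1.3}) reads precisely $\int_{\{|\xi|<r\}}d\xi/q(\xi)=\infty$, which is the stated hypothesis. To verify the remaining hypothesis $\mathrm{Re}\,\mathbb{E}^{0}[e^{i\langle\xi,F_t\rangle}]\geq 0$, I would invoke the L\'evy--Khintchine formula (\cite[Theorems 7.10 and 8.1]{sato-book}), which gives $\Phi_t(0,\xi)=\mathbb{E}^{0}[e^{i\langle\xi,F_t\rangle}]=e^{-tq(\xi)}$; for symmetric $q$ this is a real number in $(0,1]$, so its real part is nonnegative. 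With both hypotheses of Theorem \ref{tm1.2} verified, recurrence follows.

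For the transience part the argument is even shorter, and no symmetry is needed: the sector condition (\ref{eq:1.1}) is assumed outright. Again using $q(x,\xi)=q(\xi)$, we have $\inf_{x\in\R^{d}}\mathrm{Re}\,q(x,\xi)=\mathrm{Re}\,q(\xi)$, so condition (\ref{eq:1.4}) is exactly the assumed integral $\int_{\{|\xi|<r\}}d\xi/\mathrm{Re}\,q(\xi)<\infty$, and Theorem \ref{tm1.3} yields transience.

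There is essentially no analytic obstacle here: the whole content is the reduction of the supremum and infimum over the state variable to the single value $q(\xi)$, which is immediate for a constant-in-$x$ symbol, together with the identification $\Phi_t(0,\xi)=e^{-tq(\xi)}$ from the L\'evy--Khintchine representation. The one point deserving a word of care is \emph{why symmetry enters the recurrence statement but not the transience statement}: symmetry is precisely what simultaneously forces $|q(\xi)|=q(\xi)$, aligning (\ref{eq:1.3}) with the stated condition, and guarantees the nonnegativity of $\mathrm{Re}\,\Phi_t(0,\xi)$, which is exactly the extra hypothesis that Theorem \ref{tm1.2} requires and Theorem \ref{tm1.3} does not.
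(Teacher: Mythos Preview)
Your proposal is correct and mirrors the paper's own argument exactly: the corollary is stated immediately after the paper records that every L\'evy process satisfies (\textbf{C1})--(\textbf{C3}) and that for a symmetric L\'evy process $\Phi_t(x,\xi)=e^{-tq(\xi)}\geq 0$, so the result is just Theorems \ref{tm1.2} and \ref{tm1.3} with the $x$-dependence removed. Your write-up is, if anything, slightly more explicit than the paper's single-sentence justification.
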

Let us remark that in general, because of stationary and independent
increments, the notion of irreducibility, and therefore condition
(\textbf{C4}), is not needed to derive the recurrence and transience
dichotomy of L\'evy processes (see \cite[Section 7]{sato-book}).

In the following theorem we give sufficient conditions for
$\lambda$-irreducibility of  Feller processes in terms of the
symbol.
\begin{theorem}\label{tm2.6}Let $\process{F}$ be a  Feller process  which satisfies conditions \textbf{(C1)}-\textbf{(C3)} and such that its corresponding symbol $q(x,\xi)$ satisfies the sector condition in (\ref{eq:1.1}) and
\be\label{eq:2.2}\int_{\R^{d}}\exp\left[-t\inf_{x\in\R^{d}}\rm{Re}\,\it{q}(x,\xi)\right]d\xi<\infty\ee
for all $t\geq0$. Then, $\process{F}$ possesses a density function
$p(t,x,y)$,  $t>0$ and $x,y\in\R^{d}.$ In addition, if
$\Phi_t(x,\xi)$ is real-valued and if there exists a function
$\underline{\Phi}_t(\xi)$ such that
$0<\underline{\Phi}_t(\xi)\leq\Phi_t(x,\xi)$ and
$\underline{\Phi}_{s+t}(\xi)\leq\underline{\Phi}_t(\xi)$  for all
$s,t\geq0$ and all $x,\xi\in\R^{d}$, then  $\process{F}$ is
$\lambda$-irreducible.
\end{theorem}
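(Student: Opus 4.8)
The plan is to establish two separate conclusions — existence of a transition density, and $\lambda$-irreducibility — using the Fourier-analytic control over $\Phi_t(x,\xi)$ that condition \eqref{eq:2.2} provides. For the density, the key idea is that the integrability hypothesis \eqref{eq:2.2}, combined with the upper bound on the characteristic function from \cite[Theorem 2.7]{rene-wang-feller}, namely $|\Phi_t(x,\xi)|\leq\exp[-\frac{t}{16}\inf_{z\in\R^{d}}\mathrm{Re}\,q(z,2\xi)]$, forces $\xi\longmapsto\Phi_t(x,\xi)$ to be in $L^1(\R^d)$ uniformly in $x$ for each fixed $t>0$. I would first observe that $\mathbb{E}^{x}[e^{i\langle\xi,F_t\rangle}]=e^{i\langle\xi,x\rangle}\Phi_t(x,\xi)$, so the transition kernel $\mathbb{P}^{x}(F_t\in dy)$ has characteristic function bounded in modulus by $\exp[-\frac{t}{16}\inf_{z}\mathrm{Re}\,q(z,2\xi)]$, which is integrable by a change of variables $\xi\mapsto 2\xi$ applied to \eqref{eq:2.2}. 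By the Fourier inversion theorem for measures with integrable characteristic functions, the law of $F_t$ under $\mathbb{P}^x$ then admits a bounded continuous density
\[
p(t,x,y)=(2\pi)^{-d}\int_{\R^{d}}e^{-i\langle\xi,y\rangle}\,\mathbb{E}^{x}\!\left[e^{i\langle\xi,F_t\rangle}\right]d\xi,
\]
and the sector condition \eqref{eq:1.1} is what guarantees $\mathrm{Re}\,q$ controls $|q|$ so that the bound is genuinely effective (it rules out a dominating imaginary/drift part that could otherwise prevent decay).

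For the second part, I would use the explicit inversion formula above together with the lower bound hypothesis $0<\underline{\Phi}_t(\xi)\leq\Phi_t(x,\xi)$. The strategy is to show $p(t,x,y)>0$ for all $t>0$ and all $x,y\in\R^d$; by the remark in the introduction (following condition \textbf{(C4)}), strict positivity of the density immediately yields $\lambda$-irreducibility. Writing the density as
\[
p(t,x,y)=(2\pi)^{-d}\int_{\R^{d}}\cos\langle\xi,y-x\rangle\,\Phi_t(x,\xi)\,d\xi,
\]
which uses that $\Phi_t(x,\xi)$ is assumed real-valued, I would exploit that $\Phi_t(x,\xi)\geq\underline{\Phi}_t(\xi)>0$. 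The natural device is the semigroup/submultiplicativity hypothesis $\underline{\Phi}_{s+t}(\xi)\leq\underline{\Phi}_t(\xi)$: splitting time as $t=t/2+t/2$ and applying Chapman--Kolmogorov, one writes $p(t,x,y)=\int_{\R^d}p(t/2,x,z)p(t/2,z,y)\,dz$, and each half-step density has Fourier transform bounded below by $\underline{\Phi}_{t/2}$, so the convolution structure converts the pointwise lower bound into strict positivity of $p(t,x,y)$ for every $y$.

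The main obstacle I anticipate is precisely this last positivity argument: a mere lower bound $\Phi_t(x,\xi)\geq\underline{\Phi}_t(\xi)>0$ on the characteristic function does not by itself make the inverse Fourier integral positive, since the oscillatory factor $\cos\langle\xi,y-x\rangle$ can create cancellation. The real work is in using the Chapman--Kolmogorov convolution together with the monotonicity $\underline{\Phi}_{s+t}\leq\underline{\Phi}_t$ to dominate $p(t,x,\cdot)$ from below by a strictly positive convolution of nonnegative densities — essentially showing that the ``frozen lower envelope'' $\underline{\Phi}_t$ behaves like the characteristic function of a genuine probability density whose convolution square is everywhere positive. Making this rigorous will require care in justifying the inversion and the interchange of integration, with the integrability from \eqref{eq:2.2} and the sector condition \eqref{eq:1.1} doing the technical heavy lifting to ensure all Fourier integrals converge absolutely.
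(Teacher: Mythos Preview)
Your treatment of the first claim (existence of the density) is essentially the paper's argument: the upper bound from \cite[Theorem~2.7]{rene-wang-feller} combined with \eqref{eq:2.2} gives $\Phi_t(x,\cdot)\in L^1(\R^d)$, and Fourier inversion yields the density.

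For the $\lambda$-irreducibility, however, your plan has a genuine gap that you yourself flag but do not close. You aim to prove $p(t,x,y)>0$ for \emph{every} $y$ by splitting $t=t/2+t/2$, applying Chapman--Kolmogorov, and arguing that the convolution of two nonnegative densities is strictly positive because each factor has Fourier transform bounded below by $\underline{\Phi}_{t/2}$. But a lower bound on the Fourier side does not transfer to a lower bound on the spatial side: $\underline{\Phi}_t$ is not assumed to be positive definite, so it need not be the characteristic function of any measure, and even if it were, the inverse transform of a nonnegative integrable function need not be strictly positive everywhere. The sentence ``essentially showing that the frozen lower envelope $\underline{\Phi}_t$ behaves like the characteristic function of a genuine probability density whose convolution square is everywhere positive'' is a wish, not an argument, and I do not see how to make it one with the hypotheses available.

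The paper sidesteps this obstacle entirely by observing that at the single point $y=0$ the inversion integral has \emph{no oscillatory factor}:
\[
p(t,x,0)=(2\pi)^{-d}\int_{\R^{d}}\Phi_t(x,\xi)\,d\xi\;\geq\;(2\pi)^{-d}\int_{\R^{d}}\underline{\Phi}_{t_0+1}(\xi)\,d\xi\;>\;0,
\]
uniformly for $t\in[t_0,t_0+1]$ and $x\in\R^{d}$ (the monotonicity hypothesis $\underline{\Phi}_{s+t}\leq\underline{\Phi}_t$ is used here to get a single lower bound on the whole time window). Next, the estimate
\[
|p(t,x,0)-p(t,x,y)|\leq(2\pi)^{-d}\int_{\R^{d}}\bigl|1-e^{-i\langle\xi,y\rangle}\bigr|\exp\!\Bigl[-\tfrac{t}{16}\inf_{z}\mathrm{Re}\,q(z,\xi)\Bigr]d\xi
\]
and dominated convergence give continuity of $y\mapsto p(t,x,y)$ at $0$ \emph{uniformly} in $t\geq t_0$ and $x$, so $p(t,x,y)>0$ for all $t\in[t_0,t_0+1]$, all $x$, and all $|y|<\varepsilon$ for some $\varepsilon>0$. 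Only now does Chapman--Kolmogorov enter, and it is used iteratively to enlarge the spatial range: from positivity on $\{|y|<\varepsilon\}$ one gets positivity on $\{|y|<n\varepsilon\}$ for $t\in[nt_0,n(t_0+1)]$. This is enough for $\lambda$-irreducibility (for each $x$ and each $B$ with $\lambda(B)>0$, pick $n$ large enough that $(B-x)\cap\{|y|<n\varepsilon\}$ has positive Lebesgue measure); the paper never claims, and does not need, that $p(t,x,y)>0$ for \emph{all} $t,x,y$ simultaneously. The missing idea in your proposal is exactly this: start at $y=0$ where the lower bound on $\Phi_t$ translates directly into a lower bound on the density, and then propagate by continuity and Chapman--Kolmogorov rather than trying to extract global positivity from a Fourier lower bound.
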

\begin{proof}
To prove the first claim, note that, by \cite[Theorem
2.7]{rene-wang-feller}, $\int_{\R^{d}}|\Phi_t(x,\xi)|d\xi<\infty$
for all $t>0$ and all $x\in\R^{d}$. Thus, the following functions
are well defined
$$p(t,x,y):=(2\pi)^{-d}\int_{\R^{d}}e^{-i\langle\xi,y\rangle}\Phi_t(x,\xi)d\xi$$
and $$\mathbb{P}^{x}(F_t\in B)=\int_{B-x}p(t,x,y)dy$$   for all
$t>0$, all $x,y\in\R^{d}$ and all $B\in\mathcal{B}(\R^{d})$.

To prove the second claim, again by \cite[Theorem
2.7]{rene-wang-feller}, for every $t>0$ we have
\begin{align*}|p(t,x,0)-p(t,x,y)|&=(2\pi)^{-d}\left|\int_{\R^{d}}\left(1-e^{-i\langle\xi,y\rangle}\right)\Phi_t(x,\xi)d\xi\right|\\&\leq
(2\pi)^{-d}\int_{\R^{d}}\left|1-e^{-i\langle\xi,y\rangle}\right|\exp\left[-\frac{t}{16}\inf_{x\in\R^{d}}\rm{Re}\,\it{q}(x,\xi)\right]d\xi.\end{align*}
Thus, by the dominated convergence theorem, for every $t_0>0$ the
continuity of the function $y\longmapsto p(t,x,y)$ at $0$ is
uniformly for all $t\geq t_0$ and all $x\in\R^{d}.$ Further, for
every $t_0>0$,
$$p(t,x,0)=(2\pi)^{-d}\int_{\R^{d}}\Phi_t(x,\xi)d\xi\geq(2\pi)^{-d}\int_{\R^{d}}\underline{\Phi}_{t_0+1}(\xi)d\xi>0$$
uniformly for all $t\in[t_0,t_0+1]$ and all $x\in\R^{d}$. According
to this, there exists $\varepsilon:=\varepsilon(t_0)>0$ such that
$p(t,x,y)>0$ for all $t\in[t_0,t_0+1]$, all $x\in\R^{d}$ and all
$|y|<\varepsilon$. Now, for any $n\in\N$, by the Chapman-Kolmogorov
equation, we have that  $p(t,x,y)>0$ for all $t\in[nt_0,n(t_0+1)]$,
all $x\in\R^{d}$ and all $|y|<n\varepsilon$. Finally, let
$B\in\mathcal{B}(\R^{d})$ such that $\lambda(B)>0$. Then, for  given
$t_0>0$ and $x\in\R^{d}$, there exists $n:=n(t_0,x)\in\N$, such that
$\lambda((B-x)\cap\{|y|<n\varepsilon\})>0$, where
$\varepsilon:=\varepsilon(t_0)>0$ is as above. Thus,
$$\mathbb{P}^{x}(F_t\in B)=\int_{B-x}p(t,x,y)dy\geq\int_{(B-x)\cap\{|y|<n\varepsilon\}}p(t,x,y)dy>0,$$
for all $t\in[nt_0,n(t_0+1)].$
\end{proof}
Note that  the condition in (\ref{eq:2.2}) follows from the
Hartman-Wintner condition in (\ref{eq:1.2}). Also, let us remark
that, in the spirit of Lemma \ref{l2.2}, we conjecture that a
symmetric Feller process $\process{F}$ with  symbol $q(x,\xi)$ which
satisfies conditions \textbf{(C1)}-\textbf{(C3)} also satisfies the
following uniform lower bound
$$\Phi_t(x,\xi)\geq\exp\left[-ct\sup_{z\in\R^{d}}q(z,\xi)\right]$$ for all
$t\geq0$, all $x,\xi\in\R^{d}$ and some constant $c>0.$ In
particular, under  the condition in (\ref{eq:2.2}), this implies the
$\lambda$-irreducibility of $\process{F}$.

 In
the following corollary we derive some conditions for the recurrence
and transience with respect to the dimension of the state space.
\begin{corollary}\label{c2.7}Let $\process{F}$ be a nice Feller process with  symbol $q(x,\xi)$.
\begin{enumerate}
  \item [(i)]If
$$\limsup_{|\xi|\longrightarrow0}\frac{\sup_{x\in\R^{d}}|q(x,\xi)|}{|\xi|^{\alpha}}<c$$
for
 some $\alpha>0$ and some $c<\infty$  and if
$d\leq\alpha$, then  the condition in (\ref{eq:1.3}) holds true.
  \item [(ii)]If
$$\liminf_{|\xi|\longrightarrow0}\frac{\inf_{x\in\R^{d}}\rm{Re}\,\it{q(x,\xi)}}{|\xi|^{\alpha}}>c$$
for
 some $\alpha>0$ and some $c>0$ and if
$d>\alpha$,
  then  the condition in (\ref{eq:1.4}) holds true.
\end{enumerate}
\end{corollary}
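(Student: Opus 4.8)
The plan is to reduce Corollary \ref{c2.7} to a direct convergence/divergence analysis of the integrals appearing in the Chung-Fuchs type conditions (\ref{eq:1.3}) and (\ref{eq:1.4}), using the pointwise growth hypotheses on the symbol near $\xi=0$ to dominate the integrands by explicit radial power functions. Both parts are then settled by the elementary fact that, in $\R^{d}$, the radial integral $\int_{\{|\xi|<r\}}|\xi|^{-\beta}d\xi$ converges if and only if $\beta<d$; passing to polar coordinates this reduces to $\int_0^{r}\rho^{d-1-\beta}d\rho$, which is finite exactly when $d-1-\beta>-1$, i.e.\ $\beta<d$.

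For part (i), the hypothesis $\limsup_{|\xi|\to0}\sup_{x}|q(x,\xi)|/|\xi|^{\alpha}<c$ gives me a radius $r>0$ and a finite constant $c'$ such that $\sup_{x\in\R^{d}}|q(x,\xi)|\leq c'|\xi|^{\alpha}$ for all $|\xi|<r$. Hence on $\{|\xi|<r\}$ I would bound the integrand from below:
\be
\frac{1}{\sup_{x\in\R^{d}}|q(x,\xi)|}\geq\frac{1}{c'|\xi|^{\alpha}},\notag
\ee
so that $\int_{\{|\xi|<r\}}\frac{d\xi}{\sup_{x}|q(x,\xi)|}\geq (c')^{-1}\int_{\{|\xi|<r\}}|\xi|^{-\alpha}d\xi$. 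Since $d\leq\alpha$ means $\alpha\geq d$, the power integral on the right diverges (the borderline case $\alpha=d$ giving a logarithmic divergence), and therefore (\ref{eq:1.3}) holds for this $r$. For part (ii), symmetrically, the hypothesis $\liminf_{|\xi|\to0}\inf_{x}\rm{Re}\,\it{q}(x,\xi)/|\xi|^{\alpha}>c>0$ furnishes $r>0$ and $c''>0$ with $\inf_{x\in\R^{d}}\rm{Re}\,\it{q}(x,\xi)\geq c''|\xi|^{\alpha}$ for $|\xi|<r$, whence
\be
\int_{\{|\xi|<r\}}\frac{d\xi}{\inf_{x\in\R^{d}}\rm{Re}\,\it{q}(x,\xi)}\leq (c'')^{-1}\int_{\{|\xi|<r\}}|\xi|^{-\alpha}d\xi,\notag
\ee
and now $d>\alpha$ makes the power integral converge, so (\ref{eq:1.4}) holds.

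I do not anticipate a serious obstacle here; the only point needing mild care is that the $\limsup$ and $\liminf$ hypotheses only control the symbol in a punctured neighborhood of the origin, so I must first extract a concrete radius $r$ on which the clean pointwise bounds are valid, rather than working globally. Once that radius is fixed, everything is a one-line polar-coordinate estimate. It is worth emphasizing that in part (i) the implication uses only the stated local behavior together with the threshold $d\leq\alpha$ to force divergence, and in part (ii) the strict inequality $d>\alpha$ to force convergence; the two borderline dimension/exponent relationships are handled correctly by the power-integral criterion. Finally, I would note that these estimates only verify the integral conditions (\ref{eq:1.3}) and (\ref{eq:1.4})—the recurrence conclusion in (i) then follows from Theorem \ref{tm1.2} and the transience conclusion in (ii) from Theorem \ref{tm1.3}, subject to the respective extra hypotheses (nonnegativity of $\rm{Re}\,\Phi_{\it{t}}$, resp.\ the sector condition) already built into the notion of nice Feller process under consideration.
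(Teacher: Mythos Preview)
Your proof is correct and follows essentially the same route as the paper: extract a small radius from the $\limsup$/$\liminf$ hypothesis to get a pointwise bound $\sup_x|q(x,\xi)|\leq c'|\xi|^{\alpha}$ (resp.\ $\inf_x\rm{Re}\,\it{q}(x,\xi)\geq c''|\xi|^{\alpha}$), and then reduce to the polar-coordinate integral $\int_0^{r}\rho^{d-1-\alpha}d\rho$. One small inaccuracy in your closing remark: the nonnegativity of $\rm{Re}\,\Phi_{\it{t}}$ and the sector condition are \emph{not} built into the paper's definition of ``nice Feller process'' (which only requires (\textbf{C1})--(\textbf{C4})), but this is immaterial since the corollary as stated only asserts that (\ref{eq:1.3}) and (\ref{eq:1.4}) hold, not recurrence or transience.
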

\begin{proof}\begin{enumerate}
               \item [(i)]For $r>0$ small enough and $d\leq\alpha$,  we have
$$\int_{\{|\xi|<r\}}\frac{d\xi}{\sup_{x\in\R^{d}}|q(x,\xi)|}\geq\frac{1}{c}\int_{\{|\xi|<r\}}\frac{d\xi}{|\xi|^{\rm{\alpha}}}=\frac{c_d}{c}\int_0^{r}\rho^{d-1-\alpha}d\rho=\infty,$$
where $c_d=d\pi^{\frac{d}{2}}\Gamma\left(\frac{d}{2}+1\right).$
               \item [(ii)] For $r>0$ small enough and $d>\alpha$, we have
$$\int_{\{|\xi|<r\}}\frac{d\xi}{\inf_{x\in\R^{d}}\rm{Re}\,\it{q}(x,\xi)}\leq\frac{1}{c}\int_{\{|\xi|<r\}}\frac{d\xi}{|\xi|^{\rm{\alpha}}}=\frac{c_d}{c}\int_0^{r}\rho^{d-1-\alpha}d\rho<\infty.$$
             \end{enumerate}
\end{proof}
As a direct consequence of Corollary \ref{c2.7} we get the following
conditions for the recurrence and transience with respect to the
dimension of the state space.
\begin{theorem} \label{tm2.8}Let  $\process{F}$ be a nice Feller process  with  symbol
$q(x,\xi)$. \begin{itemize}
          \item [(i)]
If $\process{F}$ is symmetric,  $d=1,2$ and
$$\sup_{x\in\R^{d}}\int_{\R^{d}}|y|^{2}\nu(x,dy)<\infty,$$  then $q(x,\xi)$ satisfies (\ref{eq:1.3}).
  \item [(ii)] If $d\geq3$ and
$$\liminf_{|\xi|\longrightarrow0}\frac{\sup_{c>0}\inf_{x\in\R^{d}}\left(\langle\xi,c(x)\xi\rangle+\int_{\{|y|\leq
c\}}\langle\xi,y\rangle^{\rm{2}}\nu(\it{x},dy)\right)}{|\xi|^{2}}>\rm{0},$$
then $q(x,\xi)$ satisfies (\ref{eq:1.4}).
\end{itemize}
\end{theorem}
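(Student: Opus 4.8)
The plan is to reduce both statements to Corollary \ref{c2.7} with the exponent $\alpha=2$. For $d=1,2$ one has $d\le 2=\alpha$, which is exactly the regime of part (i) of that corollary, while for $d\ge3$ one has $d>2=\alpha$, matching part (ii). Thus everything comes down to controlling the symbol near $\xi=0$ on the scale $|\xi|^{2}$, and in both cases I would work from the real part of the L\'evy--Khintchine representation, namely $\mathrm{Re}\,q(x,\xi)=\frac{1}{2}\langle\xi,c(x)\xi\rangle+\int_{\R^{d}}(1-\cos\langle\xi,y\rangle)\,\nu(x,dy)$, which holds for every nice Feller process (no symmetry is needed for this identity, since $a(x)=0$ by (\textbf{C3}) and the drift and compensator contributions are purely imaginary).

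For part (i), since $\process{F}$ is symmetric we have $q(x,\xi)=\mathrm{Re}\,q(x,\xi)\ge0$, so $|q(x,\xi)|=q(x,\xi)$. I would then bound from above term by term: $\frac{1}{2}\langle\xi,c(x)\xi\rangle\le\frac{1}{2}\|c\|_\infty|\xi|^{2}$ (the coefficients being bounded by (\textbf{C2})), and $1-\cos\langle\xi,y\rangle\le\frac{1}{2}\langle\xi,y\rangle^{2}\le\frac{1}{2}|\xi|^{2}|y|^{2}$, whence $\sup_{x}|q(x,\xi)|\le\frac{1}{2}\big(\|c\|_\infty+\sup_{x}\int_{\R^{d}}|y|^{2}\nu(x,dy)\big)|\xi|^{2}$. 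The hypothesis makes the bracket finite, so $\limsup_{|\xi|\to0}\sup_{x}|q(x,\xi)|/|\xi|^{2}<\infty$, and Corollary \ref{c2.7}(i) with $\alpha=2$ yields (\ref{eq:1.3}).

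For part (ii) the inequality runs the other way and is the delicate one. Keeping only the Gaussian term and the L\'evy integral over a truncation ball $\{|y|\le c_{0}\}$, and using $1-\cos t\ge\frac{1}{\pi}t^{2}$ for $|t|\le\frac{\pi}{2}$ (so the estimate is legitimate once $c_{0}|\xi|\le\frac{\pi}{2}$, i.e. for $|\xi|$ small), I get $\inf_{x}\mathrm{Re}\,q(x,\xi)\ge\frac{1}{\pi}\inf_{x}\big(\langle\xi,c(x)\xi\rangle+\int_{\{|y|\le c_{0}\}}\langle\xi,y\rangle^{2}\nu(x,dy)\big)=\frac{|\xi|^{2}}{\pi}\,\phi_{c_{0}}(\xi/|\xi|)$, where $\phi_{c_{0}}(e):=\inf_{x}\big(\langle e,c(x)e\rangle+\int_{\{|y|\le c_{0}\}}\langle e,y\rangle^{2}\nu(x,dy)\big)$. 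So it suffices to produce one truncation radius $c_{0}$ with $\inf_{|e|=1}\phi_{c_{0}}(e)\ge\delta>0$; then $\inf_{x}\mathrm{Re}\,q(x,\xi)\ge\frac{\delta}{\pi}|\xi|^{2}$ for all small $|\xi|$, and Corollary \ref{c2.7}(ii) with $\alpha=2<d$ gives (\ref{eq:1.4}).

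Extracting that single $c_{0}$ from the hypothesis is the main obstacle, because the hypothesis carries the supremum over $c$ on the inside: after rescaling $\xi=|\xi|e$ the quotient equals $\sup_{c>0}\phi_{c}(e)$, and $\liminf_{|\xi|\to0}$ turns into $\inf_{|e|=1}\sup_{c>0}\phi_{c}(e)>0$, whereas the lower bound above needs $\sup_{c}\inf_{e}\phi_{c}$ --- the opposite order. I would close this gap with a compactness argument: for fixed $c$ one has $\phi_{c}(e)=\inf_{x}\langle e,M(c,x)e\rangle$ with $M(c,x)=c(x)+\int_{\{|y|\le c\}}yy^{\top}\nu(x,dy)$ bounded uniformly in $x$ (by (\textbf{C2})), so $\phi_{c}$ is Lipschitz on the unit sphere, and $\phi_{c}$ is nondecreasing in $c$. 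Given $\sup_{c}\phi_{c}(e)\ge2\delta$ for every $e$, choose for each $e$ a radius $c_{e}$ with $\phi_{c_{e}}(e)\ge\delta$; Lipschitz continuity gives a neighbourhood on which $\phi_{c_{e}}\ge\delta/2$; covering the compact sphere by finitely many such neighbourhoods and taking $c_{0}$ to be the largest of the corresponding radii, monotonicity in $c$ forces $\phi_{c_{0}}\ge\delta/2$ everywhere. This uniform-over-directions truncation radius is precisely what the truncated estimate requires; it is the only genuinely non-routine step, the remaining computations being the elementary trigonometric bounds used above.
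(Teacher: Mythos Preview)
Your proof is correct and follows the paper's approach exactly: reduce both parts to Corollary~\ref{c2.7} with $\alpha=2$ via the bounds $1-\cos t\le t^{2}$ (part (i)) and $1-\cos t\ge\frac{1}{\pi}t^{2}$ for $|t|\le\frac{\pi}{2}$ (part (ii)). Part (i) is line-for-line the same as the paper.

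For part (ii) you are actually more careful than the paper. The paper simply records the inequality
\[
\frac{\inf_{x}\mathrm{Re}\,q(x,\xi)}{|\xi|^{2}}\ge\frac{1}{\pi}\cdot\frac{\inf_{x}\Big(\langle\xi,c(x)\xi\rangle+\int_{\{|y|\le c\}}\langle\xi,y\rangle^{2}\nu(x,dy)\Big)}{|\xi|^{2}}
\]
``for all $c>0$ and all $|\xi|$ small enough'' and invokes Corollary~\ref{c2.7}(ii). You correctly notice that, after writing $\xi=|\xi|e$, the hypothesis reads $\inf_{|e|=1}\sup_{c>0}\phi_{c}(e)>0$, whereas the displayed bound---whose range of admissible $c$ depends on $|\xi|$---most directly yields control only by $\sup_{c}\inf_{|e|=1}\phi_{c}(e)$. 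Your compactness argument (Lipschitz continuity of each $\phi_{c}$ on the sphere from the uniform bound on $M(c,x)$ implied by (\textbf{C2}), monotonicity of $\phi_{c}$ in $c$, finite subcover) produces a single $c_{0}$ with $\inf_{|e|=1}\phi_{c_{0}}(e)>0$, which is precisely what is needed. This step is genuine and justifies what the paper leaves implicit; the rest of your argument and the paper's coincide.
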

\begin{proof}
\begin{enumerate}
  \item [(i)] The
claim easily follows from the facts that $1-\cos y\leq y^{2}$ for
all $y\in\R$,
$$\frac{\sup_{x\in\R^{d}}q(x,\xi)}{|\xi|^{2}}\leq
d\sup_{x\in\R^{d}}\max_{1\leq i,j\leq
d}|c_{ij}(x)|+\sup_{x\in\R^{d}}\int_{\R^{d}}|y|^{2}\nu(x,dy)$$ for
all $|\xi|$ small enough and Corollary \ref{c2.7} (i). Here we used
the fact that for  an arbitrary square matrix $A=(a_{ij})_{1\leq
i,j\leq d}$ and $\xi\in\R^{d}$, we have $|\langle
\xi,A\xi\rangle|\leq|\xi||A\xi|\leq d\max_{1\leq i,j\leq
d}|a_{ij}||\xi|^{2}.$
  \item [(ii)] The
claim is an immediate consequence of the facts that $1-\cos
y\geq\frac{1}{\pi}y^{2}$ for all $|y|\leq\frac{\pi}{2}$  and
$$\frac{\inf_{x\in\R^{d}}\rm{Re}\,\it{q(x,\xi)}}{|
\xi|^{2}}\geq
\frac{\inf_{x\in\R^{d}}\left(\langle\xi,c(x)\xi\rangle+\int_{\{|y|\leq
c\}}\langle\xi,y\rangle^{\rm{2}}\nu(\it{x},dy)\right)}{\pi|\xi|^{2}}$$
for all $c>0$ and all $|\xi|$ small enough and Corollary \ref{c2.7}
(ii).
\end{enumerate}
\end{proof}

As in the L\'evy process case, it is natural to expect that
$\rm{Re}\,\Phi_{\it{t}}(\it{x},\xi)=\rm{\Phi}_{\it{t}}(\it{x},\xi)$
if, and only if, $\process{F}$ is a  symmetric nice  Feller process.
The necessity easily follows from \cite[Theorem
2.1]{rene-wang-feller}. On the other hand, according to
\cite[Theorem 2.1 and Proposition 4.6]{rene-wang-feller}, the
sufficiency holds under the assumption that $C_c^{\infty}(\R^{d})$
is an operator core for the Feller generator
$(\mathcal{A},\mathcal{D}(\mathcal{A}))$, that is,
$\overline{\mathcal{A}|_{C_c^{\infty}(\R^{d})}}=\mathcal{A}$ on
$\mathcal{D}(\mathcal{A})$ (see also \cite[Theorem
1]{bjoern-rene-approx}). In the recurrence case, we require that
$\rm{Re}\,\Phi_{\it{t}}(\rm{0},\xi)\geq\rm{0}$ for all $t\geq0$ and
all $\xi\in\R^{d}$. Except in the symmetric L\'evy process case,
this assumption is trivially satisfied for nice Feller processes
which can be obtained by the symmetrization, which has been
introduced in \cite{rene-wang-feller}. Let $\process{F}$ be a nice
Feller process with  symbol $q(x,\xi)$ and let $\process{\bar{F}}$
be an independent copy of $\process{F}$. Let us define
$\tilde{F}_t:=2\bar{F}_0-\bar{F}_t.$ Then, by \cite[Theorem
2.1]{rene-wang-feller}, $\process{\tilde{F}}$ is a nice Feller
process with  symbol $\tilde{q}(x,\xi)=q(x,\xi)$ and
$\tilde{\Phi}_t(x,\xi)=\Phi_t(x,-\xi).$ Now, let  us define the
\emph{symmetrization} of $\process{F}$ by
$F^{s}_t:=(F_t+\tilde{F}_t)/2.$ Then, by \cite[Lemma
2.8]{rene-wang-feller}, $\process{F^{s}}$ is again a nice Feller
process with  symbol $q^{s}(x,\xi)=2\rm{Re}\,\it{q}(x,\xi/\rm{2})$
and
$\Phi^{s}_t=\Phi_t(x,\xi/2)\tilde{\Phi}_t(x,\xi/2)=|\Phi_t(x,\xi/2)|^{2}.$

Using the above symmetrization technique, we can consider the
recurrence and transience property of   Feller-Dynkin diffusions and
stable-like processes. Recall that a \emph{Feller-Dynkin diffusion}
is a Feller process with continuous sample paths satisfying
conditions (\textbf{C1}) and (\textbf{C3}). In particular,
Feller-Dynkin diffusions are determined by a symbol of the form
 $q(x,\xi)=- i\langle \xi,b(x)\rangle +
\frac{1}{2}\langle\xi,c(x)\xi\rangle$. Further,  it is easy to check
that the Feller generator $(\mathcal{A},\mathcal{D}_{\mathcal{A}})$
of Feller-Dynkin diffusions restricted to $C_c^{\infty}(\R^{d})$ is
a second-order elliptic operator
$$\mathcal{A}|_{C_c^{\infty}(\R^{d})}f(x)=\sum_{i=1}^{d}b_i(x)\frac{\partial f(x)}{\partial
x_i}+\frac{1}{2}\sum_{i,j=1}^{d}c_{ij}(x)\frac{\partial^{2}f(x)}{\partial
x_i \partial x_j}.$$ Hence, since Feller-Dynkin diffusions are
Feller process, $\mathcal{A}(C_c^{\infty}(\R^{d}))\subseteq
C_\infty(\R^{d})$, therefore $b(x)$ and $c(x)$ are continuous
functions. For further properties of Feller-Dynkin diffusions see
\cite{rogersI} and \cite{rogersII}.

\begin{theorem}\label{tm2.9} Let $\process{F}$ be a Feller-Dynkin
diffusion with  symbol $q(x,\xi)=- i\langle \xi,b(x)\rangle +
\frac{1}{2}\langle\xi,c(x)\xi\rangle$ which satisfies condition
(\textbf{C2}). Further, assume that
$\inf_{x\in\R^{d}}\langle\xi,c(x)\xi\rangle\geq c|\xi|^{2}$ for all
$\xi\in\R^{d}$ and some $c>0$   and that $b(x)$ and $c(x)$ are
H\"older continuous with the index $0<\beta\leq1$.
\begin{enumerate}
  \item [(i)] If $\process{F}$ is symmetric and $d=1,2$, then
  $\process{F}$ is recurrent.
  \item [(ii)] If $d\geq3$, then $\process{F}$ is transient.
\end{enumerate}
\end{theorem}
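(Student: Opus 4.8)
The plan is to deduce both parts from the Chung--Fuchs criteria of Theorems \ref{tm1.2} and \ref{tm1.3}, applied not to $\process{F}$ itself but to its symmetrization $\process{F^s}$, and then to transfer the conclusion back to $\process{F}$. The reason for detouring through $\process{F^s}$ is that the two ``side conditions'' of those theorems hold automatically for it: its symbol $q^s(x,\xi)=2\,\mathrm{Re}\,q(x,\xi/2)=\tfrac14\langle\xi,c(x)\xi\rangle$ is real, so $\process{F^s}$ is symmetric, the sector condition (\ref{eq:1.1}) holds with constant $0$, and $\mathrm{Re}\,\Phi^s_t(0,\xi)=\Phi^s_t(0,\xi)=|\Phi_t(0,\xi/2)|^2\geq0$ for all $t\geq0$ and $\xi\in\R^d$. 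Neither is available for $\process{F}$ directly: when $b\not\equiv0$ the drift term $-i\langle\xi,b(x)\rangle$ destroys the sector condition near the origin, and even in the symmetric case the real characteristic function $\Phi_t(0,\xi)$ of a genuine diffusion need not be nonnegative. So symmetrizing is exactly what legitimizes invoking Theorems \ref{tm1.2} and \ref{tm1.3}.

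On the $\process{F^s}$ side the estimates are immediate and are those of Corollary \ref{c2.7} with $\alpha=2$. Boundedness of $c(\cdot)$ (condition (\textbf{C2})) gives $\sup_x q^s(x,\xi)\leq\tfrac14 d\,\|c\|_\infty|\xi|^2$, hence $\limsup_{|\xi|\to0}\sup_x q^s(x,\xi)/|\xi|^2<\infty$, so by Corollary \ref{c2.7}(i) the symbol $q^s$ satisfies (\ref{eq:1.3}) when $d\leq2$. Uniform ellipticity gives $\inf_x q^s(x,\xi)\geq\tfrac{c}{4}|\xi|^2$, hence $\liminf_{|\xi|\to0}\inf_x q^s(x,\xi)/|\xi|^2>0$, so by Corollary \ref{c2.7}(ii) the symbol $q^s$ satisfies (\ref{eq:1.4}) when $d\geq3$. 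Consequently Theorem \ref{tm1.2} makes $\process{F^s}$ recurrent for $d=1,2$, and Theorem \ref{tm1.3} makes $\process{F^s}$ transient for $d\geq3$.

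Before any of this is meaningful I must know that $\process{F}$ and $\process{F^s}$ are \emph{nice}, since the definition of a Feller--Dynkin diffusion supplies only (\textbf{C1}) and (\textbf{C3}) and I have assumed (\textbf{C2}); what is missing is open-set irreducibility (\textbf{C4}). This I would obtain from the existence of a jointly continuous, strictly positive transition density: uniform ellipticity together with the H\"older continuity of $b$ and $c$ is precisely the classical hypothesis guaranteeing such a density for a second-order operator, and positivity yields $\lambda$-irreducibility, hence (\textbf{C4}). For the drift-free $\process{F^s}$ one may instead read this off Theorem \ref{tm2.6}, whose sector hypothesis is trivial (real symbol) and whose integrability hypothesis (\ref{eq:2.2}) follows from $\inf_x\mathrm{Re}\,q^s(x,\xi)\geq\tfrac{c}{4}|\xi|^2$.

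The genuine difficulty, and the last step, is the transfer from $\process{F^s}$ back to $\process{F}$, where one must watch the direction. For part (ii) the needed implication is ``$\process{F^s}$ transient $\Rightarrow$ $\process{F}$ transient,'' and this should hold with no restriction on the drift: the decisive upper bound $|\Phi_t(x,\xi)|\leq\exp[-\tfrac{t}{16}\inf_z\mathrm{Re}\,q(z,2\xi)]$ of \cite[Theorem 2.7]{rene-wang-feller} involves only $\mathrm{Re}\,q$, which is untouched by $b$, and $q$ itself satisfies (\ref{eq:1.4}) for $d\geq3$ by the same ellipticity estimate, so the occupation-time bound behind Theorem \ref{tm1.3}, run with this drift-free upper bound, gives $\int_0^\infty\mathbb{P}^x(F_t\in B)\,dt<\infty$ for bounded $B$. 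For part (i) the needed implication is ``$\process{F^s}$ recurrent $\Rightarrow$ $\process{F}$ recurrent,'' and here the symmetry hypothesis $b\equiv0$ is exactly what makes it correct, since a dominating drift would render $\process{F}$ transient while leaving $\process{F^s}$ recurrent, as one-dimensional Brownian motion with drift already shows. I expect this transfer, rather than any symbol computation, to be the crux: it must be proved in the Feller setting without the L\'evy identity $\Phi_t=e^{-tq}$, presumably by comparing the resolvent-type integrals $\int_0^\infty e^{-\alpha t}\mathbb{E}^0[g(F_t)]\,dt$ and $\int_0^\infty e^{-\alpha t}\mathbb{E}^0[g(F^s_t)]\,dt$ for the nonnegative-Fourier-transform test functions $g$ of the proof of Theorem \ref{tm1.2}, using the lower bound of Lemma \ref{l2.2} together with $\Phi^s_t(0,\xi)=|\Phi_t(0,\xi/2)|^2$.
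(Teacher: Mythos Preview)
Your strategy for part (i) has a genuine gap that you yourself flag: the transfer ``$\process{F^s}$ recurrent $\Rightarrow$ $\process{F}$ recurrent'' is left unproved, and the sketched comparison of resolvent integrals does not obviously close it without essentially reproving a Chung--Fuchs criterion from scratch.

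The paper sidesteps this transfer problem entirely by running the symmetrization in the \emph{opposite} direction. Rather than symmetrizing $\process{F}$ and descending to $\process{F^s}$, it exhibits $\process{F}$ itself as the symmetrization of another diffusion: take $\process{\bar F}$ with symbol $\bar q(x,\xi)=\langle\xi,c(x)\xi\rangle$ (double the diffusion matrix, still driftless). Its symmetrization has symbol $2\,\mathrm{Re}\,\bar q(x,\xi/2)=\tfrac12\langle\xi,c(x)\xi\rangle=q(x,\xi)$, and by the uniqueness theorem for such diffusions (\cite[Theorem 24.1]{rogersII}) one gets $\process{\bar F^{\,s}}\stackrel{\hbox{\scriptsize d}}{=}\process{F}$. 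But then $\Phi_t(x,\xi)=|\bar\Phi_t(x,\xi/2)|^2\geq0$ holds for $\process{F}$ itself, and Theorem \ref{tm1.2} (via Theorem \ref{tm2.8}(i)) applies directly---no transfer step is needed. This answers precisely the worry you voiced, that ``even in the symmetric case the real characteristic function $\Phi_t(0,\xi)$ of a genuine diffusion need not be nonnegative'': for \emph{this} diffusion it is nonnegative, and the reason is the symmetrization identity together with well-posedness.

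For part (ii) the paper does not symmetrize at all; it invokes Theorems \ref{tm1.3} and \ref{tm2.8}(ii) for $\process{F}$ directly. Your concern about the sector condition when $b\not\equiv0$ is fair as a matter of hypothesis bookkeeping, but the paper treats the cited transience criterion as applicable here.

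For (\textbf{C4}) the paper proceeds as you surmise, citing \cite[Theorem A]{sheu} for a strictly positive transition density under uniform ellipticity and H\"older coefficients.
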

\begin{proof}First, let us remark that,  by \cite[Theorem A]{sheu}, $\process{F}$
possesses a strictly positive density function. In particular,
$\process{F}$ is $\lambda$-irreducible, that is, it satisfies
condition (\textbf{C4}). Hence, $\process{F}$ is a nice Feller
process.
\begin{enumerate}
               \item [(i)] Let $\process{\bar{F}}$ be a Feller-Dynkin diffusion
               with
symbol given by $\bar{q}(x,\xi)=\langle\xi,c(x)\xi\rangle$. The
existence (and uniqueness) of the process $\process{\bar{F}}$  is
given in \cite[Theorem 24.1]{rogersII}. Again, $\process{\bar{F}}$
is a symmetric nice Feller process. Now, by the symmetrization and
\cite[Theorem 24.1]{rogersII},
$\process{\bar{F}^{s}}\stackrel{\hbox{\scriptsize d}}{=}\process{F}$
and, in particular, $\Phi_t(x,\xi)\geq0$ for all $t\geq0$ and all
$x,\xi\in\R^{d}.$ Now, the claim easily follows from Theorem
\ref{tm2.8} (i).
               \item [(ii)] This is an immediate
               consequence of
               Theorems \ref{tm1.3} and \ref{tm2.8} (ii).
             \end{enumerate}
\end{proof}
Let $\alpha:\R^{d}\longrightarrow(0,2)$ and
$\gamma:\R^{d}\longrightarrow(0,\infty)$ be arbitrary bounded and
continuously differentiable functions with bounded derivatives, such
that
$0<\underline{\alpha}:=\inf_{x\in\R^{d}}\alpha(x)\leq\sup_{x\in\R^{d}}\alpha(x)=:\overline{\alpha}<2$
and $\inf_{x\in\R^{d}}\gamma(x)>0$. Under this assumptions, in
\cite{bass-stablelike} and
                                                         \cite[Theorem 3.3.]{rene-wang-feller}
                                                         it has been shown
                                                         that there
                                                         exists a
                                                         unique nice
                                                         Feller
                                                         process,
                                                         called a
                                                         \emph{stable-like
                                                         process},
                                                         determined
by a symbol of the form $p(x,\xi)=\gamma(x)|\xi|^{\alpha(x)}$. The
$\lambda$-irreducibility follows from Theorem \ref{tm2.6} and
\cite[Theorem 5.1]{vasili-stablelike}. Further, note that when
$\alpha(x)$ and $\gamma(x)$ are constant functions, then we deal
with a rotationally invariant stable L\'evy process. Now, as a
direct consequence of Theorems \ref{tm1.2} and \ref{tm1.3} and
Corollary \ref{c2.7}, we get the following conditions for recurrence
and transience of stable-like processes.
\begin{theorem}\label{tm2.10} Let $\process{F}$ be a stable-like process with  symbol given by $q(x,\xi)=\gamma(x)|\xi|^{\alpha(x)}.$\begin{enumerate}
\item [(i)]
                                                               If
                                                               $\process{F}$
                                                               is
                                                               one-dimensional
                                                               and
                                                               if
                                                               $\inf_{x\in\R}\alpha(x)\geq1$,
                                                               then
                                                               $\process{F}$
                                                               is
                                                               recurrent.
\item[(ii)] If
                                                               $\process{F}$
                                                               is
                                                               one-dimensional
                                                               and
                                                               if
                                                               $\sup_{x\in\R}\alpha(x)<1$,
                                                               then
                                                               $\process{F}$
                                                               is
                                                               transient.
                                                              \item[(iii)]
                                                               If
                                                               $d\geq2$,
then
                                                               $\process{F}$
                                                               is
                                                               transient.\end{enumerate}
\end{theorem}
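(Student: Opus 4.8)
The plan is to read off all three claims from Corollary \ref{c2.7} applied to the symbol $q(x,\xi)=\gamma(x)|\xi|^{\alpha(x)}$, after first recording its small-frequency behaviour. Write $\underline{\alpha}=\inf_{x}\alpha(x)$, $\overline{\alpha}=\sup_{x}\alpha(x)$, $\underline{\gamma}=\inf_{x}\gamma(x)>0$ and $\overline{\gamma}=\sup_{x}\gamma(x)<\infty$. Since $q(x,\xi)$ is real and nonnegative, $\process{F}$ is a symmetric nice Feller process: $\mathrm{Im}\,q\equiv0$, so the sector condition (\ref{eq:1.1}) holds trivially with $c=0$ and $\mathrm{Re}\,q=q=|q|$. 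The only analytic input I need is that $\rho\mapsto\rho^{\beta}$ is decreasing in $\beta$ for $\rho\in(0,1]$; taking $\rho=|\xi|$ this gives, for all $|\xi|\le1$,
$$\underline{\gamma}\,|\xi|^{\overline{\alpha}}\le\inf_{x\in\R^{d}}q(x,\xi)\le\sup_{x\in\R^{d}}q(x,\xi)\le\overline{\gamma}\,|\xi|^{\underline{\alpha}}.$$

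For the transience claims (ii) and (iii) I would use the lower bound. It yields $\liminf_{|\xi|\to0}\inf_{x}\mathrm{Re}\,q(x,\xi)/|\xi|^{\overline{\alpha}}\ge\underline{\gamma}>0$, so Corollary \ref{c2.7}(ii) (with exponent $\overline{\alpha}$ and any constant in $(0,\underline{\gamma})$) produces condition (\ref{eq:1.4}) whenever $d>\overline{\alpha}$. In case (ii) we have $d=1>\overline{\alpha}$ precisely because $\sup_{x}\alpha(x)<1$, and in case (iii) we have $d\ge2>\overline{\alpha}$ because $\overline{\alpha}<2$. Since the sector condition holds in both, Theorem \ref{tm1.3} gives transience.

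For the recurrence claim (i) I would use the upper bound, which gives $\limsup_{|\xi|\to0}\sup_{x}|q(x,\xi)|/|\xi|^{\underline{\alpha}}\le\overline{\gamma}<\infty$; Corollary \ref{c2.7}(i) (with exponent $\underline{\alpha}$ and constant $\overline{\gamma}+1$) then yields condition (\ref{eq:1.3}) as soon as $d\le\underline{\alpha}$, which holds here since $d=1$ and $\inf_{x}\alpha(x)\ge1$. The genuine obstacle is the remaining hypothesis of Theorem \ref{tm1.2}, namely $\mathrm{Re}\,\Phi_{t}(0,\xi)\ge0$: a symmetric symbol does not by itself force nonnegativity of the characteristic function. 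I would remove it by the symmetrization device recalled before Theorem \ref{tm2.9}. Set $\tilde{\gamma}(x):=2^{\alpha(x)-1}\gamma(x)$, which is again bounded, bounded below away from $0$, and continuously differentiable with bounded derivatives, and let $\process{\tilde{F}}$ be the stable-like process with symbol $\tilde{\gamma}(x)|\xi|^{\alpha(x)}$. Its symmetrization has symbol $2\,\tilde{\gamma}(x)|\xi/2|^{\alpha(x)}=\gamma(x)|\xi|^{\alpha(x)}=q(x,\xi)$, so by the uniqueness of stable-like processes $\process{F}\stackrel{d}{=}\process{\tilde{F}^{s}}$, whence $\Phi_{t}(x,\xi)=|\tilde{\Phi}_{t}(x,\xi/2)|^{2}\ge0$ for all $t\ge0$ and all $x,\xi\in\R^{d}$. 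In particular $\mathrm{Re}\,\Phi_{t}(0,\xi)=\Phi_{t}(0,\xi)\ge0$, and Theorem \ref{tm1.2} then gives recurrence. Thus the frequency estimates are routine once one exploits the monotonicity of $|\xi|^{\beta}$ for $|\xi|\le1$, and the only step demanding care is securing the nonnegativity of the characteristic function in case (i) via this symmetrization.
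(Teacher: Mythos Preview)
Your proof is correct and follows essentially the same approach as the paper: both verify the Chung--Fuchs conditions via Corollary~\ref{c2.7} and then invoke Theorems~\ref{tm1.2} and~\ref{tm1.3}, with the crucial nonnegativity $\Phi_t(x,\xi)\ge0$ for part~(i) obtained by realizing $\process{F}$ as the symmetrization of the stable-like process with symbol $2^{\alpha(x)-1}\gamma(x)|\xi|^{\alpha(x)}$. You spell out the frequency estimates and the symmetrization computation in more detail than the paper, but the argument is the same.
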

\begin{proof}
Let $\process{\bar{F}}$ be a stable-like process determined by a
symbol of the form
$$\bar{q}(x,\xi)=2^{\alpha(x)-1}\gamma(x)|\xi|^{\alpha(x)}.$$ Then, by
the symmetrization,
$\process{\bar{F}^{s}}\stackrel{\hbox{\scriptsize d}}{=}\process{F}$
and, in particular, $\Phi_t(x,\xi)\geq0$ for all $t\geq0$ and all
$x,\xi\in\R^{d}.$ Now, the desired results easily follow from
Theorems \ref{tm1.2} and \ref{tm1.3} and Corollary \ref{c2.7}.
\end{proof}
Let us remark that the recurrence and transience property of
stable-like processes has been studied extensively in  the
literature (see \cite{bjoern-overshoot}, \cite{franke-periodic},
\cite{sandric-periodic}, \cite{sandric-ergodic}, \cite{sandric-spa},
\cite{sandric-rectrans},
 \cite{rene-wang-feller}).

In what follows, we briefly discuss the recurrence and transience
property of symmetric nice Feller processes  obtained by variable
order subordination (see also \cite{rene-wang-feller}). Let
$q:\R^{d}\longrightarrow\R$ be a continuous and negative definite
function such that $q(0) = 0$ (that is, $q(\xi)$ is the symbol of
some symmetric L\'evy process). Further, let $f :\R^{d}\times[0,
\infty)\longrightarrow[0,\infty)$ be a measurable function such that
$\sup_{x\in\R^{d}} f(x, t)\leq c(1 + t)$ for some $c\geq0$ and all
$t\in[0,\infty)$, and for fixed $x\in\R^{d}$ the function
$t\longrightarrow f(x, t)$ is a Bernstein function with $f(x, 0) =
0$. Bernstein functions are the characteristic Laplace exponents of
subordinators (L\'evy processes with nondecreasing sample paths).
For more on Bernstein functions we refer the readers to the
monograph   \cite{Bernstein}. Now, since $q(\xi)\geq0$ for all
$\xi\in\R^{d}$, the function
$$\bar{q}(x,\xi) := f(x,q(\xi))$$ is well
defined  and, according to \cite[Theorem 5.2]{Bernstein} and
\cite[Theorem 30.1]{sato-book},  $\xi\longmapsto \bar{q}(x,\xi)$  is
a continuous and negative definite function  satisfying conditions
(\textbf{C2}) and (\textbf{C3}). Hence, $\bar{q}(x,\xi)$ is possibly
the symbol of some  symmetric Feller process. Of special interest is
the case when $f(x,t)=t^{\alpha(x)},$ where
$\alpha:\R^{d}\longmapsto [0,1]$, that is,  $\bar{q}(x,\xi)$
describes variable order subordination. For sufficient conditions on
the symbol $q(\xi)$ and function $\alpha(x)$ such that
$\bar{q}(x,\xi)$ is the symbol of some Feller process see
\cite{jacob} and \cite{hoh} and  the references therein. Now, let
$0\leq\underline{\alpha}:=\inf_{x\in\R^{d}}\alpha(x)\leq\sup_{x\in\R^{d}}\alpha(x)=:\overline{\alpha}\leq1$.
Then, since the symbol $q(\xi)$ is continuous and $q(0)=0$, there
exists $r>0$ small enough, such that $q(\xi)\leq1$ for all
$|\xi|<r.$ In particular, the conditions in (\ref{eq:1.3}) and
(\ref{eq:1.4}) hold true if
$$\int_{\{|\xi|<r\}}\frac{d\xi}{\left(q(\xi)\right)^{\underline{\alpha}}}=\infty\quad\textrm{and}\quad
\int_{\{|\xi|<r\}}\frac{d\xi}{\left(q(\xi)\right)^{\overline{\alpha}}}<\infty,$$
respectively (see also \cite[Corollary 3.2]{rene-wang-feller}). Note
that when $q(\xi)$ is the symbol of a  Brownian motion, then by
variable order subordination we get a stable-like process.

If we know the distribution of $\process{F}$, in order to prove the
recurrence of $\process{F}$, the assumption
$\rm{Re}\,\Phi_{\it{t}}(0,\xi)\geq\rm{0}$ for all $t\geq0$ and all
$\xi\in\R^{d}$ can be replaced by the following assumptions.
\begin{proposition}Let $\process{F}$ be a nice Feller process with  symbol $q(x,\xi)$ which
satisfies  the condition in (\ref{eq:1.3}). If there exists
$x_0\in\R^{d}$ such that for every $a>0$ there exist $b>0$,
$\varepsilon>0$ and $t_0\geq0$, such that
$$\mathbb{P}^{x_0}(F_t\in B_a(x_0))\geq \varepsilon\sup_{y\in\R^{d}}\mathbb{P}^{x_0}(F_t\in B_b(2x_0-y))$$ for all $t\geq t_0$, then
$\process{F}$ is recurrent. Here,  $B_r(x):=\{z:|z-x|<r\}$ denotes
the open ball of radius $r>0$ around $x\in\R^{d}$. In addition, if
$q(x,\xi)$ satisfies  the condition in (\ref{eq:2.2}), then
$\process{F}$ is recurrent if there exists $x_0\in\R^{d}$ such that
for every $a>0$ there exist $\varepsilon>0$ and $t_0\geq0$, such
that
$$\mathbb{P}^{x_0}(F_t\in B_a(x_0))\geq \varepsilon\int_{\R^{d}}\exp\left[-t\inf_{x\in\R^{d}}\rm{Re}\,\it{q}(x,\xi)\right]d\xi $$ for all
$t\geq t_0$.
\end{proposition}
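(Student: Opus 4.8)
The plan is to use Proposition \ref{p2.1}~(iv): it suffices to prove that $\int_0^\infty\mathbb{P}^{x_0}(F_t\in B_a(x_0))\,dt=\infty$ for every $a>0$, since every open neighbourhood of $x_0$ contains some ball $B_a(x_0)$. Fix $a>0$ and let $b,\varepsilon,t_0$ be the constants furnished by the hypothesis. The assumed inequality immediately gives
\[
\int_{t_0}^\infty\mathbb{P}^{x_0}(F_t\in B_a(x_0))\,dt\;\geq\;\varepsilon\int_{t_0}^\infty\sup_{y\in\R^d}\mathbb{P}^{x_0}(F_t\in B_b(2x_0-y))\,dt,
\]
so the whole problem reduces to showing that the integral on the right is infinite.

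The decisive step is to bound the supremum from below by a symmetrized probability whose characteristic function is genuinely nonnegative. Since $y\mapsto 2x_0-y$ is a bijection of $\R^d$, the supremum equals $\sup_{w\in\R^d}\mathbb{P}^{x_0}(F_t\in B_b(w))$, and a supremum dominates any average of the same quantity; averaging against the marginal law of $F_t$ and using an independent copy $\process{\bar F}$ of $\process{F}$ started at $x_0$ yields
\[
\sup_{w\in\R^d}\mathbb{P}^{x_0}(F_t\in B_b(w))\;\geq\;\int_{\R^d}\mathbb{P}^{x_0}(F_t\in B_b(w))\,\mathbb{P}^{x_0}(\bar F_t\in dw)=\mathbb{P}(|F_t-\bar F_t|<b)=\mathbb{P}^{x_0}(F^s_t\in B_{b/2}(x_0)),
\]
where $\process{F^s}$ is the symmetrization of $\process{F}$ (note $F^s_t-x_0=(F_t-\bar F_t)/2$). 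Thus it is enough to prove $\int_0^\infty\mathbb{P}^{x_0}(F^s_t\in B_{b/2}(x_0))\,dt=\infty$, the part over $[0,t_0]$ being at most $t_0$.

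This last fact I would obtain by applying Theorem \ref{tm1.2} to $\process{F^s}$. By the symmetrization discussion, $\process{F^s}$ is a nice Feller process with symbol $q^s(x,\xi)=2\,\mathrm{Re}\,q(x,\xi/2)$, and started at the origin its characteristic function is $|\Phi_t(0,\xi/2)|^2\geq0$, so the nonnegativity hypothesis of Theorem \ref{tm1.2} holds automatically. Condition (\ref{eq:1.3}) is inherited from $\process{F}$: since $|q^s(x,\xi)|\leq 2|q(x,\xi/2)|$, the substitution $\eta=\xi/2$ turns $\int_{\{|\xi|<2r\}}d\xi/\sup_x|q^s(x,\xi)|$ into a positive multiple of $\int_{\{|\eta|<r\}}d\eta/\sup_x|q(x,\eta)|$, which is infinite by the standing assumption. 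Hence $\process{F^s}$ is recurrent, and by the Meyn--Tweedie definition together with condition (\textbf{C4}) for $\process{F^s}$ the occupation integral of the open ball $B_{b/2}(x_0)$ is infinite, as required. The main obstacle is precisely the passage from the supremum to $\mathbb{P}(|F_t-\bar F_t|<b)$: one must exploit that the centre of the ball ranges over all of $\R^d$ so that the average against the marginal law produces the squared modulus $|\Phi_t(x_0,\xi)|^2$ (and not $\Phi_t(x_0,\xi)^2$), thereby restoring nonnegativity. If one wishes to avoid invoking the niceness of $\process{F^s}$, the same conclusion follows by repeating the computation in the proof of Theorem \ref{tm1.2} with the nonnegative real function $|\Phi_t(x_0,\xi)|^2$ in place of $\mathrm{Re}\,\Phi_t(0,\xi)$, using $|\Phi_t(x_0,\xi)|^2\geq(\mathrm{Re}\,\Phi_t(x_0,\xi))^2$ together with the lower bound of Lemma \ref{l2.2} on the interval where the latter is positive.

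For the additional assertion the reduction is identical, and the hypothesis now gives
\[
\int_{t_0}^\infty\mathbb{P}^{x_0}(F_t\in B_a(x_0))\,dt\;\geq\;\varepsilon\int_{t_0}^\infty\int_{\R^d}\exp\!\left[-t\inf_{x\in\R^d}\mathrm{Re}\,q(x,\xi)\right]d\xi\,dt.
\]
Writing $m(\xi):=\inf_{x\in\R^d}\mathrm{Re}\,q(x,\xi)$, which is nonnegative by (\textbf{C3}), Tonelli's theorem rewrites the right-hand side as $\varepsilon\int_{\R^d}\exp[-t_0 m(\xi)]/m(\xi)\,d\xi$. Restricting to $\{|\xi|<r\}$ with $r$ the radius of the standing assumption (\ref{eq:1.3}), condition (\textbf{C2}) gives $m(\xi)\leq\sup_x|q(x,\xi)|\leq c(1+r^2)$ there, so $\exp[-t_0 m(\xi)]$ is bounded below by a positive constant while $1/m(\xi)\geq1/\sup_x|q(x,\xi)|$; hence the integral dominates a constant multiple of $\int_{\{|\xi|<r\}}d\xi/\sup_x|q(x,\xi)|=\infty$. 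This forces $\int_0^\infty\mathbb{P}^{x_0}(F_t\in B_a(x_0))\,dt=\infty$, and the role of (\ref{eq:2.2}) is exactly to keep the assumed lower bound finite, so that the hypothesis is not vacuous.
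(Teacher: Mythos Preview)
For the first assertion your argument is essentially the paper's: both pass to the symmetrization $\process{F^s}$, observe that
\[
\mathbb{P}^{x_0}\bigl(F^s_t\in B_{b/2}(x_0)\bigr)=\int_{\R^d}\mathbb{P}^{x_0}\bigl(F_t\in B_b(2x_0-y)\bigr)\,\mathbb{P}^{x_0}(F_t\in dy)\leq\sup_{y\in\R^d}\mathbb{P}^{x_0}\bigl(F_t\in B_b(2x_0-y)\bigr),
\]
and conclude via the recurrence of $\process{F^s}$, which follows from (\ref{eq:1.3}) together with the automatic nonnegativity of $\Phi^s_t(x_0,\xi)=|\Phi_t(x_0,\xi/2)|^{2}$. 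You are more explicit than the paper in checking that (\ref{eq:1.3}) transfers from $q(x,\xi)$ to $q^s(x,\xi)=2\,\mathrm{Re}\,q(x,\xi/2)$, which is a useful clarification.

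For the second assertion your route is genuinely different. The paper uses (\ref{eq:2.2}) via Theorem \ref{tm2.6} to obtain a transition density, and then the uniform density bound of \cite{rene-wang-feller} to estimate $\sup_{y}\mathbb{P}^{x_0}(F_t\in B_b(2x_0-y))$ from above by a constant times $\int_{\R^d}\exp[-\tfrac{t}{16}\inf_x\mathrm{Re}\,q(x,\xi)]\,d\xi$, thereby reducing the second hypothesis to the first. You bypass the density machinery entirely: integrating the assumed lower bound over $t\in[t_0,\infty)$ and applying Tonelli gives $\varepsilon\int_{\R^d}e^{-t_0 m(\xi)}/m(\xi)\,d\xi$ with $m(\xi)=\inf_x\mathrm{Re}\,q(x,\xi)$, and then (\textbf{C2}) together with $m(\xi)\leq\sup_x|q(x,\xi)|$ bounds this from below on $\{|\xi|<r\}$ by a positive multiple of the divergent integral in (\ref{eq:1.3}). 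Your argument is shorter and makes transparent that (\ref{eq:2.2}) enters only to keep the hypothesis nonvacuous; the paper's approach, in exchange, explains why the second hypothesis is a natural specialisation of the first.
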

\begin{proof} Let $\process{F^{s}}$ be the symmetrization of
$\process{F}$. Then, by assumption, $\process{F^{s}}$ is recurrent.
Next, in order to prove the recurrence of $\process{F}$, by
Proposition \ref{p2.1} (iv), it suffices to show that there exists
$x\in\R^{d}$ such that for  every $a>0$ we have
$$\int_0^{\infty} \mathbb{P}^{x}(F_t\in B_a(x))=\infty.$$ Let $a>0$ be arbitrary
and let $x_0\in\R^{d}$,  $b>0$, $\varepsilon>0$ and $t_0\geq0$ be as
above. Then, for $t\geq t_0$ we have
\begin{align*}\mathbb{P}^{x_0}(F^{s}_t\in
B_{b/2}(x_0))=\int_{\R^{d}}\mathbb{P}^{x_0}(F_t\in
B_b(2x_0-y))\mathbb{P}^{x_0}(F_t\in dy)\leq
\frac{\mathbb{P}^{x_0}(F_t\in B_a(x_0))}{\varepsilon}.
\end{align*}

 To prove the second part, note that, by Theorem \ref{tm2.6},
 $\mathbb{P}^{x}(F_t\in dy)=p(t,x,y)dy$, for $t>0$ and $x,y\in\R^{d}.$ Thus, by
 \cite[Theorem 1.1]{rene-wang-feller}, we have
$$\mathbb{P}^{x_0}(F_t\in B_b(2x_0-y))=\int_{B_{b}(2x_0-y)-x_0}p(t,x_0,z)dz\leq\frac{\lambda(B_b(0))}{(4\pi)^{d}}\int_{\R^{d}}\exp\left[-\frac{t}{16}\inf_{x\in\R^{d}}\rm{Re}\,\it{q}(x,\xi)\right]d\xi$$
for all $t>0$ and all $y\in\R^{d}$, where $x_0\in\R^{d}$ and $b>0$
are as above.
\end{proof}

 In Section \ref{s1}, we recalled the notion of stable L\'evy processes and the
indices of stability. The concept of the indices of stability  can
be generalized to  general L\'evy process through the so-called
Pruitt indices (see \cite{pruitt}). The \emph{Pruitt indices}, for a
nice Feller process $\process{F}$ with  symbol $q(x,\xi)$, are
defined in the following way
\begin{align*}\beta&:=\sup\left\{\alpha\geq0:\lim_{|\xi|\longrightarrow0}\frac{\sup_{x\in\R^{d}}|q(x,\xi)|}{|\xi|^{\alpha}}=0\right\}\\
\delta&:=\sup\left\{\alpha\geq0:\lim_{|\xi|\longrightarrow0}\frac{\inf_{x\in\R^{d}}\rm{Re}\,\it{q}(\it{x},\xi)}{|\xi|^{\alpha}}=\rm{0}\right\}\end{align*}
(see \cite{rene-holder}). Note that $0\leq\beta\leq\delta$,
$\beta\leq2$ and in the case of a  stable-like process with symbol
given by $q(x,\xi)=\gamma(x)|\xi|^{\alpha(x)}$, we have
$\beta=\underline{\alpha}$ and $\delta=\overline{\alpha}.$  Now, we
 generalize Theorem
  \ref{tm2.10} in terms of the Pruitt indices.
\begin{theorem}\label{tm2.12}Let $\process{F}$ be a   nice Feller process with  symbol $q(x,\xi)$.
\begin{enumerate}
  \item [(i)] If  $d=1$ and $\beta>1$, then  the condition in (\ref{eq:1.3}) holds true.
\item [(ii)] If  $q(x,\xi)$
satisfies the condition in  (\ref{eq:1.4}), then $\delta\leq d$.
\end{enumerate}
\end{theorem}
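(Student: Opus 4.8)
The plan is to deduce both parts directly from the integral estimates in the proof of Corollary \ref{c2.7}, after recording one elementary observation about the Pruitt indices: each of the defining sets
$$S_\beta:=\left\{\alpha\ge0:\lim_{|\xi|\longrightarrow0}\frac{\sup_{x\in\R^{d}}|q(x,\xi)|}{|\xi|^{\alpha}}=0\right\},\qquad S_\delta:=\left\{\alpha\ge0:\lim_{|\xi|\longrightarrow0}\frac{\inf_{x\in\R^{d}}\mathrm{Re}\,q(x,\xi)}{|\xi|^{\alpha}}=0\right\}$$
is an interval of the form $[0,\beta)$, resp. $[0,\delta)$ (possibly with the right endpoint adjoined). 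Indeed, if $\lim_{|\xi|\to0}g(\xi)/|\xi|^{\alpha_0}=0$ for a nonnegative $g$, then writing $|\xi|^{-\alpha}=|\xi|^{-\alpha_0}|\xi|^{\alpha_0-\alpha}$ shows the same limit vanishes for every $\alpha\le\alpha_0$. Hence, for any $\alpha$ strictly below $\beta$ (resp. $\delta$) the corresponding limit is $0$.

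For part (i), since $\beta>1$ I would fix $\alpha$ with $1<\alpha<\beta$. By the observation above $\lim_{|\xi|\longrightarrow0}\sup_{x}|q(x,\xi)|/|\xi|^{\alpha}=0$, so in particular the corresponding $\limsup$ is smaller than any prescribed $c>0$; as $d=1\le\alpha$, Corollary \ref{c2.7} (i) applies verbatim and delivers (\ref{eq:1.3}).

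For part (ii) I would argue by contraposition, assuming $\delta>d$ and contradicting (\ref{eq:1.4}). Fixing $\alpha$ with $d<\alpha<\delta$, the observation gives $\lim_{|\xi|\longrightarrow0}\inf_{x}\mathrm{Re}\,q(x,\xi)/|\xi|^{\alpha}=0$, hence an $r_0>0$ with $\inf_{x\in\R^{d}}\mathrm{Re}\,q(x,\xi)\le|\xi|^{\alpha}$ for $|\xi|<r_0$. Taking reciprocals and integrating in polar coordinates,
$$\int_{\{|\xi|<r\}}\frac{d\xi}{\inf_{x\in\R^{d}}\mathrm{Re}\,q(x,\xi)}\ \ge\ \int_{\{|\xi|<\min(r,r_0)\}}\frac{d\xi}{|\xi|^{\alpha}}\ =\ c_d\int_0^{\min(r,r_0)}\rho^{d-1-\alpha}\,d\rho,$$
which is infinite because $\alpha>d$ forces the exponent $d-1-\alpha<-1$. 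Since this holds for every $r>0$, it contradicts (\ref{eq:1.4}), so $\delta\le d$.

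The argument is essentially routine; the only genuinely structural ingredient is the interval lemma for $S_\beta$ and $S_\delta$. The one point that requires care is in part (ii): the divergence is generated at the origin, so it persists under shrinking $r$, and this is exactly what lets a single choice of $\alpha$ rule out (\ref{eq:1.4}) for all $r>0$ at once rather than for large $r$ only.
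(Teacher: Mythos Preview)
Your proof is correct and follows essentially the same route as the paper's: both parts reduce to choosing an appropriate $\alpha$ (strictly below the relevant Pruitt index but on the correct side of $d$) and invoking the polar-coordinate integral estimate underlying Corollary \ref{c2.7}. Your explicit ``interval lemma'' for $S_\beta$ and $S_\delta$ is exactly the observation the paper uses tacitly when it says ``by the definition of $\beta$ (resp.\ $\delta$)'', and your care in part (ii) about the divergence persisting for all small $r$ matches the paper's ``for all $r>0$ small enough''.
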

\begin{proof} \begin{enumerate}
                \item [(i)] Let $1\leq\alpha<\beta$ be arbitrary. Then, by the definition of
$\beta$,
$$\lim_{|\xi|\longrightarrow0}\frac{\sup_{x\in\R}|q(x,\xi)|}{|\xi|^{\alpha}}=0.$$
Now, the claim easily follows from Corollary \ref{c2.7} (i).
\item[(ii)]
   Let us
assume that this is not the case.  Then, for all
$d\leq\alpha<\delta$, by the definition of $\delta$, we have
$$\lim_{|\xi|\longrightarrow0}\frac{\inf_{x\in\R^{d}}\rm{Re}\,\it{q}(x,\xi)}{|\xi|^{\alpha}}=\rm{0}.$$
By Corollary \ref{c2.7} (i), this yields that
$$\int_{\{|\xi|<r\}}\frac{d\xi}{\inf_{x\in\R^{d}}\rm{Re}\,\it{q}(x,\xi)}\geq\int_{\{|\xi|<r\}}\frac{d\xi}{|\xi|^{\alpha}}=\infty$$
for all $r>0$ small enough.
\end{enumerate}
\end{proof}
Let us remark that, according to Theorem \ref{tm2.10}, in the
dimension $d\geq2$ even under  the condition in (\ref{eq:1.4}) we
can have $\delta\geq1$. Similarly, the recurrence of a
one-dimensional symmetric nice Feller process, in general, does not
imply that $\beta\geq1$. To see this, first recall that for two
symmetric measures $\mu(dy)$ and $\bar{\mu}(dy)$ on
$\mathcal{B}(\R)$ which are finite outside of any neighborhood of
the origin, we say that $\mu(dy)$ has a bigger tail than
$\bar{\mu}(dy)$ if there exists $y_0>0$ such that
$\mu(y,\infty)\geq\bar{\mu}(y,\infty)$ for all $y\geq y_0$. Now, by
\cite[Theorem 38.4]{sato-book}, if $\bar{\nu}(dy)$ is the L\'evy
measure of a transient one-dimensional symmetric  L\'evy process
$\process{\bar{L}}$, then there exists a recurrent one-dimensional
symmetric L\'evy process $\process{L}$ with  L\'evy measure
$\nu(dy)$ having a bigger tail then $\bar{\nu}(dy)$. Further, by
Fubini's theorem, for any $\alpha>0$ we have
\begin{align*}\int_{\{y>y_0\}}y^{\alpha}\nu(dy)&=\int_{\{y>y_0\}}\int_0^{y}\alpha
z^{\alpha-1}dz\,\nu(dy)\\&=y_0^{\alpha}\nu(y_0,\infty)+\alpha\int_{\{z>y_0\}}z^{\alpha-1}\nu(z,\infty)dz\\
&\geq
y_0^{\alpha}\nu(y_0,\infty)+\alpha\int_{\{z>y_0\}}z^{\alpha-1}\bar{\nu}(z,\infty)dz.\end{align*}
Hence, if $\int_{\{y>1\}}y^{\alpha}\bar{\nu}(dy)=\infty$, then
$\int_{\{y>1\}}y^{\alpha}\nu(dy)=\infty.$ Therefore, by
\cite[Prposition 5.4]{rene-holder}, the recurrence of $\process{L}$
does not imply that $\beta\geq1$. Similarly, by \cite[Theorem
38.4]{sato-book} and \cite[Prposition 5.4]{rene-holder}, in the
one-dimensional symmetric case, $\delta\leq1$ does not automatically
imply transience. By assuming certain regularities (convexity and
concavity) on  symbol $q(x,\xi)$, we get the converse of Theorem
\ref{tm2.12} (see  \cite[Theorem 38.2]{sato-book} for the L\'evy
process case).

 \begin{theorem}\label{tm2.13}Let $\process{F}$ be a  nice Feller process with  symbol
 $q(x,\xi)$.
\begin{enumerate}
\item [(i)]If $d=1$,   the function $\xi\longmapsto\sup_{x\in\R}|\it{q}(\it{x},\xi)|$ is radial
and convex on some neighborhood around the origin  and  $q(x,\xi)$
satisfies  the condition in (\ref{eq:1.3}),  then $\beta\geq1$.
\item[(ii)] If  the function
$\xi\longmapsto\sup_{x\in\R^{d}}|\it{q}(\it{x},\xi)|$ is radial and
concave on some neighborhood around the origin,  then $\beta\leq1$.
\item [(iii)]If the function $\xi\longmapsto\inf_{x\in\R^{d}}\rm{Re}\,\it{q}(\it{x},\xi)$ is radial
and concave  on some neighborhood around the origin, $d\geq2$ and
 $\delta<d$, then $q(x,\xi)$ satisfies the condition
in (\ref{eq:1.4}).
\item[(iv)]If the function
$\xi\longmapsto\inf_{x\in\R^{d}}\rm{Re}\,\it{p}(\it{x},\xi)$ is
radial and convex on some neighborhood around the origin, then
$\delta\geq1$.
\end{enumerate}
 \end{theorem}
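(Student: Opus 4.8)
The plan is to reduce all four assertions to a single elementary fact about nonnegative functions that vanish at the origin and are convex (respectively concave) on a neighborhood of it, and then to read off the information encoded in the Pruitt indices $\beta,\delta$ and in the integral conditions (\ref{eq:1.3}) and (\ref{eq:1.4}). Write $Q(\xi):=\sup_{x\in\R^{d}}|q(x,\xi)|$ and $R(\xi):=\inf_{x\in\R^{d}}\mathrm{Re}\,q(x,\xi)$; by condition (\textbf{C3}) we have $Q(0)=R(0)=0$, and by (\textbf{C2}) both are finite. Since each of these functions is assumed radial near the origin, I would pass to their radial profiles $\tilde Q,\tilde R:[0,\rho_0)\to[0,\infty)$, so that $Q(\xi)=\tilde Q(|\xi|)$ and $R(\xi)=\tilde R(|\xi|)$ for $|\xi|<\rho_0$, with $\tilde Q(0)=\tilde R(0)=0$.

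The key lemma I would isolate is a ratio-monotonicity property: if $f:[0,\rho_0)\to[0,\infty)$ satisfies $f(0)=0$ and is convex, then $\rho\mapsto f(\rho)/\rho$ is nondecreasing, so $f(\rho)\le K\rho$ for $\rho<\rho_0$ with $K:=f(\rho_0')/\rho_0'$ for any fixed $\rho_0'$; if instead $f$ is concave and not identically zero near $0$, then $\rho\mapsto f(\rho)/\rho$ is nonincreasing, so $f(\rho)\ge K\rho$ for some $K>0$. Both follow at once by writing $\rho=(\rho/\rho_0')\rho_0'+(1-\rho/\rho_0')\cdot 0$ and invoking the definition of convexity, respectively concavity.

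With this in hand, parts (i), (ii) and (iv) are immediate. For (i), convexity of $\tilde Q$ gives $\tilde Q(\rho)\le K\rho$, hence $Q(\xi)/|\xi|^{\alpha}\le K|\xi|^{1-\alpha}\to0$ as $|\xi|\to0$ for every $\alpha<1$, so $\beta\ge1$; the same computation applied to $\tilde R$ settles (iv), giving $\delta\ge1$. For (ii), concavity of $\tilde Q$ gives $\tilde Q(\rho)\ge K\rho$ with $K>0$, so $Q(\xi)/|\xi|^{\alpha}\ge K|\xi|^{1-\alpha}\to\infty$ for every $\alpha>1$; thus no $\alpha>1$ lies in the set defining $\beta$, and $\beta\le1$. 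For (iii), concavity of $\tilde R$ gives $\tilde R(\rho)\ge K\rho$, and passing to polar coordinates,
$$\int_{\{|\xi|<r\}}\frac{d\xi}{R(\xi)}=\omega_{d-1}\int_0^{r}\frac{\rho^{d-1}}{\tilde R(\rho)}\,d\rho\le\frac{\omega_{d-1}}{K}\int_0^{r}\rho^{d-2}\,d\rho<\infty$$
for $r<\rho_0$, precisely because $d\ge2$; this is the condition in (\ref{eq:1.4}).

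The one place requiring care, and the main (if modest) obstacle, is the nondegeneracy underlying the concave cases: the bound $f(\rho)\ge K\rho$ with $K>0$ needs the profile not to vanish identically near the origin, since a nonnegative concave function that is zero on an initial segment $[0,\rho_1]$ is forced (by monotonicity of its one-sided derivatives) to vanish on all of $[0,\rho_0)$. This is exactly where $\delta<d$ enters in part (iii): if $\tilde R\equiv0$ near $0$ then $R(\xi)/|\xi|^{\alpha}\equiv0$ for all $\alpha$, forcing $\delta=+\infty$ and contradicting $\delta<d$; hence $\tilde R(\rho_0')>0$ for some small $\rho_0'$, and concavity upgrades this to $\tilde R(\rho)\ge(\tilde R(\rho_0')/\rho_0')\,\rho>0$ on $(0,\rho_0']$. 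In part (ii) the analogous nondegeneracy of $\tilde Q$ is guaranteed for any genuinely moving process, in particular under the irreducibility condition (\textbf{C4}). Finally, I would observe that in (i) the conclusion $\beta\ge1$ in fact uses only the convexity, with (\ref{eq:1.3}) recorded to align the statement with Theorem \ref{tm2.12}; comparison with Corollary \ref{c2.7} then shows the characterization via $\beta$ and the integral condition are consistent.
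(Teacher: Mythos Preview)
Your argument is correct and in fact cleaner than the paper's. Both rest on the same convexity/concavity of the radial profile, but you isolate a single ratio-monotonicity lemma ($f(0)=0$ and $f$ convex $\Rightarrow$ $\rho\mapsto f(\rho)/\rho$ nondecreasing; concave $\Rightarrow$ nonincreasing) and apply it uniformly to all four parts. The substantive differences are in (i) and (iii). For (i) the paper argues by contradiction: assuming some $\alpha_0<1$ has $\limsup_{|\xi|\to0}\sup_x|q(x,\xi)|/|\xi|^{\alpha_0}>0$, it uses convexity to upgrade a sequential lower bound $\sup_x|q(x,\xi_n)|\ge c|\xi_n|^{\alpha_0}$ to one valid for all small $\xi$, and then this \emph{lower} bound forces $\int d\xi/\sup_x|q(x,\xi)|<\infty$, contradicting (\ref{eq:1.3}). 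Your direct route instead yields the \emph{upper} bound $\sup_x|q(x,\xi)|\le K|\xi|$, from which $\beta\ge1$ follows immediately; as you observe, hypothesis (\ref{eq:1.3}) is therefore not actually needed for the conclusion of (i). For (iii) the paper picks $\alpha\in(\max\{1,\delta\},d)$, uses the definition of $\delta$ together with concavity (and $\alpha>1$) to obtain $\inf_x\mathrm{Re}\,q(x,\xi)\ge c|\xi|^{\alpha}$, and concludes via Corollary~\ref{c2.7}; you go straight to the linear lower bound $\inf_x\mathrm{Re}\,q(x,\xi)\ge K|\xi|$ and integrate in polar coordinates, invoking $\delta<d$ only to rule out the degenerate case $\tilde R\equiv0$. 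This shows that in (iii) the hypothesis $\delta<d$ could be weakened to $\delta<\infty$ once $d\ge2$ is assumed. Parts (ii) and (iv) are essentially the same in both proofs.
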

\begin{proof}
\begin{enumerate}
\item [(i)] We show that
$$\lim_{|\xi|\longrightarrow0}\frac{\sup_{x\in\R}|q(x,\xi)|}{|\xi|^{\alpha}}=0$$
for all $\alpha<1$.  Let us assume that this is not the case.
                Then, there exists $\alpha_0<1$ such that
                $$\limsup_{|\xi|\longrightarrow0}\frac{\sup_{x\in\R}|q(x,\xi)|}{|\xi|^{\alpha_0}}>c$$ for some $c>0$.
                Hence, there exists a sequence $\{\xi_n\}_{n\in\N}\subseteq\R$
                such that $\lim_{n\longrightarrow\infty}|\xi_n|=0$
                and $$\lim_{n\longrightarrow\infty}\frac{\sup_{x\in\R}|\it{q(x,\xi_n)|}}{|\xi_n|^{\alpha_0}}>c.$$ Thus, $\sup_{x\in\R}|\it{q(x,\xi_n)|}\geq c|\xi_n|^{\alpha_{\rm{0}}}$ for all
                $n\in\N$ large enough. Now, because of the radial symmetry and  convexity assumptions,
$\sup_{x\in\R}              |\it{q(x,\xi)|}\geq
c|\xi|^{\alpha_{\rm{0}}}$ for all
                $|\xi|$ small enough. Indeed, if this was not the case, then
                there would exist a sequence $\{\bar{\xi}_n\}_{n\in\N}\subseteq\R$
                such that $\lim_{n\longrightarrow\infty}|\bar{\xi}_n|=0$
                and $\sup_{x\in\R^{d}}|
                \it{q(x,\bar{\xi}_n)|}<c|\bar{\xi}_n|^{\alpha_{\rm{0}}}$ for
                all $n\in\N$. Now, let $n,m\in\N$ be such that $|\xi_n|<|\bar{\xi}_m|$ and $\sup_{x\in\R}|\it{q(x,\xi_n)|}\geq c|\xi_n|^{\alpha_{\rm{0}}}$.
                Then, for adequately chosen $t\in[0,1]$, we have
                \begin{align*}c|\xi_n|^{\alpha_0}\leq\sup_{x\in\R}|
                \it{q(x,\xi_n)|}=\sup_{x\in\R}|
                \it{q(x,t\bar{\xi}_m|}\leq \it{t}\sup_{x\in\R}|
                \it{q(x,\bar{\xi}_m)}|<
                ct|\bar{\xi}_m|^{\alpha_{\rm{0}}}\leq ct^{\alpha_{\rm{0}}}|\bar{\xi}_m|^{\alpha_{\rm{0}}}=c|\xi_n|^{\alpha_{\rm{0}}},\end{align*}where
                in the third step we took into account the convexity
                property.
 Hence, $\sup_{x\in\R}|
                \it{q(x,\xi)|}\geq c|\xi|^{\alpha_{\rm{0}}}$ for all
                $|\xi|$ small enough. But, according to Proposition \ref{p2.4}, this is in contradiction with
                 the condition in (\ref{eq:1.3}).
\item[(ii)] Let $\varepsilon>0$ be such that
$|\xi|\longmapsto\sup_{x\in\R^{d}}|{q}(\it{x},\xi)|$ is concave on
$[0,\varepsilon)$. Now, for  all $\alpha\geq1$, we have
\begin{align*}\liminf_{|\xi|\longrightarrow0}\frac{\sup_{x\in\R^{d}}
                |\it{q(x,\xi)|}}{|\xi|^{\alpha}}&=\liminf_{|\xi|\longrightarrow0}\frac{\sup_{x\in\R^{d}}
                \left|\it{q\left(x,\frac{\rm{2}|\xi|}{\varepsilon}\frac{\varepsilon\xi}{\rm{2}|\xi|}\right)}\right|}{|\xi|^{\alpha}}\\&\geq \liminf_{|\xi|\longrightarrow0}\frac{2|\xi|^{1-\alpha}}{\varepsilon}\sup_{x\in\R^{d}}
                \left|\it{q\left(x,\frac{\varepsilon\xi}{\rm{2}|\xi|}\right)}\right|>0,\end{align*}
where in the second step we applied the concavity property.
                Now, the desired result follows from the definition
                of the index $\beta$.
 \item [(iii)] Let  $\max\{1,\delta\}<\alpha< d$ be arbitrary.
                By the definition of $\delta$, we have
                $$\limsup_{|\xi|\longrightarrow0}\frac{\inf_{x\in\R^{d}}\rm{Re}\,
                \it{q(x,\xi)}}{|\xi|^{\alpha}}>c$$ for some $c>0$.
                Thus, there exists a sequence $\{\xi_n\}_{n\in\N}\subseteq\R^{d}$
                such that $\lim_{n\longrightarrow\infty}|\xi_n|=0$
                and
                $$\lim_{n\longrightarrow\infty}\frac{\inf_{x\in\R^{d}}\rm{Re}\,
                \it{q(x,\xi_n)}}{|\xi_n|^{\alpha}}>c.$$ In particular,
                $\inf_{x\in\R^{d}}\rm{Re}\,
                \it{q(x,\xi_n)}\geq c|\xi_n|^{\alpha}$ for all
                $n\in\N$ large enough. Actually, because of the radial symmetry and concavity assumptions,
$\inf_{x\in\R^{d}}\rm{Re}\,
                \it{q(x,\xi)}\geq c|\xi|^{\alpha}$ for all
                $|\xi|$ small enough. Indeed,  for all $n\in\N$ large enough and  all $t\in[0,1]$  we have
        \begin{align*}\inf_{x\in\R^{d}}\rm{Re}\,\it{q(x,t\xi_n)}\geq
                t\inf_{x\in\R^{d}}\rm{Re}\,\it{q(x,\xi_n)}\geq ct|\xi_n|^{\alpha}\geq ct^{\alpha}|\xi_n|^{\alpha}=c|t\xi_n|^{\alpha},\end{align*}
where in the first step we took into account the concavity property.
Now, the
                claim is a direct consequence of Corollary \ref{c2.7}
                (ii).
\item[(iv)]  Let $\varepsilon>0$ be such that
$|\xi|\longmapsto\inf_{x\in\R^{d}}\rm{Re}\,\it{q}(\it{x},\xi)$ is
convex on $[0,\varepsilon)$ and let   $\alpha<1$ be arbitrary. Then,
we have
\begin{align*}\limsup_{|\xi|\longrightarrow0}\frac{\inf_{x\in\R^{d}}\rm{Re}\,
                \it{q(x,\xi)}}{|\xi|^{\alpha}}&=\limsup_{|\xi|\longrightarrow0}\frac{\inf_{x\in\R^{d}}\rm{Re}\,
                \it{q\left(x,\frac{\rm{2}|\xi|}{\varepsilon}\frac{\varepsilon\xi}{\rm{2}|\xi|}\right)}}{|\xi|^{\alpha}}\\&\leq
                \limsup_{|\xi|\longrightarrow0}\frac{2\inf_{x\in\R^{d}}\rm{Re}\,
                \it{q\left(x,\frac{\varepsilon\xi}{\rm{2}|\xi|}\right)}}{\varepsilon|\xi|^{\alpha-1}}=0,\end{align*}
where in the second step we employed the convexity property.
                Now, the desired result follows from the definition
                of the index  $\delta.$
\end{enumerate}
\end{proof}
Also, let us remark that the conclusions of Theorem \ref{tm2.13} can
be easily obtained if instead of the convexity and concavity and
radial symmetry assumptions on the functions
$\xi\longmapsto\sup_{x\in\R^{d}}|q(x,\xi)|$ and
$\xi\longmapsto\inf_{x\in\R^{d}}\rm{Re}\,\it{q}(\it{x},\xi)$ we
assume that
$$c^{-1}|\xi|^{\alpha}\leq\sup_{x\in\R^{d}}|q(x,\xi)|\leq
c|\xi|^{\beta}\quad\textrm{and}\quad
c^{-1}|\xi|^{\alpha}\leq\inf_{x\in\R^{d}}\rm{Re}\,\it{q}(\it{x},\xi)|\leq
c|\xi|^{\beta}$$ for all $|\xi|$ small enough and some adequate
$0<\beta\leq\alpha<\infty$ and $c>0$.

\section{Recurrence and transience of one-dimensional  symmetric nice Feller
processes}\label{s3}

\ \ \ \ In this section, we consider the recurrence and transience
property of one-dimensional symmetric nice Feller processes. Note
that in this case Proposition \ref{p2.4}  holds true, that is, the
condition in (\ref{eq:1.3}) does not depend on $r>0$. On the other
hand, recall that if the condition in (\ref{eq:1.4}) holds for some
$r_0>0$, then it also holds for all $r>r_0$. In situations where we
need complete independence of $r>0$, we assume the subadditivity of
the function
$\xi\longmapsto\inf_{x\in\R}\sqrt{\it{q}\left(x,\xi\right)}$ (see
Proposition \ref{p2.4}).

First, we study perturbations of symbols which do not affect the
recurrence and transience property of the underlying Feller process.
\begin{theorem}\label{tm3.1} Let $\process{F^{1}}$ and $\process{F^{2}}$ be
one-dimensional symmetric nice Feller processes with  symbols
$q_1(x,\xi)$ and $q_2(x,\xi)$ and  L\'evy measures $\nu_1(x,dy)$ and
$\nu_2(x,dy),$ respectively. If $q_1(x,\xi)$ satisfies
(\ref{eq:1.3}) and
\begin{align}\label{eq:3.1}\sup_{x\in\R}\int_0^{\infty}
y^{2}|\nu_1(x,dy)-\nu_2(x,dy)|<\infty,\end{align} then $q_2(x,\xi)$
also satisfies  (\ref{eq:1.3}). Further, if $q_1(x,\xi)$ satisfies
(\ref{eq:1.4}),
\be\label{eq:3.2}\lim_{\xi\longrightarrow0}\frac{\inf_{x\in\R}q_1(x,\xi)}{\xi^{2}}=\infty\ee
and  (\ref{eq:3.1}), then $q_2(x,\xi)$ also satisfies (\ref{eq:1.4})
and (\ref{eq:3.2}). Here, $|\mu(dy)|$ denotes the total variation
measure of the signed measure $\mu(dy)$.
\end{theorem}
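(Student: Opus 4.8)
The plan is to reduce everything to the elementary observation that the two symbols differ by at most a fixed multiple of $\xi^{2}$ near the origin. Since $\process{F^{1}}$ and $\process{F^{2}}$ are one-dimensional and symmetric, their symbols are real, nonnegative and even, of the form $q_i(x,\xi)=\frac{1}{2}c_i(x)\xi^{2}+\int_{\R}(1-\cos(\xi y))\nu_i(x,dy)$, $i=1,2$. Using $1-\cos u\le u^{2}/2$, the symmetry of the signed measure $\nu_1-\nu_2$, the boundedness of the coefficients (condition (\textbf{C2})) and hypothesis (\ref{eq:3.1}), I would first establish
\[
\sup_{x\in\R}|q_1(x,\xi)-q_2(x,\xi)|\le K\xi^{2},\qquad K:=\tfrac{1}{2}\|c_1-c_2\|_\infty+\sup_{x\in\R}\int_0^{\infty}y^{2}|\nu_1(x,dy)-\nu_2(x,dy)|.
\]
Writing $Q_i(\xi):=\sup_{x\in\R}|q_i(x,\xi)|$ and $P_i(\xi):=\inf_{x\in\R}q_i(x,\xi)$, this at once gives $|Q_1-Q_2|\le K\xi^{2}$ and $|P_1-P_2|\le K\xi^{2}$ for all $\xi$.

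For the recurrence assertion the essential point is a matching lower bound $Q_1(\xi)\ge c\xi^{2}$ for all small $\xi$, which I would obtain from subadditivity. By \cite[Lemma 3.6.21]{jacobI} each $\xi\longmapsto\sqrt{q_1(x,\xi)}$ is subadditive, hence so is $\xi\longmapsto\sqrt{Q_1(\xi)}=\sup_{x\in\R}\sqrt{q_1(x,\xi)}$; this function is continuous (exactly as in the proof of Proposition \ref{p2.4}) and not identically zero. For a continuous subadditive $h\ge0$ with $h(0)=0$ and $h\not\equiv0$ one has $\liminf_{\xi\longrightarrow0^{+}}h(\xi)/\xi>0$: if $h(\xi_n)/\xi_n\longrightarrow0$ along some $\xi_n\longrightarrow0$, then writing a fixed $t>0$ as $t=m_n\xi_n+s_n$ with $0\le s_n<\xi_n$ and using $h(t)\le m_n h(\xi_n)+h(s_n)$ forces $h(t)=0$, a contradiction. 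Applying this to $h=\sqrt{Q_1}$ yields $Q_1(\xi)\ge c\xi^{2}$ near $0$, whence $Q_2\le Q_1+K\xi^{2}\le(1+K/c)Q_1$ there. Since, by Proposition \ref{p2.4}, the condition (\ref{eq:1.3}) for $q_1$ does not depend on $r$, integrating $1/Q_2\ge(1+K/c)^{-1}/Q_1$ over a small neighborhood of the origin shows that $q_2$ satisfies (\ref{eq:1.3}).

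For the transience assertion the required lower bound is supplied directly by (\ref{eq:3.2}). From $P_2\ge P_1-K\xi^{2}$ together with $P_1(\xi)/\xi^{2}\longrightarrow\infty$ I obtain $P_2(\xi)/\xi^{2}\ge P_1(\xi)/\xi^{2}-K\longrightarrow\infty$, which is precisely (\ref{eq:3.2}) for $q_2$. Moreover (\ref{eq:3.2}) for $q_1$ gives $P_1(\xi)\ge2K\xi^{2}$ on some interval $(0,r_0)$, so there $P_2\ge P_1-K\xi^{2}\ge\frac{1}{2}P_1$; since the condition (\ref{eq:1.4}) for $q_1$ continues to hold on every $(0,r)$ with $r\le r_0$, the bound $1/P_2\le2/P_1$ gives $\int_{\{|\xi|<r_0\}}d\xi/P_2<\infty$, i.e.\ $q_2$ satisfies (\ref{eq:1.4}).

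The main obstacle is the recurrence direction. The comparison $Q_2\le Q_1+K\xi^{2}$ is by itself useless unless $Q_1$ is controlled from below by $\xi^{2}$, and a priori the perturbation $K\xi^{2}$ could dominate $Q_1$ along a sparse sequence $\xi_n\longrightarrow0$, invalidating any integral comparison. The subadditivity of $\sqrt{Q_1}$ is exactly what excludes this pathology, so I expect that step to be the crux of the argument; by contrast the transience direction is comparatively soft, because hypothesis (\ref{eq:3.2}) already furnishes the needed lower bound $P_1\gtrsim\xi^{2}$ and is itself stable under the $O(\xi^{2})$ perturbation.
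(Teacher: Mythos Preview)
Your argument is correct, and the transience half matches the paper's almost verbatim: both bound $|P_1-P_2|\le K\xi^{2}$, invoke (\ref{eq:3.2}) to get $P_2/P_1\longrightarrow1$ (or your equivalent $P_2\ge\tfrac12P_1$ near $0$), and conclude.

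The recurrence half, however, proceeds along a genuinely different line. The paper does \emph{not} establish a universal lower bound $Q_1(\xi)\ge c\xi^{2}$; instead it splits into two cases. If $\sup_{x}\int_{0}^{\infty}y^{2}\nu_1(x,dy)<\infty$, then by (\ref{eq:3.1}) the same holds for $\nu_2$, and Theorem~\ref{tm2.8}(i) applies directly to $q_2$. If that integral is infinite, Fatou's lemma yields the stronger statement $Q_1(\xi)/\xi^{2}\longrightarrow\infty$ (equation~(\ref{eq:3.3})), from which $Q_2/Q_1\longrightarrow1$ follows exactly as in the transience case. Your route avoids this dichotomy entirely: the subadditivity of $\sqrt{Q_1}$ together with continuity and $Q_1\not\equiv0$ gives $\liminf_{\xi\to0}\sqrt{Q_1(\xi)}/|\xi|>0$ in one stroke, which is all that the comparison $Q_2\le(1+K/c)Q_1$ requires. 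This is more economical and self-contained (it does not need to reach back to Theorem~\ref{tm2.8}), at the price of yielding only $Q_1\gtrsim\xi^{2}$ rather than the paper's sharper $Q_1/\xi^{2}\to\infty$ in the unbounded-second-moment case; but the sharper statement is not needed for the conclusion.
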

\begin{proof} Let
$$q_1(x,\xi)=\frac{1}{2}c_1(x)\xi^{2}+\int_\R(1-\cos\xi
y)\nu_1(x,dy)\quad \textrm{and}\quad
q_2(x,\xi)=\frac{1}{2}c_2(x)\xi^{2}+\int_\R(1-\cos\xi
y)\nu_2(x,dy)$$ be the symbols of $\process{F^{1}}$ and
$\process{F^{2}}$, respectively. First, let us prove the recurrence
case. Note that (\ref{eq:3.1}) implies that
$$\sup_{x\in\R}\int_0^{\infty} y^{2}\nu_1(x,dy)<\infty\quad \textrm{if, and only
if,}\quad \sup_{x\in\R}\int_0^{\infty} y^{2}\nu_2(x,dy)<\infty.$$
Indeed, we have
\begin{align*}\int_0^{\infty} y^{2}\nu_1(x,dy)&=\int_0^{\infty}
y^{2}|\nu_1(x,dy)-\nu_2(x,dy)+\nu_2(x,dy)|\\ &\leq\int_0^{\infty}
y^{2}\nu_2(x,dy)+\int_0^{\infty}
y^{2}|\nu_1(x,dy)-\nu_2(x,dy)|\end{align*} and similarly
\begin{align*}\int_0^{\infty}y^{2}\nu_2(x,dy)&\leq\int_0^{\infty}
y^{2}\nu_1(x,dy)+\int_0^{\infty}
y^{2}|\nu_1(x,dy)-\nu_2(x,dy)|.\end{align*} Now, in the case when
$\sup_{x\in\R}\int_0^{\infty} y^{2}\nu_1(x,dy)<\infty$, the claim
easily follows from Theorem \ref{tm2.8} (i). Suppose that
$\sup_{x\in\R}\int_0^{\infty} y^{2}\nu_1(x,dy)=\infty$. Then, by
Fatou's lemma, we have
$$\liminf_{\xi\longrightarrow0}\sup_{x\in\R}\int_0^{\infty}\frac{1-\cos\xi
y}{\xi^{2}}\nu_{1}(x,dy)\geq\liminf_{\xi\longrightarrow0}\int_0^{\infty}\frac{1-\cos\xi
y}{\xi^{2}}\nu_{1}(x,dy)=\frac{1}{2}\int_0^{\infty}y^{2}\nu_{1}(x,dy).$$
Hence,\begin{align}\label{eq:3.3}\lim_{\xi\longrightarrow0}\frac{\sup_{x\in\R}q_1(x,\xi)}{\xi^{2}}=\infty.\end{align}
Next, we have
\begin{align}\label{eq:3.4}&|\sup_{x\in\R}q_1(x,\xi)-\sup_{x\in\R}q_2(x,\xi)|\nonumber\\&\leq\sup_{x\in\R}|q_1(x,\xi)-q_2(x,\xi)|\nonumber\\&\leq
\frac{1}{2}\sup_{x\in\R}|c_1(x)-c_2(x)|\xi^{2}+2\sup_{x\in\R}\left|\int_0^{\infty}
(1-\cos\xi y)\nu_1(x,dy)-\int_0^{\infty} (1-\cos\xi
y)\nu_2(x,dy)\right|\nonumber\\&\leq
\frac{1}{2}\sup_{x\in\R}|c_1(x)-c_2(x)|\xi^{2}+2\sup_{x\in\R}\int_0^{\infty}
(1-\cos\xi y)|\nu_1(x,dy)-\nu_2(x,dy)|\nonumber\\&\leq
\frac{1}{2}\sup_{x\in\R}|c_1(x)-c_2(x)|\xi^{2}+2\xi^{2}\sup_{x\in\R}\int_0^{\infty}
y^{2}|\nu_1(x,dy)-\nu_2(x,dy)|\nonumber\\&\leq
\left(\frac{1}{2}\sup_{x\in\R}|c_1(x)-c_2(x)|+2\sup_{x\in\R}\int_0^{\infty}
y^{2}|\nu_1(x,dy)-\nu_2(x,dy)|\right)\xi^{2},
\end{align} where in the fourth step we used the fact that $1-\cos y\leq y^{2}$
for all $y\in\R.$ Finally, by (\ref{eq:3.3}) and (\ref{eq:3.4}), we
have
$$\lim_{\xi\longrightarrow0}\frac{\sup_{x\in\R}q_2(x,\xi)}{\sup_{x\in\R}q_2(x,\xi)}=1+\lim_{\xi\longrightarrow0}\frac{\sup_{x\in\R}q_2(x,\xi)-\sup_{x\in\R}q_1(x,\xi)}{\sup_{x\in\R}q_1(x,\xi)}=1,$$
which together with  Proposition \ref{p2.4}  proves the claim.

 In the
transience case, we proceed in the similar way. We have
\begin{align}\label{eq:3.5}&|\inf_{x\in\R}q_1(x,\xi)-\inf_{x\in\R}q_2(x,\xi)|\nonumber\\&\leq\sup_{x\in\R}|q_1(x,\xi)-q_2(x,\xi)|\nonumber\\&\leq \left(\frac{1}{2}\sup_{x\in\R}|c_1(x)-c_2(x)|+2\sup_{x\in\R}\int_0^{\infty}
y^{2}|\nu_1(x,dy)-\nu_2(x,dy)|\right)\xi^{2}.
\end{align} Hence, by (\ref{eq:3.2}) and (\ref{eq:3.5}),  we have
$$\lim_{\xi\longrightarrow0}\frac{\inf_{x\in\R}q_2(x,\xi)}{\inf_{x\in\R}q_1(x,\xi)}=1+\lim_{\xi\longrightarrow0}\frac{\inf_{x\in\R}q_2(x,\xi)-\inf_{x\in\R}q_1(x,\xi)}{\inf_{x\in\R}q_1(x,\xi)}=1$$
and
$$\lim_{\xi\longrightarrow0}\frac{\inf_{x\in\R}q_2(x,\xi)}{\xi^{2}}=\infty.$$
Now, by applying Proposition \ref{p2.4}, the claim follows.
\end{proof}
Let us remark that it is easy to see that the condition in
(\ref{eq:3.2}) can be relaxed to the following condition
$$\liminf_{\xi\longrightarrow0}\frac{\inf_{x\in\R}q_1(x,\xi)}{\xi^{2}}>\frac{1}{2}\sup_{x\in\R}|c_1(x)-c_2(x)|+2\sup_{x\in\R}\int_0^{\infty}
y^{2}|\nu_1(x,dy)-\nu_2(x,dy)|.$$ Theorem \ref{tm3.1} essentially
says that, in the one-dimensional symmetric case, the recurrence and
transience property of nice Feller processes depends only on big
jumps.   A situation where the perturbation condition in
(\ref{eq:3.1}) easily holds true is given in the following
proposition.
\begin{proposition}\label{p3.2}
 Let $\process{F^{1}}$ and $\process{F^{2}}$ be
one-dimensional symmetric nice Feller processes with  L\'evy
measures $\nu_1(x,dy)$ and $\nu_2(x,dy)$, respectively. If there
exists $y_0>0$ such that\\ $\nu_1(x,(y,\infty))=\nu_2(x,(y,\infty))$
for all $x\in\R$ and all $y\geq y_0$, then  the condition in
(\ref{eq:3.1}) holds true.
\end{proposition}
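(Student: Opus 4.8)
The plan is to show that the tail-agreement hypothesis forces, for each fixed $x\in\R$, the signed measure $\mu_x(dy):=\nu_1(x,dy)-\nu_2(x,dy)$ to vanish on $(y_0,\infty)$, so that the $y^2$-integral in (\ref{eq:3.1}) only ever sees bounded jump sizes. The key observation is that $\nu_1(x,(y,\infty))=\nu_2(x,(y,\infty))$ for all $y\geq y_0$ says precisely $\mu_x((y,\infty))=0$ for every $y\geq y_0$. Taking differences gives $\mu_x((a,b])=\mu_x((a,\infty))-\mu_x((b,\infty))=0$ for all $y_0\leq a<b$, and since such half-open intervals generate the Borel $\sigma$-algebra of $(y_0,\infty)$, the restriction of $\mu_x$ to $(y_0,\infty)$ vanishes identically. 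Hence its total variation measure $|\mu_x|$ also vanishes there, and
\[
\int_0^\infty y^2\,|\nu_1(x,dy)-\nu_2(x,dy)|=\int_{(0,y_0]} y^2\,|\mu_x|(dy).
\]

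Next I would control this remaining integral uniformly in $x$ by the elementary domination $|\mu_x|=|\nu_1(x,\cdot)-\nu_2(x,\cdot)|\leq\nu_1(x,\cdot)+\nu_2(x,\cdot)$, valid for any difference of nonnegative measures. This reduces matters to proving $\sup_{x\in\R}\int_{(0,y_0]} y^2\,\nu_i(x,dy)<\infty$ for $i=1,2$. Here I would invoke condition (\textbf{C2}), which by \cite[Lemma 2.1]{rene-holder} is equivalent to the boundedness of the symbol coefficients and in particular yields $\sup_{x\in\R}\int_{\R}\min\{1,|y|^2\}\nu_i(x,dy)<\infty$. Splitting $(0,y_0]$ into $(0,1]$ and $(1,y_0]$ (the latter empty if $y_0\leq1$), on $(0,1]$ one has $y^2=\min\{1,y^2\}$, while on $(1,y_0]$ one has $y^2\leq y_0^2=y_0^2\min\{1,y^2\}$; in both cases the integrand is dominated by a constant multiple of $\min\{1,|y|^2\}$, so each piece is bounded by $\max\{1,y_0^2\}\sup_{x\in\R}\int_{\R}\min\{1,|y|^2\}\nu_i(x,dy)<\infty$.

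Combining the two steps gives
\[
\sup_{x\in\R}\int_0^\infty y^2\,|\nu_1(x,dy)-\nu_2(x,dy)|\leq\max\{1,y_0^2\}\sum_{i=1}^{2}\sup_{x\in\R}\int_{\R}\min\{1,|y|^2\}\nu_i(x,dy)<\infty,
\]
which is exactly (\ref{eq:3.1}). The argument is essentially bookkeeping, so I do not expect a genuine obstacle; the only points that require a little care are the measure-theoretic step that vanishing tails force the restricted signed measure itself to be zero (not merely to have zero total mass) and the total-variation bound $|\nu_1-\nu_2|\leq\nu_1+\nu_2$ — both standard, but worth stating cleanly so that the uniform bound via (\textbf{C2}) applies term by term.
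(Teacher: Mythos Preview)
Your argument is correct. The paper does not give a proof of this proposition at all; it simply states the result and moves on, treating it as an immediate observation. Your write-up supplies the details the paper omits: the measure-theoretic step that equal tails on $[y_0,\infty)$ force $\nu_1(x,\cdot)=\nu_2(x,\cdot)$ on $(y_0,\infty)$, together with the bound $|\nu_1-\nu_2|\leq\nu_1+\nu_2$ and the use of condition (\textbf{C2}) to control $\int_{(0,y_0]}y^2\nu_i(x,dy)$ uniformly in $x$.
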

Now, as a simple consequence of Theorem \ref{tm3.1} and Proposition
\ref{p3.2} we can generalize Theorem \ref{tm2.10} (see also
\cite[Theorem 4.6]{bjoern-overshoot}).
\begin{corollary}\label{c3.3} Let $\process{F}$ be a one-dimensional stable-like process with  symbol $q(x,\xi)=\gamma(x)|\xi|^{\alpha(x)}.$\begin{enumerate}
\item [(i)]
                                                               If
                                                               $\liminf_{|x|\longrightarrow\infty}\alpha(x)\geq1$,
                                                               then
                                                               $\process{F}$
                                                               is
                                                               recurrent.
\item[(ii)] If
                                                               $\limsup_{|x|\longrightarrow\infty}\alpha(x)<1$,
                                                               then
                                                               $\process{F}$
                                                               is
                                                               transient.
                                                              \end{enumerate}
\end{corollary}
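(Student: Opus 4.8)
The plan is to reduce both parts to Theorem \ref{tm2.10} by a spatial localization: recurrence and transience of a stable-like process ought to be dictated only by the behaviour of $\alpha(x)$ as $|x|\to\infty$, so I would replace $\process{F}$ by an auxiliary stable-like process whose stability exponent lies in the regime already handled by Theorem \ref{tm2.10}, differing from $\process{F}$ only for $x$ in a compact set, and then carry the conclusion back to $\process{F}$ using the perturbation result Theorem \ref{tm3.1} together with Proposition \ref{p3.2}.

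Concretely, for part (i) the hypothesis $\liminf_{|x|\to\infty}\alpha(x)\geq1$ lets me fix $R>0$ and choose bounded, continuously differentiable functions $\alpha_1,\gamma_1$ with bounded derivatives that agree with $\alpha,\gamma$ for $|x|>R$, satisfy $\inf_x\gamma_1(x)>0$, and satisfy $\inf_x\alpha_1(x)\geq1$; the associated stable-like process $\process{F^1}$ with symbol $q_1(x,\xi)=\gamma_1(x)|\xi|^{\alpha_1(x)}$ is then recurrent by Theorem \ref{tm2.10}(i), so $q_1$ satisfies (\ref{eq:1.3}). For part (ii) I would analogously build $\process{F^2}$ with $\sup_x\alpha_2(x)<1$, transient by Theorem \ref{tm2.10}(ii), so that $q_2$ satisfies (\ref{eq:1.4}); the auxiliary requirement (\ref{eq:3.2}) is then immediate, since $\sup_x\alpha_2(x)<1$ forces $\inf_x q_2(x,\xi)/\xi^2\to\infty$ as $\xi\to0$. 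The positivity condition $\mathrm{Re}\,\Phi_t(0,\xi)\geq0$ needed to invoke Theorem \ref{tm1.2} comes for free from the symmetrization device used in the proof of Theorem \ref{tm2.10}, comparing $\process{F}$ with the stable-like process of symbol $2^{\alpha(x)-1}\gamma(x)|\xi|^{\alpha(x)}$.

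It then remains to transfer the Chung--Fuchs integral from the comparison process to $\process{F}$ via Theorem \ref{tm3.1}, whose hypothesis is the perturbation bound (\ref{eq:3.1}), i.e.\ $\sup_{x\in\R}\int_0^\infty y^2\,|\nu(x,dy)-\nu_i(x,dy)|<\infty$. Because the Lévy measures coincide for $|x|>R$, only the compact range $|x|\leq R$ contributes, and I would seek to certify (\ref{eq:3.1}) through Proposition \ref{p3.2}, arranging that the tails $\nu(x,(y,\infty))$ and $\nu_i(x,(y,\infty))$ agree for $y$ beyond some threshold. This last point is exactly where I expect the real difficulty to concentrate: altering the stability exponent on $|x|\leq R$ changes the tail of the stable-like Lévy measure from order $\gamma(x)|y|^{-\alpha(x)}$ to order $\gamma_i(x)|y|^{-\alpha_i(x)}$, and since both exponents are below $2$ the difference retains an infinite second moment, so a crude exponent change defeats (\ref{eq:3.1}). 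The delicate task is therefore to make the modification act essentially on the small-jump part of $\nu(x,\cdot)$---so that Proposition \ref{p3.2} delivers (\ref{eq:3.1})---while still driving the symbol of the comparison process into the regime of Theorem \ref{tm2.10}; reconciling these two demands, which amounts to proving that the recurrence/transience dichotomy is genuinely insensitive to the values of $\alpha$ on a compact set, is the crux, and is presumably what the comparison with the overshoot technique of \cite{bjoern-overshoot} is meant to secure.
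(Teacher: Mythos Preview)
Your plan is exactly the route the paper indicates (the paper calls Corollary~\ref{c3.3} ``a simple consequence of Theorem~\ref{tm3.1} and Proposition~\ref{p3.2}''), and you have put your finger on the obstruction. But the difficulty you isolate is not merely ``delicate''; it is fatal for this route. The Chung--Fuchs integrand in (\ref{eq:1.3}) involves $\sup_x q(x,\xi)$, and for $|\xi|<1$ and any fixed $x_0$ one has $\gamma(x_0)|\xi|^{\alpha(x_0)}\leq\sup_{x}\gamma(x)|\xi|^{\alpha(x)}\leq\bigl(\sup_x\gamma(x)\bigr)|\xi|^{\underline{\alpha}}$. Hence (\ref{eq:1.3}) holds for the original symbol $q$ if and only if $\underline{\alpha}=\inf_{x\in\R}\alpha(x)\geq1$: as soon as $\alpha(x_0)<1$ at a single point $x_0$, the condition (\ref{eq:1.3}) already \emph{fails} for $q$. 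Since Theorem~\ref{tm3.1} is precisely the statement that (\ref{eq:1.3}) is invariant under the perturbation (\ref{eq:3.1}), no comparison process linked to $\process{F}$ via (\ref{eq:3.1}) can satisfy (\ref{eq:1.3}) either. The analogous computation, with infimum and supremum interchanged, blocks part~(ii) whenever $\sup_x\alpha(x)\geq1$. Your proposed repair---modifying only the small-jump part of $\nu(x,\cdot)$ so that Proposition~\ref{p3.2} applies---cannot help: small jumps contribute $O(\xi^2)$ to the symbol and therefore do not alter the behaviour of $\sup_x q(x,\xi)$ near $\xi=0$.

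There is also a more elementary slip in your construction for part~(i): the hypothesis $\liminf_{|x|\to\infty}\alpha(x)\geq1$ does \emph{not} guarantee $\alpha(x)\geq1$ outside any compact set (consider $\alpha(x)=1-(1+|x|)^{-1}$), so an $\alpha_1$ agreeing with $\alpha$ on $\{|x|>R\}$ need not satisfy $\inf_x\alpha_1\geq1$, and Theorem~\ref{tm2.10}(i) would not apply to $\process{F^1}$. In short, a genuine proof of Corollary~\ref{c3.3} in this generality must leave the Chung--Fuchs framework of this paper---one needs the Foster--Lyapunov drift criteria or the overshoot analysis of \cite{bjoern-overshoot} that you allude to---and the paper's one-line justification is better read as a pointer to those references than as a self-contained argument.
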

Let us remark that by allowing
$\liminf_{|x|\longrightarrow\infty}\alpha(x)=1,$ the above corollary
also generalizes \cite[Theorem 1.3]{sandric-rectrans}, \cite[Theorem
1.3]{sandric-ergodic} and \cite[Theorem 1.1]{sandric-spa}. In the
following theorem, we slightly generalize Proposition \ref{p3.2}.
\begin{theorem} Let $\process{F^{1}}$ and $\process{F^{2}}$ be one-dimensional symmetric nice Feller
processes with symbols $q_1(x,\xi)$ and $q_2(x,\xi)$ and L\'evy
measures $\nu_1(x,dy)$ and $\nu_2(x,dy)$, respectively. Further,
assume that there exists a compact set $C\subseteq\R$ such that
$\nu_1(x,B\cap C^{c})\geq\nu_2(x,B\cap C^{c})$ for all $x\in\R$ and
all $B\in\mathcal{B}(\R)$. If $q_1(x,\xi)$ satisfies (\ref{eq:1.3}),
then $q_2(x,\xi)$ also satisfies (\ref{eq:1.3}). Next, if
$q_2(x,\xi)$ satisfies  (\ref{eq:1.4}) and (\ref{eq:3.2}), then
$q_1(x,\xi)$ also satisfies (\ref{eq:1.4}) and (\ref{eq:3.2}).
\end{theorem}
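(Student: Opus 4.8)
The plan is to reduce both assertions to a single uniform comparison of the two symbols near the origin. Since $\process{F^{1}}$ and $\process{F^{2}}$ are symmetric, their symbols are real and nonnegative,
\[
q_i(x,\xi)=\frac{1}{2}c_i(x)\xi^{2}+\int_\R(1-\cos\xi y)\,\nu_i(x,dy),\qquad i=1,2,
\]
so $\mathrm{Re}\,q_i=q_i$, and (\ref{eq:1.3}) involves $\sup_{x}q_i$ while (\ref{eq:1.4}) and (\ref{eq:3.2}) involve $\inf_{x}q_i$. The key step I would establish first is that there is a finite constant $M\geq0$ with
\[
q_2(x,\xi)\leq q_1(x,\xi)+M\xi^{2}\qquad\text{for all }x,\xi\in\R,
\]
equivalently $q_1(x,\xi)\geq q_2(x,\xi)-M\xi^{2}$. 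To get this I would split the integral in $q_1-q_2$ over $C$ and $C^{c}$. On $C^{c}$ the hypothesis $\nu_1(x,\cdot)\geq\nu_2(x,\cdot)$ together with $1-\cos\xi y\geq0$ forces $\int_{C^{c}}(1-\cos\xi y)\,(\nu_1-\nu_2)(x,dy)\geq0$, which is precisely the term that could otherwise be arbitrarily large. On the compact set $C$ I would use $1-\cos\xi y\leq\frac12\xi^{2}y^{2}$ together with the standing assumption (\textbf{C2}), which gives $\sup_{x}\int_C y^{2}\nu_i(x,dy)<\infty$ and $\|c_i\|_\infty<\infty$; this bounds the diffusion parts and the compact-jump difference by a uniform multiple of $\xi^{2}$, yielding the displayed inequality.

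For the recurrence assertion I would follow the scheme of Theorem \ref{tm3.1}, splitting into two cases according to whether $\sup_{x}\int_0^{\infty}y^{2}\nu_1(x,dy)$ is finite or infinite. If it is finite, then since $\nu_1\geq\nu_2$ on $C^{c}$ and $\sup_{x}\int_C y^{2}\nu_2(x,dy)<\infty$, one also has $\sup_{x}\int_\R y^{2}\nu_2(x,dy)<\infty$, and Theorem \ref{tm2.8}(i) (with $d=1$) directly gives that $q_2$ satisfies (\ref{eq:1.3}). If it is infinite, the Fatou argument of Theorem \ref{tm3.1} yields (\ref{eq:3.3}), i.e. $\sup_{x}q_1(x,\xi)/\xi^{2}\to\infty$ as $\xi\to0$. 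Combined with $\sup_{x}q_2\leq\sup_{x}q_1+M\xi^{2}$ this forces $\sup_{x}q_2(x,\xi)/\sup_{x}q_1(x,\xi)\to1$, so $1/\sup_x q_2\geq \frac12/\sup_x q_1$ for $|\xi|$ small; integrating and using that in the one-dimensional symmetric case (\ref{eq:1.3}) is independent of $r>0$ (Proposition \ref{p2.4}) shows that $q_2$ satisfies (\ref{eq:1.3}).

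For the transience assertion I would argue directly from $\inf_{x}q_1(x,\xi)\geq\inf_{x}q_2(x,\xi)-M\xi^{2}$. Dividing by $\xi^{2}$ and invoking the hypothesis (\ref{eq:3.2}) for $q_2$, namely $\inf_{x}q_2(x,\xi)/\xi^{2}\to\infty$, gives (\ref{eq:3.2}) for $q_1$ at once. The same hypothesis makes $M\xi^{2}/\inf_{x}q_2\to0$, hence $\inf_{x}q_1/\inf_{x}q_2\to1$ and $1/\inf_{x}q_1\leq 2/\inf_{x}q_2$ for all $|\xi|<r'$ with $r'$ small enough; integrating this pointwise bound over $\{|\xi|<r'\}$ and using that $q_2$ satisfies (\ref{eq:1.4}) shows that $q_1$ satisfies (\ref{eq:1.4}) as well.

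The main obstacle I expect is the comparison inequality, specifically controlling the part of $q_1-q_2$ coming from $C$: the hypothesis bounds the measures only on $C^{c}$, so the balance over $C$ must be recovered entirely from (\textbf{C2}), which supplies uniformly bounded second moments of the $\nu_i$ over compact sets and bounded diffusion coefficients. A second, more structural point is that one-directional tail domination does \emph{not} yield an equivalence of finiteness of the second moments; this is why the recurrence argument genuinely needs the case split, falling back on Theorem \ref{tm2.8}(i) in the finite-moment case rather than on a comparison with $q_1$, since there the estimate $\sup_x q_2\le\sup_x q_1+M\xi^2$ alone need not transfer the divergence of the integral.
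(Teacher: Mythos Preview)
Your proposal is correct and follows essentially the same approach as the paper: split the jump part over $C$ and $C^{c}$, use the hypothesis $\nu_1\ge\nu_2$ on $C^{c}$ together with $1-\cos\xi y\ge0$, and control everything over $C$ (and the diffusion coefficients) by a uniform multiple of $\xi^{2}$ via (\textbf{C2}), then invoke (\ref{eq:3.3}) resp.\ (\ref{eq:3.2}) and Proposition~\ref{p2.4}. Your packaging into the single pointwise inequality $q_2(x,\xi)\le q_1(x,\xi)+M\xi^{2}$ is a minor streamlining of the paper's use of the intermediate quantity $\sup_{x}\int_{[-m,m]^{c}}(1-\cos\xi y)\nu_1(x,dy)$, and your explicit case split on $\sup_{x}\int y^{2}\nu_1(x,dy)$ is in fact more careful than the paper, which simply cites (\ref{eq:3.3}) without redoing that distinction.
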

\begin{proof}Let
$$q_1(x,\xi)=\frac{1}{2}c_1(x)\xi^{2}+\int_{\R}(1-\cos\xi
y)\nu_1(x,dy)\quad \textrm{and}\quad
q_2(x,\xi)=\frac{1}{2}c_2(x)\xi^{2}+\int_{\R}(1-\cos\xi
y)\nu_2(x,dy)$$ be the symbols of $\process{F^{1}}$ and
$\process{F^{2}}$, respectively. First, let us prove the recurrence
case.  Let $m>0$ be so large  that $C\subseteq[-m,m]$. We have
\begin{align*}q_2(x,\xi)&=\frac{1}{2}c_2(x)\xi^{2}+\int_{\R}(1-\cos\xi
y)\nu_2(x,dy)\\&=\frac{1}{2}c_2(x)\xi^{2}+\int_{[-m,m]}(1-\cos\xi
y)\nu_2(x,dy)+\int_{[-m,m]^{^{c}}}(1-\cos\xi y)\nu_2(x,dy)\\&\leq
\frac{1}{2}c_2(x)\xi^{2}+\int_{[-m,m]}(1-\cos\xi
y)\nu_2(x,dy)+\int_{[-m,m]^{c}}(1-\cos\xi
y)\nu_1(x,dy)\\&\leq\frac{1}{2}\sup_{x\in\R}c_2(x)\xi^{2}+\xi^{2}\sup_{x\in\R}\int_{[-m,m]}y^{2}\nu_2(x,dy)+\int_{[-m,m]^{c}}(1-\cos\xi
y)\nu_1(x,dy)\\&\leq
\left(\frac{1}{2}\sup_{x\in\R}c_2(x)+\sup_{x\in\R}\int_{[-m,m]}y^{2}\nu_2(x,dy)\right)\xi^{2}+\int_{[-m,m]^{c}}(1-\cos\xi
y)\nu_1(x,dy),\end{align*} where in the fourth step we applied the
fact that $1-\cos y\leq y^{2}$ for all $y\in\R.$ Thus,
$$\sup_{x\in\R}q_2(x,\xi)\leq
c\xi^{2}+\sup_{x\in\R}\int_{[-m,m]^{c}}(1-\cos\xi y)\nu_1(x,dy),$$
where
$$c=\frac{1}{2}\sup_{x\in\R}c_2(x)+\sup_{x\in\R}\int_{[-m,m]}y^{2}\nu_2(x,dy).$$
By the same reasoning, we get
$$\sup_{x\in\R}\int_{[-m,m]^{c}}(1-\cos\xi
y)\nu_1(x,dy)\leq\sup_{x\in\R}q_1(x,\xi)\leq
\bar{c}\xi^{2}+\sup_{x\in\R}\int_{[-m,m]^{c}}(1-\cos\xi
y)\nu_1(x,dy),$$ for some $\bar{c}>0.$ Next, (\ref{eq:3.3}) implies
$$\lim_{\xi\longrightarrow0}\frac{\sup_{x\in\R}\int_{[-m,m]^{c}}(1-\cos\xi y)\nu_1(x,dy)}{\xi^{2}}=\infty,$$
$$\lim_{\xi\longrightarrow0}\frac{\sup_{x\in\R}q_1(x,\xi)}{\sup_{x\in\R}\int_{[-m,m]^{c}}(1-\cos\xi
y)\nu_1(x,dy)}=1\ \ \ $$ and
$$\ \ \ \ \ \lim_{\xi\longrightarrow0}\frac{c\xi^{2}+\sup_{x\in\R}\int_{[-m,m]^{c}}(1-\cos\xi
y)\nu_1(x,dy)}{\sup_{x\in\R}\int_{[-m,m]^{c}}(1-\cos\xi
y)\nu_1(x,dy)}=1.$$ Therefore, by applying Proposition \ref{p2.4},
the claim follows.

Now, we prove the transience case.  Again, let $m>0$ be so large
that $C\subseteq[-m,m]$. Clearly,
\begin{align*}\inf_{x\in\R}q_1(x,\xi)\geq\inf_{x\in\R}\int_{[-m,m]^{c}}(1-\cos\xi y)\nu_1(x,dy)\geq\inf_{x\in\R}\int_{[-m,m]^{c}}(1-\cos\xi y)\nu_2(x,dy)\end{align*}
and
\begin{align*}
q_2(x,\xi)&=\frac{1}{2}c_2(x)\xi^{2}+\int_{\R}(1-\cos\xi
y)\nu_2(x,dy)\\&=\frac{1}{2}c_2(x)\xi^{2}+\int_{[-m,m]}(1-\cos\xi
y)\nu_2(x,dy)+\int_{[-m,m]^{c}}(1-\cos\xi
y)\nu_2(x,dy)\\&\leq\frac{1}{2}\sup_{x\in\R}c_2(x)\xi^{2}+\xi^{2}\sup_{x\in\R}\int_{[-m,m]}y^{2}\nu_2(x,dy)+\sup_{x\in\R}\int_{[-m,m]^{c}}(1-\cos\xi
y)\nu_2(x,dy)\\&\leq
\xi^{2}\left(\frac{1}{2}\sup_{x\in\R}c_2(x)+\sup_{x\in\R}\int_{[-m,m]}y^{2}\nu_2(x,dy)\right)+\sup_{x\in\R}\int_{[-m,m]^{c}}(1-\cos\xi
y)\nu_2(x,dy).\end{align*} Thus,
$$\inf_{x\in\R}\int_{[-m,m]^{c}}(1-\cos\xi y)\nu_2(x,dy)\leq\inf_{x\in\R}q_2(x,\xi)\leq c\xi^{2}+\inf_{x\in\R}\int_{[-m,m]^{c}}(1-\cos\xi
y)\nu_2(x,dy),$$ where
$$c=\frac{1}{2}\sup_{x\in\R}c_2(x)+\sup_{x\in\R}\int_{[-m,m]}y^{2}\nu_2(x,dy).$$
Now, by (\ref{eq:3.2}), we get
$$\lim_{\xi\longrightarrow\infty}\frac{\inf_{x\in\R}\int_{[-m,m]^{c}}(1-\cos\xi
y)\nu_2(x,dy)}{\xi^{2}}=\infty,$$
$$\lim_{\xi\longrightarrow\infty}\frac{\inf_{x\in\R}q_2(x,\xi)}{\inf_{x\in\R}\int_{[-m,m]^{c}}(1-\cos\xi
y)\nu_2(x,dy)}=1\ \ \ $$ and
$$\ \ \ \ \ \lim_{\xi\longrightarrow\infty}\frac{c\xi^{2}+\inf_{x\in\R}\int_{[-m,m]^{c}}(1-\cos\xi
y)\nu_2(x,dy)}{\inf_{x\in\R}\int_{[-m,m]^{c}}(1-\cos\xi
y)\nu_2(x,dy)}=1.$$ Now, the desired result follows from Proposition
\ref{p2.4}.
\end{proof}
In many situations the Chung-Fuchs type conditions in
(\ref{eq:1.3}) and (\ref{eq:1.4})
 are
not operable. More precisely, it is not always easy to compute the
integrals appearing in (\ref{eq:1.3}) and (\ref{eq:1.4}). According
to this, in the sequel we derive  necessary and sufficient
conditions for the recurrence and transience of one-dimensional
symmetric nice Feller processes in terms of the L\'evy measure.
 First, recall that a symmetric Borel measure
$\mu(dy)$ on $\mathcal{B}(\R)$ is \emph{quasi-unimodal} if there
exists $y_0\geq0$ such that $y\longmapsto\mu(y,\infty)$ is a convex
function on $(y_0,\infty)$. Equivalently, a symmetric Borel measure
$\mu(dy)$ on $\mathcal{B}(\R)$ is quasi-unimodal if it is of the
form $\mu(dy)=\mu_0(dy)+f(y)dy,$ where the measure $\mu_0(dy)$ is
supported on $[-y_0,y_0]$, for some $y_0\geq0$, and the density
function $f(y)$ is supported on $[-y_0,y_0]^{c}$, it is symmetric
and decreasing on $(y_0,\infty)$ and
$\int_{y_0+\varepsilon}^{\infty}f(y)dy<\infty$ for every
$\varepsilon>0$ (see \cite[Chapters 5 and 7]{sato-book}). When
$y_0=0$, then $\mu(dy)$ is said to be \emph{unimodal}.
\begin{theorem}\label{tm3.5}
 Let $\process{F}$  be a
one-dimensional symmetric nice Feller process with  symbol
$q(x,\xi)$ and  L\'evy measure $\nu(x,dy)$. Assume that there exists
$x_0\in\R$ such that
\begin{enumerate}
\item [(i)] $\inf_{x\in\R}q(x,\xi)=q(x_0,\xi)$ for all $|\xi|$ small enough
              \item [(ii)] the L\'evy measure $\nu(x_0,dy)$ is
              quasi-unimodal
              \item [(iii)] there exists a
one-dimensional symmetric L\'evy process  $\process{L}$ with symbol
$q(\xi)$ and  L\'evy measure $\nu(dy)$, such that $\nu(x_0,dy)$ has
a bigger tail than $\nu(dy).$
 \end{enumerate}
Then, the transience property of $\process{L}$ implies
(\ref{eq:1.4}).
\end{theorem}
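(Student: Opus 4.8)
The plan is to reduce the condition in (\ref{eq:1.4}) to a one-dimensional integral test involving only the single ``slowest'' symbol $q(x_0,\cdot)$, and then to transfer the transience integral test from $\process{L}$ to $q(x_0,\cdot)$ by comparing the truncated second moments of the two L\'evy measures. First I would invoke assumption (i): since $\process{F}$ is one-dimensional and symmetric, $q(x,\xi)=\frac{1}{2}c(x)\xi^{2}+\int_{\R}(1-\cos\xi y)\,\nu(x,dy)$ is real and nonnegative, and for $|\xi|$ small enough $\inf_{x\in\R}q(x,\xi)=q(x_0,\xi)$. Hence establishing (\ref{eq:1.4}) is the same as showing $\int_{\{|\xi|<r\}}d\xi/q(x_0,\xi)<\infty$ for some $r>0$.

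For a symmetric L\'evy measure $\mu$ with two-sided tail $N_\mu(u):=\mu(\{|y|>u\})$ I would introduce $G_\mu(r):=\int_{\R}\min(y^{2},r^{2})\,\mu(dy)$, which by the layer-cake formula equals $2\int_0^{r}u\,N_\mu(u)\,du$ and is nondecreasing in $r$. The associated symbol $q_\mu(\xi)=\int_{\R}(1-\cos\xi y)\,\mu(dy)$ always satisfies the upper bound $q_\mu(\xi)\le 2|\xi|^{2}G_\mu(1/|\xi|)$, coming from $1-\cos u\le 2\min(1,u^{2})$. Applying this to the L\'evy measure $\nu$ of $\process{L}$ together with the classical Chung-Fuchs criterion in the symmetric one-dimensional L\'evy case (\cite[Theorem 37.5]{sato-book}), the transience of $\process{L}$ gives $\int_{\{|\xi|<r\}}d\xi/q(\xi)<\infty$; after the substitution $u=1/|\xi|$ this turns into $\int^{\infty}du/G_\nu(u)<\infty$, and in particular $G_\nu(u)\to\infty$ as $u\to\infty$. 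Note that on this side no regularity of $\nu$ is required, since only the universal upper bound on $q$ is used.

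Next I would use assumption (iii). Writing $G_{\nu(x_0)}$ and $N_{\nu(x_0)}$ for the analogous quantities built from $\nu(x_0,\cdot)$, the bigger-tail hypothesis $N_{\nu(x_0)}(u)\ge N_\nu(u)$ for $u\ge y_0$ and the representation $G_\mu(r)=2\int_0^r u N_\mu(u)\,du$ yield $G_{\nu(x_0)}(r)\ge G_\nu(r)-C$ for $r\ge y_0$, with the finite constant $C:=2\int_0^{y_0}u\,N_\nu(u)\,du=G_\nu(y_0)$. Since $G_\nu(u)\to\infty$, this forces $G_{\nu(x_0)}(u)\ge\frac{1}{2}G_\nu(u)$ for all $u$ large, hence $\int^{\infty}du/G_{\nu(x_0)}(u)<\infty$. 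It then remains to pass from this tail integral back to $\int d\xi/q(x_0,\xi)$, and this is exactly where assumption (ii) is needed and where the main difficulty lies: for the quasi-unimodal measure $\nu(x_0,\cdot)$ I must prove the matching lower bound $q(x_0,\xi)\ge c\,|\xi|^{2}G_{\nu(x_0)}(1/|\xi|)$ for all $|\xi|$ small and some $c>0$. The near-diagonal part $\int_{\{|y|\le 1/|\xi|\}}(1-\cos\xi y)\,\nu(x_0,dy)\ge c|\xi|^{2}\int_{\{|y|\le 1/|\xi|\}}y^{2}\,\nu(x_0,dy)$ is automatic from $1-\cos u\ge c u^{2}$, but the tail part $\int_{\{|y|>1/|\xi|\}}(1-\cos\xi y)\,\nu(x_0,dy)$ is delicate, because the oscillation of $\cos\xi y$ could in principle cancel the tail mass; this is precisely the resonance mechanism exploited in \cite[Theorem 38.4]{sato-book} for non-unimodal measures. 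The hard part will be to show that quasi-unimodality rules this out: since the density of $\nu(x_0,\cdot)$ is monotone beyond $y_0$ and the period $2\pi/|\xi|$ of $\cos\xi y$ is comparable to the truncation scale $1/|\xi|$, a monotone rearrangement / second mean value argument bounds the tail part below by a constant multiple of $N_{\nu(x_0)}(1/|\xi|)$ (this type of comparison for unimodal L\'evy measures being standard, cf.\ \cite[Chapters 5 and 7]{sato-book} and \cite{rene-holder}). Granting the lower bound, $\int_{\{|\xi|<r\}}d\xi/q(x_0,\xi)\le c^{-1}\int^{\infty}du/G_{\nu(x_0)}(u)<\infty$, which is (\ref{eq:1.4}).
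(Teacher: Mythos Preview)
Your argument is correct in outline, but it takes a considerably longer route than the paper and leaves the genuinely hard step as a sketch. The paper's proof is a three-line reduction: by (i), $\int_{\{|\xi|<r\}}d\xi/\inf_{x}q(x,\xi)=\int_{\{|\xi|<r\}}d\xi/q(x_0,\xi)$; then one observes that $q(x_0,\cdot)$ is itself the symbol of a one-dimensional symmetric L\'evy process, say $\process{F^{0}}$, and applies Sato's comparison theorem \cite[Theorem 38.2]{sato-book} directly to $\process{F^{0}}$ and $\process{L}$: since $\nu(x_0,dy)$ is quasi-unimodal and has a bigger tail than $\nu(dy)$, the transience of $\process{L}$ forces $\process{F^{0}}$ to be transient; finally the Chung--Fuchs criterion \cite[Theorem 37.5]{sato-book} turns transience of $\process{F^{0}}$ into $\int_{\{|\xi|<r\}}d\xi/q(x_0,\xi)<\infty$.

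What you do instead is essentially to \emph{reprove} a quantitative version of \cite[Theorem 38.2]{sato-book} via the truncated second-moment functionals $G_\mu$. Your steps (upper bound $q_\mu(\xi)\le 2|\xi|^{2}G_\mu(1/|\xi|)$, the tail comparison $G_{\nu(x_0)}\ge G_\nu-C$, and the deduction $\int^{\infty}du/G_{\nu(x_0)}(u)<\infty$) are all fine. The point you flag as ``the main difficulty'', namely the lower bound $q(x_0,\xi)\ge c\,|\xi|^{2}G_{\nu(x_0)}(1/|\xi|)$ under quasi-unimodality, is exactly the substance of Sato's comparison theorem; in the paper's framework it is obtained (see the proof of Theorem~\ref{tm3.9}, Step~2) by passing to a unimodal modification, writing it as the law of $UF_{x_0}$ with $U$ uniform on $[0,1]$ via \cite[Exercise 29.21, Lemma 38.6]{sato-book}, and using $1-\frac{\sin y}{y}\ge \bar c\,\min\{1,y^{2}\}$. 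Your ``monotone rearrangement / second mean value'' sketch is headed in the right direction but is not yet a proof; if you want to keep your self-contained route you should carry out this step explicitly, otherwise simply citing \cite[Theorem 38.2]{sato-book} collapses your whole argument to the paper's.
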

\begin{proof} By Theorem \ref{tm1.3}, it suffices to prove that
$$\int_{\{|\xi|<r\}}\frac{d\xi}{\inf_{x\in\R}q(x,\xi)}=\int_{\{|\xi|<r\}}\frac{d\xi}{q(x_0,\xi)}<\infty$$
for some $r>0$. Let $\process{F^{0}}$ be a L\'evy process with
symbol $q(x_0,\xi)$. Now, by \cite[Theorem 38.2]{sato-book},
$\process{F^{0}}$ is transient. Hence, by \cite[Theorem
37.5]{sato-book},
$$\int_{\{|\xi|<r\}}\frac{d\xi}{q(x_0,\xi)}<\infty$$ for all $r>0.$
\end{proof}
As a direct consequence of the above result we get the following
corollary.
\begin{corollary}\label{c3.6}
 Let $\process{F}$  be a
one-dimensional symmetric nice Feller process with  symbol
$q(x,\xi)$ and  L\'evy measure $\nu(x,dy)$. Assume that there exists
$x_0\in\R$ such that
\begin{enumerate}
\item [(i)] $\sup_{x\in\R}q(x,\xi)=q(x_0,\xi)$ for all $|\xi|$ small enough
              \item [(ii)] there exists a
one-dimensional symmetric L\'evy process  $\process{L}$ with symbol
$q(\xi)$ and  L\'evy measure $\nu(dy)$, such that $\nu(dy)$ is
quasi-unimodal and has a bigger tail than $\nu(x_0,dy).$
 \end{enumerate}
Then, the recurrence property of $\process{L}$ implies
(\ref{eq:1.3}).
\end{corollary}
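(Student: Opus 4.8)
The plan is to mirror the proof of Theorem \ref{tm3.5}, interchanging the roles of recurrence and transience. Since $\process{F}$ is one-dimensional and symmetric, its symbol is real and nonnegative, so $|q(x,\xi)|=q(x,\xi)$ and the target condition (\ref{eq:1.3}) reads $\int_{\{|\xi|<r\}}d\xi/\sup_{x\in\R}q(x,\xi)=\infty$ for some $r>0$. By hypothesis (i) I would first choose $r>0$ small enough that $\sup_{x\in\R}q(x,\xi)=q(x_0,\xi)$ for all $|\xi|<r$, which reduces the whole problem to showing that $\int_{\{|\xi|<r\}}d\xi/q(x_0,\xi)=\infty$. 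Note that the conclusion is only the divergence (\ref{eq:1.3}), not recurrence of $\process{F}$ itself, so no appeal to Theorem \ref{tm1.2} or to the sign condition on $\rm{Re}\,\Phi_t(0,\xi)$ is needed at this stage.

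Next I would freeze the symbol at $x_0$ and let $\process{F^{0}}$ denote the L\'evy process whose characteristic exponent is $q(x_0,\xi)$; by the L\'evy--Khintchine representation of $\xi\longmapsto q(x,\xi)$ for each fixed $x$, this is a genuine symmetric L\'evy process with L\'evy measure $\nu(x_0,dy)$. The key step is the comparison result \cite[Theorem 38.2]{sato-book}: by hypothesis (ii) the measure $\nu(dy)$ is quasi-unimodal and has a bigger tail than $\nu(x_0,dy)$, and $\process{L}$ (with L\'evy measure $\nu(dy)$) is recurrent by assumption. In the formulation used in the proof of Theorem \ref{tm3.5}, quasi-unimodality together with a bigger tail propagates transience from the smaller-tailed process up to the bigger-tailed one; taking the contrapositive, recurrence of the bigger-tailed quasi-unimodal process $\process{L}$ forces recurrence of the smaller-tailed process $\process{F^{0}}$.

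Finally, I would invoke the Chung--Fuchs criterion for L\'evy processes \cite[Theorem 37.5]{sato-book}: recurrence of $\process{F^{0}}$ is equivalent to $\int_{\{|\xi|<r\}}d\xi/q(x_0,\xi)=\infty$ for all $r>0$, and combined with the reduction of the first step this is exactly (\ref{eq:1.3}). The only genuinely delicate point is getting the direction of \cite[Theorem 38.2]{sato-book} right --- which of the two measures must be quasi-unimodal and which must carry the bigger tail --- and observing that, relative to Theorem \ref{tm3.5}, the roles of the two processes are swapped, so that it is the contrapositive of the comparison (recurrence passing downward in tail size) that is needed here. Everything else is the routine reduction already carried out in the proof of Theorem \ref{tm3.5}.
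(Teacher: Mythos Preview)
Your proposal is correct and is exactly the argument the paper has in mind: the paper states Corollary \ref{c3.6} as ``a direct consequence'' of Theorem \ref{tm3.5} with no separate proof, meaning one mirrors that proof with the roles of $\process{L}$ and $\process{F^{0}}$ swapped and uses the contrapositive of \cite[Theorem 38.2]{sato-book}, precisely as you do. Your identification of the only delicate point---which measure must be quasi-unimodal and carry the bigger tail---is also correct.
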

For the necessity of the quasi-unimodality assumption in Theorem
\ref{tm3.5} and Corollary \ref{c3.6} see
 \cite[Theorem 38.4]{sato-book}. Explicit examples of  one-dimensional symmetric nice Feller
processes which satisfy the conditions in Theorem \ref{tm3.5} and
Corollary \ref{c3.6} can be easily constructed in the classes of
stable-like processes and  Feller processes  obtained by variable
order subordination (see Section \ref{s2}).

\begin{theorem} \label{tm3.7}Let $\process{F}$  be a
one-dimensional symmetric nice Feller process with  symbol
$$q(x,\xi)=\frac{1}{2}c(x)\xi^{2}+2\int_0^{\infty}(1-\cos\xi
y)\nu(x,dy).$$ Let us define
$$R\left(x,r,y\right):=\nu\left(x,\bigcup_{n=0}^{\infty}(2nr+y,2(n+1)r-y]\right),$$
for $x\in\R$ and $r\geq y\geq0.$ Then, for arbitrary $\rho>0$,
\begin{align}\label{eq:3.6}\int_\rho^{\infty}\left(\sup_{x\in\R}\int_0^{r}yR(x,r,y)dy\right)^{-1}dr=\infty\end{align}
if, and only if, (\ref{eq:1.3}) holds true. Further,  under
 (\ref{eq:3.2}), (\ref{eq:1.4}) holds true if, and only if,
\begin{align}\label{eq:3.7}\int_\rho^{\infty}\left(\inf_{x\in\R}\int_0^{r}yR(x,r,y)dy\right)^{-1}dr<\infty\end{align}
holds for all $\rho>0$ large enough.
\end{theorem}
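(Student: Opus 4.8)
The plan is to convert the real-space tests (\ref{eq:3.6}) and (\ref{eq:3.7}) into the frequency-space Chung--Fuchs conditions (\ref{eq:1.3}) and (\ref{eq:1.4}) by an explicit change of variables. Throughout I use that, in the one-dimensional symmetric case, the symbol is real and nonnegative, so $|q(x,\xi)|=q(x,\xi)$. First I would unfold $R(x,r,y)$ by Fubini. For fixed $r$ and $z>0$, one checks that $z\in\bigcup_{n\geq0}(2nr+y,2(n+1)r-y]$ exactly when $0\le y<d_r(z)$, where $d_r(z):=\operatorname{dist}(z,2r\ZZ_{\ge0})$ is the distance from $z$ to the nearest nonnegative even multiple of $r$. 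Interchanging the $dy$-integral with $\nu(x,dz)$ then gives the identity
$$\int_0^r yR(x,r,y)\,dy=\int_{(0,\infty)}\Big(\int_0^{d_r(z)}y\,dy\Big)\nu(x,dz)=\frac12\int_{(0,\infty)}d_r(z)^2\,\nu(x,dz),$$
where I used $d_r(z)\le r$ so that the inner upper limit is $d_r(z)$.

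The heart of the argument is a uniform comparison of this quantity with the jump part $q_J(x,\xi):=2\int_0^\infty(1-\cos\xi y)\nu(x,dy)$ of the symbol evaluated at $\xi=\pi/r$. The substitution $u=\pi z/r$ turns $d_r(z)$ into $(r/\pi)\operatorname{dist}(u,2\pi\ZZ)$, and since $\operatorname{dist}(u,2\pi\ZZ)^2$ and $1-\cos u$ are both $2\pi$-periodic, vanish to exactly second order on $2\pi\ZZ$, and are continuous and positive in between, their ratio is bounded above and below by positive constants on $[0,2\pi]$. This yields universal constants $0<a_1\le a_2<\infty$, independent of $x$ and $r$, with
$$a_1\,q_J(x,\pi/r)\le \frac{2}{r^2}\int_0^r yR(x,r,y)\,dy\le a_2\,q_J(x,\pi/r).$$
Taking $\sup_x$ (respectively $\inf_x$) preserves these bounds, and the substitution $\xi=\pi/r$, $dr/r^2=-d\xi/\pi$, sends $\int_\rho^\infty(\sup_x\int_0^r yR\,dy)^{-1}dr$ to a constant multiple of $\int_{\{|\xi|<\pi/\rho\}}(\sup_x q_J(x,\xi))^{-1}d\xi$, and similarly for the infimum. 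Hence (\ref{eq:3.6}) is equivalent to the divergence of $\int_{\{|\xi|<\pi/\rho\}}d\xi/\sup_x q_J(x,\xi)$, and (\ref{eq:3.7}) to the convergence of $\int_{\{|\xi|<\pi/\rho\}}d\xi/\inf_x q_J(x,\xi)$.

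It remains to replace the jump part $q_J$ by the full symbol $q(x,\xi)=\frac12 c(x)\xi^2+q_J(x,\xi)$ and to settle the quantifiers on $r$ and $\rho$. Since $0\le\frac12 c(x)\xi^2\le\frac12\|c\|_\infty\xi^2$ by (\textbf{C2}), one has $q_J\le q\le q_J+\frac12\|c\|_\infty\xi^2$ pointwise in $x$, hence also after taking $\sup_x$ or $\inf_x$. In the recurrence case I would split on whether $\sup_x\int_0^\infty y^2\nu(x,dy)$ is finite or infinite: in the finite case (\ref{eq:1.3}) holds by Theorem \ref{tm2.8}(i) and (\ref{eq:3.6}) holds because $d_r(z)^2\le z^2$ keeps $\int_0^r yR\,dy$ bounded in $r$, so both sides are automatic; in the infinite case the Fatou argument giving (\ref{eq:3.3}) shows $\sup_x q_J(x,\xi)/\xi^2\to\infty$, so the Gaussian term is negligible and $\sup_x q$ is comparable to $\sup_x q_J$ near the origin. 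In the transience case the standing hypothesis (\ref{eq:3.2}) forces $\inf_x q(x,\xi)/\xi^2\to\infty$, so $\inf_x q$ is comparable to $\inf_x q_J$ near the origin. Finally, large $\rho$ corresponds to small $\xi=\pi/\rho$, so ``for all $\rho$ large enough'' in (\ref{eq:3.7}) matches ``for some $r$'' in (\ref{eq:1.4}) (recall that (\ref{eq:1.4}) for some $r_0$ persists for all smaller $r$), while the $r$-independence of (\ref{eq:1.3}) in the one-dimensional symmetric case (Proposition \ref{p2.4} and the opening remark of this section) lets me read off (\ref{eq:1.3}) from the equivalence. The hard part will be establishing the two-sided comparison with constants genuinely uniform in both $r$ and $x$ --- that is, verifying that the combinatorial folding in $R(x,r,y)$ reproduces, up to fixed factors, the oscillatory integral defining $q_J(x,\pi/r)$ --- together with the bookkeeping needed to discard the Gaussian term in the degenerate regime.
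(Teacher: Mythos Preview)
Your argument is correct and is in fact somewhat cleaner than the paper's. The paper reaches the two-sided comparison between $q_J(x,\xi)$ and $\int_0^{r} yR(x,r,y)\,dy$ (with $r=\pi/\xi$) by first integrating by parts to obtain $q_J(x,\xi)=2\xi\int_0^\infty N(x,y)\sin\xi y\,dy$, then splitting each period of the sine into four pieces and regrouping them as $R$- and $\bar R$-terms; this yields the bound \eqref{eq:3.8} with upper limit $\pi/(2\xi)=r/2$, and an additional inequality $\int_0^{r}yR\,dy\le 5\int_0^{r/2}yR\,dy$ is then needed to pass to the full range $[0,r]$. Your Fubini identity $\int_0^r yR(x,r,y)\,dy=\tfrac12\int_{(0,\infty)}d_r(z)^2\,\nu(x,dz)$ together with the elementary periodic comparison $\operatorname{dist}(u,2\pi\ZZ)^2\asymp 1-\cos u$ produces the two-sided estimate $\int_0^r yR\,dy\asymp r^2 q_J(x,\pi/r)$ directly, with explicit constants, so both implications come at once and the factor-of-$5$ step is unnecessary. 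After that the two proofs coincide: the same substitution $\xi=\pi/r$, the same case split on $\sup_x\int y^2\nu(x,dy)$ finite/infinite to absorb the Gaussian term via \eqref{eq:3.3}, the same use of \eqref{eq:3.2} on the transience side, and the same appeal to Proposition~\ref{p2.4} for the $r$-independence of \eqref{eq:1.3}. What you flag as ``the hard part'' is exactly the pointwise comparison you already established; the uniformity in $x$ and $r$ is automatic because the constants come from the universal ratio on one period.
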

\begin{proof} We follow the proof of \cite[Theorem 38.3]{sato-book}.
Let us denote $N(x,y):=\nu(x,(y,\infty)),$ for $x\in\R$ and
$y\geq0$. Then, we have
\begin{align*}q(x,\xi)-\frac{1}{2}c(x)\xi^{2}&=2\int_0^{\infty}(1-\cos\xi
y)\nu(x,dy)\\&=2\int_0^{\infty}(1-\cos\xi
y)d(-N(x,y))\\&=2\xi\int_0^{\infty}N(x,y)\sin\xi
y\,dy\\&=2\xi\sum_{n=0}^{\infty}\int_0^{\frac{2\pi}{\xi}}N\left(x,\frac{2\pi
n}{\xi}+y\right)\sin\xi
y\,dy\\&=2\xi\sum_{n=0}^{\infty}\left(I_{n,1}+I_{n,2}+I_{n,3}+I_{n,4}\right),\end{align*}
where in the third step we applied the integration by parts formula
and in the final step we wrote
\begin{align*}I_{n,1}&=\int_0^{\frac{\pi}{2\xi}}N\left(x,\frac{2\pi
n}{\xi}+y\right)\sin\xi y\,dy,\\
I_{n,2}&=\int_{\frac{\pi}{2\xi}}^{\frac{\pi}{\xi}}N\left(x,\frac{2\pi
n}{\xi}+y\right)\sin\xi
y\,dy=\int_{0}^{\frac{\pi}{2\xi}}N\left(x,\frac{2\pi
n}{\xi}+\frac{\pi}{\xi}-y\right)\sin\xi y\,dy,\\
I_{n,3}&=\int_{\frac{\pi}{\xi}}^{\frac{3\pi}{2\xi}}N\left(x,\frac{2\pi
n}{\xi}+y\right)\sin\xi
y\,dy=-\int_{0}^{\frac{\pi}{2\xi}}N\left(x,\frac{2\pi
n}{\xi}+\frac{\pi}{\xi}+y\right)\sin\xi y\,dy\end{align*} and
$$\ \ I_{n,4}=\int_{\frac{3\pi}{2\xi}}^{\frac{2\pi}{\xi}}N\left(x,\frac{2\pi
n}{\xi}+y\right)\sin\xi
y\,dy=-\int_{0}^{\frac{\pi}{2\xi}}N\left(x,\frac{2\pi
n}{\xi}+\frac{2\pi}{\xi}-y\right)\sin\xi y\,dy.$$ Thus,
$$I_{n,1}+I_{n,4}=\int_0^{\frac{\pi}{2\xi}}\nu\left(x,\left(\frac{2\pi
n}{\xi}+y,\frac{2\pi(n+1)}{\xi}-y\right]\right)\sin\xi y\,dy\ \ \ \
\ \ \ \ $$ and
$$I_{n,2}+I_{n,3}=\int_0^{\frac{\pi}{2\xi}}\nu\left(x,\left(\frac{\pi
(2n+1)}{\xi}-y,\frac{\pi(2n+1)}{\xi}+y\right]\right)\sin\xi y\,dy.$$
Now, by defining
$$\bar{R}(x,r,y):=\nu\left(x,\bigcup_{n=0}^{\infty}((2n+1)r-y,(2n+1)r+y]\right),$$
we have
$$q(x,\xi)-\frac{1}{2}c(x)\xi^{2}=2\xi\left(\int_0^{\frac{\pi}{2\xi}}R\left(x,\frac{\pi}{\xi},y\right)\sin\xi y\,dy+\int_0^{\frac{\pi}{2\xi}}\bar{R}\left(x,\frac{\pi}{\xi},y\right)\sin\xi
y\,dy\right).$$ Further, note that
$$R\left(x,\frac{\pi}{\xi},y\right)\geq
\bar{R}\left(x,\frac{\pi}{\xi},y\right)\geq0,\quad
y\in\left(0,\frac{\pi}{2\xi}\right],$$ and $$\frac{2y}{\pi}\leq\sin
y\leq y,\quad y\in\left(0,\frac{\pi}{2}\right].$$ This yields
\begin{align}\label{eq:3.8}\frac{4}{\pi}\xi^{2}\int_0^{\frac{\pi}{2\xi}}yR\left(x,\frac{\pi}{\xi},y\right)dy\leq
q(x,\xi)-
\frac{1}{2}c(x)\xi^{2}\leq4\xi^{2}\int_0^{\frac{\pi}{2\xi}}yR\left(x,\frac{\pi}{\xi},y\right)dy.\end{align}
Next,  we have
\begin{align*}&\int_0^{\frac{\pi}{\rho}}\left(\xi^{2}\sup_{x\in\R}\int_0^{\frac{\pi}{\xi}}yR\left(x,\frac{\pi}{\xi},y\right)dy\right)^{-1}d\xi=\frac{1}{\pi}\int_\rho^{\infty}\left(\sup_{x\in\R}\int_0^{r}yR\left(x,r,y\right)dy\right)^{-1}dr\end{align*} and
\begin{align*}&\int_0^{\frac{\pi}{\rho}}\left(\xi^{2}\inf_{x\in\R}\int_0^{\frac{\pi}{\xi}}yR\left(x,\frac{\pi}{\xi},y\right)dy\right)^{-1}d\xi=\frac{1}{\pi}\int_\rho^{\infty}\left(\inf_{x\in\R}\int_0^{r}yR\left(x,r,y\right)dy\right)^{-1}dr,\end{align*}
where we made the substitution $\xi\longmapsto\pi/r$. Thus,
(\ref{eq:3.6}) implies
$$\int_0^{\frac{\pi}{\rho}}\frac{d\xi}{\sup_{x\in\R}\left(q(x,\xi)-\frac{1}{2}c(x)\xi^{2}\right)}=\infty$$ and
$$\int_0^{\frac{\pi}{\rho}}\frac{d\xi}{\inf_{x\in\R}\left(q(x,\xi)-\frac{1}{2}c(x)\xi^{2}\right)}<\infty$$ implies (\ref{eq:3.7}).
Finally,  from (\ref{eq:3.2}) and (\ref{eq:3.3}), we have
\be\label{eq:3.9}\lim_{\xi\longrightarrow0}\frac{\sup_{x\in\R}q(x,\xi)}{\sup_{x\in\R}\left(q(x,\xi)-\frac{1}{2}c(x)\xi^{2}\right)}=\lim_{\xi\longrightarrow0}\frac{\inf_{x\in\R}q(x,\xi)}{\inf_{x\in\R}\left(q(x,\xi)-\frac{1}{2}c(x)\xi^{2}\right)}=1.\ee
Now, the claim follows from Proposition \ref{p2.4}.

To prove the converse, first note that
\begin{align*}&\int_{0}^{\frac{\pi}{\xi}}y\nu\left(x,\left(\frac{2n\pi}{\xi}+y,\frac{2(n+1)\pi}{\xi}-y\right]\right)dy\\&=
\int_{0}^{\frac{\pi}{2\xi}}y\nu\left(x,\left(\frac{2n\pi}{\xi}+y,\frac{2(n+1)\pi}{\xi}-y\right]\right)dy+\int_{\frac{\pi}{2\xi}}^{\frac{\pi}{\xi}}y\nu\left(x,\left(\frac{2n\pi}{\xi}+y,\frac{2(n+1)\pi}{\xi}-y\right]\right)dy\\&=
\int_{0}^{\frac{\pi}{2\xi}}y\nu\left(x,\left(\frac{2n\pi}{\xi}+y,\frac{2(n+1)\pi}{\xi}-y\right]\right)dy+4\int_{\frac{\pi}{4\xi}}^{\frac{\pi}{2\xi}}y\nu\left(x,\left(\frac{2n\pi}{\xi}+2y,\frac{2(n+1)\pi}{\xi}-2y\right]\right)dy\\&\leq
\int_{0}^{\frac{\pi}{2\xi}}y\nu\left(x,\left(\frac{2n\pi}{\xi}+y,\frac{2(n+1)\pi}{\xi}-y\right]\right)dy+4\int_{\frac{\pi}{4\xi}}^{\frac{\pi}{2\xi}}y\nu\left(x,\left(\frac{2n\pi}{\xi}+y,\frac{2(n+1)\pi}{\xi}-y\right]\right)dy\\&\leq
5\int_{0}^{\frac{\pi}{2\xi}}y\nu\left(x,\left(\frac{2n\pi}{\xi}+y,\frac{2(n+1)\pi}{\xi}-y\right]\right)dy.\end{align*}
Hence,
$$\int_{0}^{\frac{\pi}{\xi}}yR\left(x,\frac{\pi}{\xi},y\right)dy\leq5\int_{0}^{\frac{\pi}{2\xi}}yR\left(x,\frac{\pi}{\xi},y\right)dy,$$ that is,
\begin{align*}
\int_\rho^{\infty}\left(\sup_{x\in\R}\int_0^{r}yR\left(x,r,y\right)dy\right)^{-1}dr
&=\int_0^{\frac{\pi}{\rho}}\left(\xi^{2}\sup_{x\in\R}\int_0^{\frac{\pi}{\xi}}yR\left(x,\frac{\pi}{\xi},y\right)dy\right)^{-1}d\xi\\
&\geq\frac{1}{5}\int_0^{\frac{\pi}{\rho}}\left(\xi^{2}\sup_{x\in\R}\int_0^{\frac{\pi}{2\xi}}yR\left(x,\frac{\pi}{\xi},y\right)dy\right)^{-1}d\xi\end{align*}
and
\begin{align*}
\int_\rho^{\infty}\left(\inf_{x\in\R}\int_0^{r}yR\left(x,r,y\right)dy\right)^{-1}dr
&=\int_0^{\frac{\pi}{\rho}}\left(\xi^{2}\inf_{x\in\R}\int_0^{\frac{\pi}{\xi}}yR\left(x,\frac{\pi}{\xi},y\right)dy\right)^{-1}d\xi\\
&\geq\frac{1}{5}\int_0^{\frac{\pi}{\rho}}\left(\xi^{2}\inf_{x\in\R}\int_0^{\frac{\pi}{2\xi}}yR\left(x,\frac{\pi}{\xi},y\right)dy\right)^{-1}d\xi,\end{align*}
where in the first steps we applied the substitution
$\xi\longmapsto\pi/r$. Thus, (\ref{eq:1.3}) and (\ref{eq:3.7}), by
using (\ref{eq:3.2}), (\ref{eq:3.3}), (\ref{eq:3.8}), (\ref{eq:3.9})
and Proposition \ref{p2.4}, imply (\ref{eq:3.6}) and (\ref{eq:1.4}),
respectively.
\end{proof}

As a consequence of  Theorem \ref{tm3.7}, we also get the following
characterization of the recurrence and transience  in terms of the
tail behavior of the L\'evy measure.
\begin{corollary}\label{c3.8} Let $\process{F}$  be a
one-dimensional symmetric nice Feller process with  symbol
$q(x,\xi)$ and  L\'evy measure $\nu(x,dy)$. Let us define
$$N\left(x,y\right):=\nu\left(x,(y,\infty)\right),$$
for $x\in\R$ and $y\geq0.$  Then, for  arbitrary $\rho>0$,
\begin{align}\label{eq:3.10}\int_\rho^{\infty}\left(\sup_{x\in\R}\int_0^{r}yN(x,y)dy\right)^{-1}dr=\infty\end{align}
implies (\ref{eq:1.3}), and (\ref{eq:1.4}) implies
\begin{align}\label{eq:3.11}\int_\rho^{\infty}\left(\inf_{x\in\R}\int_0^{r}yN(x,y)dy\right)^{-1}dr<\infty\end{align} for  all $\rho>0$ large
enough.
\end{corollary}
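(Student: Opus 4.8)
The plan is to deduce both assertions from Theorem \ref{tm3.7} by comparing the ``folded'' tail quantity $R(x,r,y)$ appearing there with the genuine tail $N(x,y)$. The one computation that does all the work is the pointwise bound
$$R(x,r,y)\leq N(x,y),\qquad x\in\R,\ r\geq y\geq0,$$
which I would obtain by noting that the sets $(2nr+y,2(n+1)r-y]$, $n\geq0$, are pairwise disjoint and all contained in $(y,\infty)$; hence their union is a subset of $(y,\infty)$, and monotonicity of the measure $\nu(x,\cdot)$ gives $R(x,r,y)=\nu(x,\bigcup_{n\geq0}(2nr+y,2(n+1)r-y])\leq\nu(x,(y,\infty))=N(x,y)$.

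For the recurrence statement, I would multiply this inequality by $y$, integrate in $y$ over $[0,r]$, and take the supremum over $x$, obtaining $\sup_{x\in\R}\int_0^r yR(x,r,y)\,dy\leq\sup_{x\in\R}\int_0^r yN(x,y)\,dy$. Passing to reciprocals reverses the inequality, and integrating in $r$ over $[\rho,\infty)$ yields $\int_\rho^\infty(\sup_{x}\int_0^r yR\,dy)^{-1}dr\geq\int_\rho^\infty(\sup_{x}\int_0^r yN\,dy)^{-1}dr$. Consequently, if (\ref{eq:3.10}) holds then the left-hand side is infinite, i.e. (\ref{eq:3.6}) holds, and the recurrence half of Theorem \ref{tm3.7} then delivers (\ref{eq:1.3}).

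The transience statement runs along the same lines, with $\inf_x$ replacing $\sup_x$: since $\int_0^r yR(x,r,y)\,dy\leq\int_0^r yN(x,y)\,dy$ holds for every fixed $x$, the pointwise bound passes to the infima, $\inf_{x}\int_0^r yR\,dy\leq\inf_{x}\int_0^r yN\,dy$, whence $\int_\rho^\infty(\inf_{x}\int_0^r yR\,dy)^{-1}dr\geq\int_\rho^\infty(\inf_{x}\int_0^r yN\,dy)^{-1}dr$. Here I would invoke the transience half of Theorem \ref{tm3.7}, which (working under (\ref{eq:3.2})) turns (\ref{eq:1.4}) into the finiteness of $\int_\rho^\infty(\inf_{x}\int_0^r yR\,dy)^{-1}dr$, that is (\ref{eq:3.7}); the displayed comparison then forces finiteness of the right-hand side, which is exactly (\ref{eq:3.11}).

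I do not expect a genuine obstacle here: the entire analytic difficulty --- unfolding $q(x,\xi)-\frac{1}{2}c(x)\xi^2$ into an integral of $R(x,\pi/\xi,y)$ and controlling the oscillation of $1-\cos\xi y$ --- has already been absorbed into Theorem \ref{tm3.7}. The only points requiring care are the elementary verification that infima (not merely suprema) respect the pointwise inequality, and the observation that this is precisely why the two ``if and only if'' statements of Theorem \ref{tm3.7} degrade to the one-directional implications of the corollary: the bound $R\leq N$ is in general strict, since passing from $R$ to $N$ reinstates the contribution of the complementary intervals $((2n+1)r-y,(2n+1)r+y]$ (the quantity $\bar R$ from the proof of Theorem \ref{tm3.7}), so neither implication can be reversed.
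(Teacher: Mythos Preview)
Your proposal is correct and matches the paper's approach exactly: the paper's proof is the single sentence ``The claim directly follows from the fact $N(x,y)\geq R(x,r,y)$ for all $x\in\R$ and all $0\leq y\leq r$,'' and you have simply spelled out why that inequality holds and how it feeds into Theorem~\ref{tm3.7}. Your parenthetical that the transience half of Theorem~\ref{tm3.7} operates under (\ref{eq:3.2}) is a fair observation --- the corollary inherits that hypothesis from Theorem~\ref{tm3.7} even though it is not restated.
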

\begin{proof} The claim directly follows from the fact
$N(x,y)\geq R(x,r,y)$ for all $x\in\R$ and all $0\leq y\leq r$.
\end{proof}
In addition, if we assume the quasi-unimodality of the L\'evy
measure $\nu(x,dy)$, then we can prove the equivalence in Corollary
\ref{c3.8}.
\begin{theorem}\label{tm3.9}Let $\process{F}$  be a
one-dimensional symmetric nice Feller process with symbol $q(x,\xi)$
and  L\'evy measure $\nu(x,dy)$, such that
\begin{enumerate}
\item [(i)]$\nu(x,dy)$ is quasi-unimodal uniformly in $x\in\R$
              \item [(ii)] the function $x\longmapsto\nu(x,O-x)$
            is lower
              semicontinuous for every open set $O\subseteq\R$, that
              is, $\liminf_{y\longrightarrow
              x}\nu(y,O-y)\geq\nu(x,O-x)$ for all $x\in\R$ and all open sets $O\subseteq\R$.
 \end{enumerate}  Then,
(\ref{eq:3.10}) holds true if, and only if, (\ref{eq:1.3}) holds
true. Further, if (\ref{eq:3.2})  and
\be\label{eq:3.12}\inf_{x\in\R}\int_{y_0}^{\infty}y\nu(x,(y,\infty))dy>0\ee
hold true for some $y_0>0$, then (\ref{eq:3.11}) holds true if, and
only if, (\ref{eq:1.4}) holds true.
\end{theorem}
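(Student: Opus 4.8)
The plan is to reduce both equivalences to Theorem \ref{tm3.7} and then to replace the quantity $R(x,r,y)$ appearing there by $N(x,y):=\nu(x,(y,\infty))$. Writing
$$I_R(x,r):=\int_0^r yR(x,r,y)\,dy\qquad\text{and}\qquad I_N(x,r):=\int_0^r yN(x,y)\,dy,$$
Theorem \ref{tm3.7} already characterises (\ref{eq:1.3}) and (\ref{eq:1.4}) in terms of $\sup_{x\in\R}I_R(\cdot,r)$ and $\inf_{x\in\R}I_R(\cdot,r)$, so it suffices to compare $I_R$ with $I_N$ uniformly in $x$. One inequality is free: since $\bigcup_{n\ge0}(2nr+y,(2n+2)r-y]\subseteq(y,\infty)$ we have $R(x,r,y)\le N(x,y)$ and hence $I_R(x,r)\le I_N(x,r)$; this already yields the implications recorded in Corollary \ref{c3.8}. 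The substance of the theorem is the reverse bound, and this is where hypotheses (i) and (ii) are used.

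For the reverse bound I would split $(y,\infty)$ into the $R$-slabs and the complementary \emph{even-gap} slabs,
$$(y,\infty)=\Big(\bigcup_{n\ge0}(2nr+y,(2n+2)r-y]\Big)\sqcup\Big(\bigcup_{n\ge1}(2nr-y,2nr+y]\Big),$$
so that $N(x,y)=R(x,r,y)+S(x,r,y)$ with $S(x,r,y):=\nu\big(x,\bigcup_{n\ge1}(2nr-y,2nr+y]\big)$. By quasi-unimodality (i), uniformly in $x$ there is a single $y_0$ beyond which $\nu(x,dz)$ has a nonincreasing density. Comparing each gap-slab $(2nr-y,2nr+y]$ of width $2y$ with the $R$-slab $((2n-2)r+y,2nr-y]$ of width $2r-2y$ immediately to its left and using monotonicity of the density gives, after summing over $n$, the pointwise estimate $S(x,r,y)\le\tfrac{y}{r-y}R(x,r,y)$, hence $N(x,y)\le 2R(x,r,y)$ for $y_0<y\le r/2$. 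Splitting $I_N(x,r)=\int_0^{y_0} yN\,dy+\int_{y_0}^{r/2}yN\,dy+\int_{r/2}^{r}yN\,dy$, the first integral is bounded uniformly in $x$ by (\textbf{C2}), the middle one is $\le 2I_R(x,r)$, and the last is controlled by the middle one because $N(x,\cdot)$ is nonincreasing (so $\int_{r/2}^r yN\le 4\int_{y_0}^{r/2}yN$ once $\rho$ is large enough that $r/4>y_0$). Altogether this gives $I_N(x,r)\le 10\,I_R(x,r)+B$ with a constant $B$ independent of $x$.

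With the two-sided comparison $I_R(x,r)\le I_N(x,r)\le 10\,I_R(x,r)+B$ established, I would pass to $\sup_{x\in\R}$ and $\inf_{x\in\R}$ (harmless, the bounds being pointwise) to conclude that $\sup_x I_N(\cdot,r)$ and $\sup_x I_R(\cdot,r)$, and likewise the two infima, are comparable up to the fixed constant and the additive term $B$. Inverting and integrating over $r\in(\rho,\infty)$ then yields (\ref{eq:3.6}) $\Leftrightarrow$ (\ref{eq:3.10}) and, on the transient side, (\ref{eq:3.7}) $\Leftrightarrow$ (\ref{eq:3.11}); the equivalences with (\ref{eq:1.3}) and (\ref{eq:1.4}) follow from Theorem \ref{tm3.7}, the transient case also invoking (\ref{eq:3.2}) and Proposition \ref{p2.4} to remove the dependence on $r$. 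The term $B$ is negligible because the dominant integrand diverges; in the transient direction I expect (\ref{eq:3.12}) to be exactly what forces $\inf_x I_N(\cdot,r)$, and thus $\inf_x I_R(\cdot,r)$, to stay bounded away from $0$ so that the reciprocals in (\ref{eq:3.7}) and (\ref{eq:3.11}) are well defined, while the lower semicontinuity (ii) guarantees that this infimum is genuinely positive rather than lost along an escaping sequence of base points.

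I expect the main obstacle to be precisely the uniform reverse comparison of the second paragraph: the decreasing-density argument must run with a single cut-off $y_0$ valid for all $x\in\R$ --- this is the force of requiring quasi-unimodality \emph{uniformly} in $x$ --- and the near-origin mass together with the range $y$ close to $r$ must be absorbed without spoiling the $x$-independence of the constants. Once this estimate is in place, the remainder is a routine transfer through Theorem \ref{tm3.7} and Proposition \ref{p2.4}.
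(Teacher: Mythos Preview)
Your comparison $I_R(x,r)\le I_N(x,r)\le 10\,I_R(x,r)+B$ is correct: the slab-by-slab density bound $S\le\frac{y}{r-y}R$ under uniform quasi-unimodality, together with the three-piece splitting of $I_N$, goes through exactly as you describe. Passing to suprema and infima, and noting that $\sup_x I_N(\cdot,r)$ and $\inf_x I_N(\cdot,r)$ are nondecreasing in $r$ (so either bounded or tending to $\infty$, in which case the additive $B$ is harmless for large $r$), you do obtain (\ref{eq:3.6}) $\Leftrightarrow$ (\ref{eq:3.10}) and (\ref{eq:3.7}) $\Leftrightarrow$ (\ref{eq:3.11}), and the theorem then follows from Theorem~\ref{tm3.7}.

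This is, however, a genuinely different route from the paper. The paper does not compare $R$ with $N$ directly. Instead it first \emph{constructs} an auxiliary nice Feller process $\process{\bar F}$ whose L\'evy measure $\bar\nu(x,dy)$ is finite and \emph{unimodal} (not merely quasi-unimodal) and agrees in its tail with $\nu(x,dy)$; by Theorem~\ref{tm3.1} the conditions (\ref{eq:1.3}) and (\ref{eq:1.4}) are then equivalent for $\process{F}$ and $\process{\bar F}$. For the unimodal $\bar\nu$ the paper invokes Sato's representation $\bar\nu(x,\cdot)=\text{law}(UF_x)$ with $U$ uniform on $[0,1]$, which converts the symbol into $2c\int(1-\tfrac{\sin\xi y}{\xi y})\bar\nu_U(x,dy)$ and yields a direct lower bound $\bar q(x,\xi)\ge 8c\bar c\,\xi^2\int_0^{1/|\xi|}y\bar N_U(x,y)\,dy$; a final ratio argument links $\bar N$ and $N$.

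Two consequences of this difference are worth recording. First, your guess about hypothesis (ii) is off: in the paper, lower semicontinuity of $x\mapsto\nu(x,O-x)$ is used \emph{only} to verify that the auxiliary compound-Poisson construction $\process{\bar F}$ is actually a Feller process (via \cite[Proposition 6.1.1]{meyn-tweedie-book}); it plays no role in bounding any infimum. Since your argument never builds an auxiliary process, you never need (ii). Second, your guess about (\ref{eq:3.12}) is also off: condition (\ref{eq:3.11}) itself forces $\inf_x I_N(\cdot,r)\to\infty$ (otherwise the reciprocal is not integrable), so the additive $B$ is absorbed automatically and (\ref{eq:3.12}) is not needed in your approach either. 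In the paper, (\ref{eq:3.12}) enters only in the ratio estimate comparing $\inf_x\int_0^r yN\,dy$ with $\inf_x\int_0^r y\bar N\,dy$ at the very end of Step~3. In short, your argument is more elementary and, as written, uses strictly fewer hypotheses than the statement; the paper's detour through an auxiliary process is what makes (ii) and (\ref{eq:3.12}) appear.
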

\begin{proof}The proof is divided  in three steps.

\textbf{Step 1.} In the first step, we construct a nice Feller
processes $\process{\bar{F}}$ with finite and unimodal L\'evy
measure  which has the same tails as $\nu(x,dy)$. Then, in
particular, by Theorem \ref{tm3.1},  the conditions in
(\ref{eq:1.3}) and (\ref{eq:1.4}) are equivalent for
$\process{\bar{F}}$ and $\process{F}$.
 By  assumptions
(i) and (\textbf{C2}),   there exists $y_0>1$ such
that $\sup_{x\in\R}\nu(x,(y_0-1,\infty))<\infty$ and
$y\longmapsto\nu(x,(y,\infty))$ is convex on $(y_0-1,\infty)$ for
all $x\in\R$. Next, let $f(x,y)$ be the density of $\nu(x,dy)$ on
$(y_0-1,\infty)$, that is, $\frac{\partial}{\partial
y}\nu(x,(y,\infty))=-f(x,y)$ on $(y_0-1,\infty)$.
                                                    Further, note
                                                    that
                                                    $\sup_{x\in\R}f(x,y_0)<\infty.$
                                                    Indeed, if this was not
                                                    the case, then
                                                    we would have
                                                    $$\sup_{x\in\R}\nu(x,(y_0-1,\infty))=\sup_{x\in\R}\int_{y_0-1}^{\infty}f(x,y)dy\geq\sup_{x\in\R}f(x,y_0)\int_{y_0-1}^{y_0}dy=\sup_{x\in\R}f(x,y_0)=\infty,$$  where in the second step we employed the fact that $y\longmapsto f(x,y)$ is decreasing on $(y_0-1,\infty)$ for all $x\in\R$.
Let $c:=y_0\sup_{x\in\R}f(x,y_0)+\sup_{x\in\R}\nu(x,(y_0,\infty))+1$
and let  $\bar{\nu}(x,dy)$ be a symmetric probability kernel on
$(\R,\mathcal{B}(\R))$ given by $\bar{\nu}(x,\{0\})=0$ and
$$\bar{\nu}(x,(y,\infty)):=\left\{\begin{array}{cc}
                                                     \displaystyle{\frac{\nu(x,(y_0,\infty))-c}{2cy_0}\lambda(0,y)+\frac{1}{2}}, & 0<y\leq y_0 \\
                                                      \displaystyle{\frac{\nu(x,(y,\infty))}{2c}},&
                                                      y\geq y_0,
                                                    \end{array}\right.$$
                                                    for all
                                                    $x\in\R.$
Note that, since $$\frac{\partial}{\partial
y}\bar{\nu}(x,(y,\infty))\big|_{y_0}=-\frac{f(x,y_0)}{2c}\quad
\textrm{and}\quad \frac{\partial}{\partial
y}\bar{\nu}(x,(y,\infty))=\frac{\nu(x,(y_0,\infty))-c}{2cy_0}$$ for
$y\in(0,y_0)$,
                                                    $\bar{\nu}(x,dy)$ is
                                                    unimodal.
Next, put $\bar{p}(x,dy):=\bar{\nu}(x,dy-x)$.  Clearly,
 $\bar{p}(x,dy)$ is a probability kernel on
$(\R,\mathcal{B}(\R))$ which defines a Markov chain, say
$\chain{F}$. Further, let $\process{P}$ be the Poisson process with
intensity $\lambda=2c$ independent of $\chain{F}$. Then, by
$\bar{F}_t:=F_{P_t}$, $t\geq0$, is  well defined  a Markov process
with the transition kernel
$$\mathbb{P}^{x}(\bar{F}_t\in dy)=e^{-2c
               t}\sum_{n=0}^{\infty}\frac{
               (2ct)^{n}}{n!}\bar{p}^{n}\left(x,dy\right),$$
               here $\bar{p}^{0}\left(x,dy\right)$ is the Dirac
               measure $\delta_x(dy)$ and
               $$\bar{p}^{n}\left(x,dy\right):=\int_{\R}\ldots\int_{\R}\bar{p}\left(x,dy_1\right)\ldots\bar{p}\left(y_{n-1},dy\right),$$
               for $n\geq1.$
Note that $\process{\bar{F}}$ is a nice Feller process with  symbol
$\bar{q}(x,\xi)=2c\int_\R(1-\cos\xi y)\bar{\nu}(x,dy)$. Indeed, the
               strong continuity property can be easily verified.
               Next,  in order to prove
               the continuity property of $x\longmapsto\int_\R\mathbb{P}^{x}(\bar{F}_t\in dy)f(y)$, for $f\in C_b(\R)$ and $t\geq0$, by \cite[Proposition
               6.1.1]{meyn-tweedie-book}, it suffices to show the
               lower semicontinuity property of the function
               $x\longmapsto\bar{\nu}(x,O-x)$  for all open
               sets $O\subseteq\R$. But this is the
               assumption (ii). Finally, we show that the
               function $x\longmapsto\int_\R\mathbb{P}^{x}(\bar{F}_t\in dy)f(y)$
               vanishes
               at infinity for all $f\in C_\infty(\R)$ and all $t\geq0$. Let $f\in C_\infty(\R)$ and $\varepsilon>0$ be
arbitrary and let $m>0$ be such that  $||f||_\infty\leq m$. Since
$C_c(\R)$ is dense in $(C_\infty(\R),||\cdot||_\infty)$, there
exists $f_\varepsilon\in C_c(\R)$ such that
$||f-f_\varepsilon||_\infty<\varepsilon$. We have
\begin{align*}\left|\int_\R \bar{p}(x,dy)f(y)\right|&\leq\int_\R
\bar{p}(x,dy)|f(y)|\\&<\int_\R
\bar{p}(x,dy)|f_\varepsilon(y)|+\varepsilon\\&=\int_{\R}\bar{\nu}(x,dy)|
f_\varepsilon(y+x)|dy+\varepsilon\\&\leq
(m+\varepsilon)\int_{\rm{supp}\,\it{f}_\varepsilon-x}\bar{\nu}(x,dy)+\varepsilon\\&=(m+\varepsilon)\bar{\nu}(x,\rm{supp}\,\it{f}_\varepsilon-x)+\varepsilon.\end{align*}
Now, since
$\rm{supp}\,\it{f}_\varepsilon:=\{y:f_{\varepsilon}(y)\neq\rm{0}\}$
has compact closure, it suffices to prove that
$\lim_{|x|\longrightarrow\infty}\bar{\nu}(x,C-x)=0$ for every
compact set $C\subseteq\R.$ Let $C\subseteq\R$ be a compact set.
Then, for arbitrary $r>0$ and $|x|$ large enough, we have
\begin{align*}\bar{\nu}(x,C-x)=\frac{\nu(x,C-x)}{2c}\leq\frac{\nu(x,(-r,r)^{c})}{
                                      2c}\leq\frac{\sup_{x\in\R}\nu(x,(-r,r)^{c})}{
                                      2c}.\end{align*}
                                      Hence,
$$\limsup_{|x|\longrightarrow\infty}\bar{\nu}(x,C-x)\leq\frac{\sup_{x\in\R}\nu(x,(-r,r)^{c})}{2c}.$$
                                      Now, by letting
                                      $r\longrightarrow\infty$,
from \cite[Theorem 4.4]{rene-conserv}, we get the desired result.
Finally, it can be easily verified that the symbol of
$\process{\bar{F}}$ is given by $\bar{q}(x,\xi)=2c\int_\R(1-\cos\xi
y)\bar{\nu}(x,dy)$  and obviously, by the definition,
$\process{\bar{F}}$  satisfies conditions
(\textbf{C1})-(\textbf{C4}).

\textbf{Step 2.} In Corollary \ref{c3.8} we have proved that
(\ref{eq:3.10}) implies (\ref{eq:1.3}). In the second step, we prove
the converse. Since $\bar{\nu}(x,dy)$ is unimodal, by \cite[Exercise
29.21]{sato-book}, there exists a random variable $F_x$ such that
$\bar{\nu}(x,dy)$ is the distribution of the random variable $UF_x$,
where $U$ is  uniformly distributed random variable on $[0,1]$
independent of $F_x$. Further, let $\bar{\nu}_U(x,dy)$ be the
distribution of the random variable $F_x$. By \cite[Lemma
38.6]{sato-book},
$\bar{\nu}_U(x,(y,\infty))\geq\bar{\nu}(x,(y,\infty))$ for all
$x\in\R$ and all $y\geq0$.
  Now, we have
\begin{align*}\bar{q}(x,\xi)&=2c\int_\R(1-\cos\xi y)\bar{\nu}(x,dy)\\&=2c\int_0^{1}\int_{\R}(1-\cos(\xi uy))\bar{\nu}_U(x,dy)du\\&=2c\int_\R\left(1-\frac{\sin\xi y}{\xi y}\right)\bar{\nu}_U(x,dy).\end{align*}
 Further,    since $$1-\frac{\sin y}{y}\geq \bar{c}\min\{1, y^{2}\}$$ for
 all $y\in\R$ and all $0<\bar{c}<\frac{1}{6}$,
              we have $$\bar{q}(x,\xi)\geq2c\bar{c}\int_\R\min\{1,(\xi y)^{2}\}\bar{\nu}_U(x,dy)=8c\bar{c}\xi^{2}\int_0^{\frac{1}{|\xi|}}y\bar{N}_U(x,y)dy,$$
where $\bar{N}_U(x,y):=\bar{\nu}_U(x,(y,\infty)),$ for $x\in\R$ and $y\geq0$. Finally, let us put
$\bar{N}(x,y):=\bar{\nu}(x,(y,\infty))$, for $x\in\R$ and $y\geq0$,
then we have
\begin{align*}\int_{\rho}^{\infty}\left(\sup_{x\in\R}\int_0^{r}y\bar{N}(x,y)dy\right)^{-1}dr&\geq\int_{\rho}^{\infty}\left(\sup_{x\in\R}\int_0^{r}y\bar{N}_U(x,y)dy\right)^{-1}dr\\&=\int_{\{|\xi|<\frac{1}{\rho}\}}\left(\xi^{2}\sup_{x\in\R}\int_0^{\frac{1}{|\xi|}}y\bar{N}_U(x,y)dy\right)^{-1}d\xi\\&\geq8c\bar{c}\int_{\{|\xi|<\frac{1}{\rho}\}}\frac{d\xi}{\sup_{x\in\R}\bar{q}(x,\xi)}.\end{align*}
Further, we have
\begin{align*}&\lim_{r\longrightarrow\infty}\frac{\sup_{x\in\R}\int_{y_0}^{r}yN(x,y)dy}{\sup_{x\in\R}\int_0^{y_0}y\bar{N}(x,y)dy+\frac{1}{2c}\sup_{x\in\R}\int_{y_0}^{r}yN(x,y)dy}\\&\leq\liminf_{r\longrightarrow\infty}\frac{\sup_{x\in\R}\int_0^{r}yN(x,y)dy}{\sup_{x\in\R}\int_0^{r}y\bar{N}(x,y)dy}\\&\leq\limsup_{r\longrightarrow\infty}\frac{\sup_{x\in\R}\int_0^{r}yN(x,y)dy}{\sup_{x\in\R}\int_0^{r}y\bar{N}(x,y)dy}\\&\leq\lim_{r\longrightarrow\infty}\frac{\sup_{x\in\R}\int_0^{y_0}yN(x,y)dy+\sup_{x\in\R}\int_{y_0}^{r}yN(x,y)dy}{\frac{1}{2c}\sup_{x\in\R}\int_{y_0}^{r}yN(x,y)dy}\end{align*}
Now, if $\sup_{x\in\R}\int_{y_0}^{\infty}yN(x,y)dy=0$ the claim
trivially follows. On the other hand, if\\
$\sup_{x\in\R}\int_{y_0}^{\infty}yN(x,y)dy>0$, we have
\be\label{eq:3.13}0<\liminf_{r\longrightarrow\infty}\frac{\sup_{x\in\R}\int_0^{r}yN(x,y)dy}{\sup_{x\in\R}\int_0^{r}y\bar{N}(x,y)dy}\leq\limsup_{r\longrightarrow\infty}\frac{\sup_{x\in\R}\int_0^{r}yN(x,y)dy}{\sup_{x\in\R}\int_0^{r}y\bar{N}(x,y)dy}<\infty,\ee
   which together with Proposition \ref{p2.4}  and Theorem \ref{tm3.1} proves the claim.

 \textbf{Step 3.} In the third step,  we consider the second part of the theorem. In Corollary \ref{c3.8} we have proved that
 (\ref{eq:1.4}) implies (\ref{eq:3.11}). Now, to prove the converse,
by completely the same arguments as in the second step, we have
\begin{align*}8c\bar{c}\int_{\{|\xi|<\frac{1}{\rho}\}}\frac{d\xi}{\inf_{x\in\R}\bar{q}(x,\xi)}\leq\int_{\rho}^{\infty}\left(\inf_{x\in\R}\int_0^{r}y\bar{N}(x,y)dy\right)^{-1}dr.\end{align*}
Now, the claim follows from Proposition \ref{p2.4} (ii),
(\ref{eq:3.2}), Theorem \ref{tm3.1}, (\ref{eq:3.12}) and a similar
argumentation as in (\ref{eq:3.13}).
\end{proof}
 In
the sequel we discuss some consequences of Theorem \ref{tm3.9}.
\begin{theorem}\label{tm3.10}Let $\process{F}$  be a
one-dimensional symmetric nice Feller process with  symbol
$$q(x,\xi)=\frac{1}{2}c(x)\xi^{2}+2\int_0^{\infty}(1-\cos\xi
y)\nu(x,dy)$$ satisfying  (\ref{eq:3.2}). Then, for any $r>0$, and
any $0<a<b<\pi$ fixed, either one of the following conditions
\begin{align}\label{eq:3.14}\int_{r}^{\infty}\frac{dy}{y^{2}\inf_{x\in\R}\left(N\left(x,by\right)-N\left(x,(a+\pi)y\right)\right)}<\infty,\end{align}
or
\begin{align}\label{eq:3.15}\int_r^{\infty}\frac{d\xi}{\inf_{x\in\R}\int_0^{\xi}y^{2}\nu(x,dy)}<\infty\end{align}
  implies (\ref{eq:1.4}).
So that if (\ref{eq:3.14}) or (\ref{eq:3.15}) is satisfied, then the
process $\process{F}$ is transient.
\end{theorem}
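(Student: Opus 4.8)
The plan is to derive, from each of the conditions (\ref{eq:3.14}) and (\ref{eq:3.15}) separately, a pointwise lower bound on $\inf_{x\in\R}q(x,\xi)$ valid for all small $|\xi|$, and then to recognise that after the change of variables $\xi\mapsto 1/\xi$ this lower bound turns the defining integral of (\ref{eq:1.4}) into a constant multiple of (\ref{eq:3.14}), respectively of (\ref{eq:3.15}). Once (\ref{eq:1.4}) is established, transience follows from Theorem \ref{tm1.3}, the sector condition (\ref{eq:1.1}) being automatic here (with constant $0$) because $\process{F}$ is symmetric, so that $q=\mathrm{Re}\,q$ and $\mathrm{Im}\,q\equiv 0$. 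Throughout I would discard the Gaussian part via $q(x,\xi)\geq 2\int_0^{\infty}(1-\cos\xi y)\nu(x,dy)$, which is legitimate since $c(x)\geq 0$; in particular the hypothesis (\ref{eq:3.2}) is not actually needed for this implication.

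For (\ref{eq:3.14}) I would restrict the integral defining $q$ to the $y$-window $\{y:\xi y\in(b,a+\pi)\}=(b/\xi,(a+\pi)/\xi)$ (taking $\xi>0$). Because $1-\cos\theta$ increases on $(0,\pi)$ and decreases on $(\pi,2\pi)$, on the closed interval $[b,a+\pi]$ it is bounded below by the strictly positive constant $c_{a,b}:=\min\{1-\cos b,\,1+\cos a\}$, using $0<a<b<\pi$. This yields $q(x,\xi)\geq 2c_{a,b}\,\nu(x,(b/\xi,(a+\pi)/\xi])=2c_{a,b}(N(x,b/\xi)-N(x,(a+\pi)/\xi))$, and taking infima, $\inf_{x}q(x,\xi)\geq 2c_{a,b}\inf_{x}(N(x,b/\xi)-N(x,(a+\pi)/\xi))$. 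Writing $\int_{\{|\xi|<r_0\}}=2\int_0^{r_0}$ by symmetry and substituting $\xi=1/y$ (so $d\xi=-dy/y^{2}$) gives $\int_{\{|\xi|<r_0\}}\frac{d\xi}{\inf_{x}q(x,\xi)}\leq\frac{1}{c_{a,b}}\int_{1/r_0}^{\infty}\frac{dy}{y^{2}\inf_{x}(N(x,by)-N(x,(a+\pi)y))}$, which is finite by (\ref{eq:3.14}) upon taking $r_0=1/r$. This is precisely (\ref{eq:1.4}).

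For (\ref{eq:3.15}) I would instead keep the window $y\in(0,\pi/(2\xi))$, on which $\xi y\in(0,\pi/2)$ and the elementary bound $1-\cos t\geq t^{2}/\pi$ for $|t|\leq\pi/2$ (already used earlier in the paper) gives $q(x,\xi)\geq\frac{2\xi^{2}}{\pi}\int_0^{\pi/(2\xi)}y^{2}\nu(x,dy)$, hence $\inf_{x}q(x,\xi)\geq\frac{2\xi^{2}}{\pi}\inf_{x}\int_0^{\pi/(2\xi)}y^{2}\nu(x,dy)$. Substituting $\xi=\pi/(2\eta)$, so that the cutoff $\pi/(2\xi)$ becomes $\eta$ and $d\xi/\xi^{2}=\tfrac{2}{\pi}\,d\eta$, transforms $\int_{\{|\xi|<r_0\}}\frac{d\xi}{\inf_{x}q(x,\xi)}$ into $2\int_{\pi/(2r_0)}^{\infty}\frac{d\eta}{\inf_{x}\int_0^{\eta}y^{2}\nu(x,dy)}$, which is finite by (\ref{eq:3.15}) once $r_0=\pi/(2r)$. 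Again this is (\ref{eq:1.4}), and Theorem \ref{tm1.3} delivers transience.

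The computations are essentially routine; the genuine decisions are the two truncation windows in $y$, tailored to reproduce the exact forms of (\ref{eq:3.14}) and (\ref{eq:3.15}), together with the matching substitutions $\xi=1/y$ and $\xi=\pi/(2\eta)$. The one point needing a little care is the \emph{some} $r>0$ quantifier in (\ref{eq:1.4}): since (\ref{eq:3.14}) and (\ref{eq:3.15}) assert finiteness only on a tail, I must choose $r_0$ so that the image of $(0,r_0)$ under the relevant substitution lands inside that tail, which is always possible by shrinking $r_0$; the positivity of the truncated tails and moments on that range needed for the integrands to be meaningful is already built into the finiteness of (\ref{eq:3.14}) and (\ref{eq:3.15}).
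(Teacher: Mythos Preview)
Your proof is correct and in spirit matches the paper's, but it is cleaner in two places worth noting.

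For (\ref{eq:3.14}) the paper first integrates by parts to rewrite the jump part as $2\xi\int_0^{\infty}N(x,y)\sin\xi y\,dy$, then exploits periodicity of $\sin$ and monotonicity of $N$ to reach $q(x,\xi)-\tfrac12 c(x)\xi^{2}\geq c(a,b)\bigl(N(x,b/\xi)-N(x,(a+\pi)/\xi)\bigr)$ with $c(a,b)=2(b-a)\inf_{(a,b)}\sin$. You instead restrict directly to $\{y:\xi y\in(b,a+\pi)\}$ and bound $1-\cos\theta\geq\min\{1-\cos b,\,1+\cos a\}>0$ on $[b,a+\pi]$; this reaches the same lower bound (with a different positive constant) without integration by parts, and is the more elementary route.

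For the passage from the jump part to the full symbol, the paper invokes Proposition~\ref{p2.4} and Theorem~\ref{tm3.1}, which is where hypothesis (\ref{eq:3.2}) enters. You bypass this entirely via $q(x,\xi)\geq q(x,\xi)-\tfrac12 c(x)\xi^{2}$ (using $c(x)\geq0$), so that the lower bound on the jump part is already a lower bound on $\inf_{x}q(x,\xi)$ and (\ref{eq:1.4}) follows directly. Your observation that (\ref{eq:3.2}) is not actually needed for this implication is correct; the paper's detour through Theorem~\ref{tm3.1} is unnecessary here. For (\ref{eq:3.15}) the two arguments are essentially identical, differing only in the choice of cutoff ($1/|\xi|$ versus $\pi/(2|\xi|)$) and the corresponding substitution.
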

\begin{proof} Let $N(x,y):=\nu(x,(y,\infty)),$ for $x\in\R$ and $y\geq0$. Now, by
the integration by parts formula, we have
\begin{align*}q(x,\xi)-\frac{1}{2}c(x)\xi^{2}&=2\int_0^{\infty}(1-\cos\xi
y)\nu(x,dy)\\&=2\int_0^{\infty}(1-\cos\xi
y)d(-N(x,y))\\
&=2\xi\int_0^{\infty}\sin\xi y\,N(x,y)dy\\&=2\int_0^{\infty}\sin
y\,N\left(x,\frac{y}{\xi}\right)dy\\&\geq 2\int_0^{2\pi}\sin
y\,N\left(x,\frac{y}{\xi}\right)dy\\&=2\int_0^{\pi}\sin
y\left(N\left(x,\frac{y}{\xi}\right)-N\left(x,\frac{y+\pi}{\xi}\right)\right)dy,\end{align*}
where in the last two inequalities we used the periodicity of the
sine function  and the nonincreasing property of $y\longmapsto
N(x,y)$. Therefore, for any $0<a<b<\pi$, we have that
$$q(x,\xi)-\frac{1}{2}c(x)\xi^{2}\geq2\int_a^{b}\sin
y\left(N\left(x,\frac{y}{\xi}\right)-N\left(x,\frac{y+\pi}{\xi}\right)\right)dy,$$
and hence $$q(x,\xi)-\frac{1}{2}c(x)\xi^{2}\geq
c(a,b)\left(N\left(x,\frac{b}{\xi}\right)-N\left(x,\frac{a+\pi}{\xi}\right)\right),$$
where $c(a,b):=2(b-a)\inf_{y\in(a,b)}\sin y.$ This yields
\begin{align*}\int_{\{|\xi|<\frac{1}{r}\}}\frac{d\xi}{\inf_{x\in\R}\left(q(x,\xi)-\frac{1}{2}c(x)\xi^{2}\right)}&\leq c^{-1}(a,b)\int_{0}^{\frac{1}{r}}\frac{d\xi}{\inf_{x\in\R}\left(N\left(x,\frac{b}{\xi}\right)-N\left(x,\frac{a+\pi}{\xi}\right)\right)}\\&
=c^{-1}(a,b)\int_{r}^{\infty}\frac{dy}{y^{2}\inf_{x\in\R}\left(N\left(x,by\right)-N\left(x,(a+\pi)y\right)\right)},\end{align*}
which together with Proposition \ref{p2.4}  and Theorem \ref{tm3.1}
proves the desired result.

To prove the second claim, first note that
$$q(x,\xi)-\frac{1}{2}c(x)\xi^{2}=2\int_0^{\infty}(1-\cos\xi
y)\nu(x,dy)\geq\frac{2}{\pi}\xi^{2}\int_0^{\frac{1}{|\xi|}}y^{2}\nu(x,dy),$$
where  we applied  the fact that $1-\cos y\geq\frac{1}{\pi}y^{2}$
for all $|y|\leq\frac{\pi}{2}.$ This yields
\begin{align*}\int_{\{|\xi|<\frac{1}{r}\}}\frac{d\xi}{\inf_{x\in\R}\left(q(x,\xi)-\frac{1}{2}c(x)\xi^{2}\right)}&\leq\frac{\pi}{2}\int_{\{|\xi|<\frac{1}{r}\}}\frac{d\xi}{\xi^{2}\inf_{x\in\R}\int_0^{\frac{1}{|\xi|}}y^{2}\nu(x,dy)}\\&=\pi\int_{r}^{\infty}\frac{d\xi}{\inf_{x\in\R}\int_0^{\xi}y^{2}\nu(x,dy)},\end{align*}
where in the second step we made the substitution
$\xi\longrightarrow\xi^{-1}$. Now, the desired result is an
immediate consequence of Proposition \ref{p2.4} and Theorem
\ref{tm3.1}.

\end{proof}
In addition, by assuming the quasi-unimodality property of the L\'evy measure we get the following sufficient condition for transience.

\begin{theorem}\label{tm3.11}Let $\process{F}$  be a
one-dimensional symmetric nice Feller process with  symbol
$q(x,\xi)$ and  L\'evy measure $\nu(x,dy)$ satisfying the
assumptions from Theorem \ref{tm3.9}. Further, let $y_0>0$  be a
constant of uniform quasi-unimodality  of $\nu(x,dy)$. Then,
$$\int_{y_0}^{\infty}\frac{dy}{y^{3}\inf_{x\in\R}f(x,y)}<\infty$$
implies (\ref{eq:1.4}).
\end{theorem}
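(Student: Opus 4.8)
The plan is to derive the transience condition (\ref{eq:1.4}) by verifying hypothesis (\ref{eq:3.14}) of Theorem \ref{tm3.10} and then invoking that theorem. The quasi-unimodality assumption inherited from Theorem \ref{tm3.9} is precisely what makes the statement meaningful and drives the argument: since $\nu(x,dy)$ is quasi-unimodal uniformly in $x\in\R$ with constant $y_0>0$, for each $x$ the tail $y\longmapsto N(x,y):=\nu(x,(y,\infty))$ is convex on $(y_0,\infty)$, hence admits there a density $f(x,y)=-\partial_y N(x,y)$ which is nonincreasing in $y$. This is exactly the $f(x,y)$ appearing in the statement.

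First I would establish the key tail estimate. Fix $0<a<b<\pi$, so that $b<a+\pi$. For $y>y_0/b$ the whole interval $[by,(a+\pi)y]$ lies in $(y_0,\infty)$, and using that $f(x,\cdot)$ is nonincreasing I get, for each $x\in\R$,
$$N(x,by)-N(x,(a+\pi)y)=\int_{by}^{(a+\pi)y}f(x,z)\,dz\geq(a+\pi-b)\,y\,f(x,(a+\pi)y).$$
Taking the infimum over $x$ and bounding the integrand of (\ref{eq:3.14}) from above, the substitution $w=(a+\pi)y$ gives, for $r>y_0/b$,
$$\int_{r}^{\infty}\frac{dy}{y^{2}\inf_{x\in\R}\left(N(x,by)-N(x,(a+\pi)y)\right)}\leq\frac{(a+\pi)^{2}}{a+\pi-b}\int_{(a+\pi)r}^{\infty}\frac{dw}{w^{3}\inf_{x\in\R}f(x,w)}<\infty,$$
the finiteness being exactly the hypothesis. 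Thus (\ref{eq:3.14}) holds (the range restriction $r>y_0/b$ is harmless, since both (\ref{eq:3.14}) and (\ref{eq:1.4}) are tail conditions).

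Before quoting Theorem \ref{tm3.10} I must check its standing assumption (\ref{eq:3.2}), which the hypothesis also supplies. By Cauchy--Schwarz, writing the constant $y^{-1/2}$ as $(y^{2}\inf_{x\in\R}f(x,y))^{1/2}\,(y^{3}\inf_{x\in\R}f(x,y))^{-1/2}$, the convergence of $\int_{y_0}^{\infty}(y^{3}\inf_{x\in\R}f(x,y))^{-1}dy$ forces $\int_{y_0}^{R}y^{2}\inf_{x\in\R}f(x,y)\,dy\geq cR$ for some $c>0$ and all large $R$. Combining this with $1-\cos\eta\geq\pi^{-1}\eta^{2}$ for $|\eta|\leq\pi/2$ exactly as in the proof of Theorem \ref{tm3.10}, I obtain, for $|\xi|$ small,
$$\inf_{x\in\R}q(x,\xi)\geq\frac{2}{\pi}\xi^{2}\int_{y_0}^{1/|\xi|}y^{2}\inf_{x\in\R}f(x,y)\,dy\geq\frac{2c}{\pi}|\xi|,$$
whence $\inf_{x\in\R}q(x,\xi)/\xi^{2}\longrightarrow\infty$ as $\xi\longrightarrow0$; that is, (\ref{eq:3.2}) holds. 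Theorem \ref{tm3.10} then yields (\ref{eq:1.4}), i.e. transience.

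The only genuinely non-routine points, and where I would be most careful, are (i) recognizing that the weight $y^{-3}$ in the hypothesis is matched precisely by the pair ``factor $y^{-2}$ from (\ref{eq:3.14}) plus one extra $y^{-1}$ from the monotone-density lower bound'', and (ii) that (\ref{eq:3.2}) is not assumed but must be extracted from the same hypothesis via the Cauchy--Schwarz lower bound on $\int y^{2}\inf_{x\in\R}f(x,y)\,dy$. Everything else is routine bookkeeping.
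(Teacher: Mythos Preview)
Your proof is correct but follows a different path from the paper. The paper argues via Theorem~\ref{tm3.9}: it shows that for $r\geq 2y_0$ one has
\[
\int_0^{r}yN(x,y)\,dy\;\geq\;\int_{y_0}^{r}\!\int_{y}^{r}y\,f(x,u)\,du\,dy\;\geq\;f(x,r)\int_{y_0}^{r}y(r-y)\,dy\;\geq\;c\,r^{3}f(x,r),
\]
using monotonicity of $f(x,\cdot)$ on $(y_0,\infty)$, and then concludes (\ref{eq:3.11}) and hence (\ref{eq:1.4}) by Theorem~\ref{tm3.9}. You instead go through Theorem~\ref{tm3.10}: you bound $N(x,by)-N(x,(a+\pi)y)\geq (a+\pi-b)\,y\,f(x,(a+\pi)y)$ directly and verify (\ref{eq:3.14}).

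Your route is in fact the one the paper itself sketches in the remark immediately following Theorem~\ref{tm3.11} as a \emph{strengthening}: it dispenses with hypothesis (ii) (lower semicontinuity of $x\mapsto\nu(x,O-x)$) and with (\ref{eq:3.12}), both of which the paper needs to invoke Theorem~\ref{tm3.9}. One clarification: under the paper's reading, ``the assumptions from Theorem~\ref{tm3.9}'' already include (\ref{eq:3.2}), so your Cauchy--Schwarz derivation of (\ref{eq:3.2}) from the integrability hypothesis is not strictly required for the stated result; it is, however, a genuine extra observation that lets you drop that standing assumption as well. In short, both arguments hinge on the same monotone-density lower bound, but you feed it into Theorem~\ref{tm3.10} rather than Theorem~\ref{tm3.9}, which buys a cleaner set of hypotheses.
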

\begin{proof}  By Theorem \ref{tm3.9}, it suffices to show that $$\int_{y_0}^{\infty}\left(\inf_{x\in\R}\int_0^{r}yN(x,y)dy\right)^{-1}dr<\infty.$$ For all $r\geq 2y_0$, we have
\begin{align*}\int_0^{r}yN(x,y))dy&\geq\int_{y_0}^{r}\int_{y}^{\infty}yf(x,u)dudy\\&=\int_{y_0}^{r}\int_y^{r}yf(x,u)dudy+\int_{y_0}^{r}\int_r^{\infty}yf(x,u)dudy\\
&\geq\int_{y_0}^{r}\int_y^{r}yf(x,u)dudy\\&=
\frac{r^{3}-3ry_0^{2}+2y_0^{3}}{6}f(x,r)\\ &\geq \frac{2y_0^{3}}{3}r^{3}f(x,r).
\end{align*}  Note that in
the fourth inequality we took into account the fact that the
densities $f(x,y)$ are decreasing on $(y_0,\infty)$ for all
$x\in\R$. Now, we have
$$\int_{2y_0}^{\infty}\left(\inf_{x\in\R}\int_0^{r}yN(x,y)dy\right)^{-1}dr\leq\frac{3}{2y_0^{3}}\int_{2y_0}^{\infty}\frac{dy}{y^{3}\inf_{x\in\R}f(x,y)}\leq\frac{3}{2y_0^{3}}\int_{y_0}^{\infty}\frac{dy}{y^{3}\inf_{x\in\R}f(x,y)},$$
which completes the proof.
\end{proof}
Note that Theorem \ref{tm3.11} can be strengthened. By assuming only
uniform quasi-unimodality (and the condition in (\ref{eq:3.2})) of
the L\'evy measure $\nu(x,dy)$,  we have
\begin{align*}\int_{r}^{\infty}\frac{dy}{\inf_{x\in\R}\int_0^{y}u^{2}\nu(x,du)}&\leq\int_{r}^{\infty}\frac{dy}{\inf_{x\in\R}\int_{y_0}^{y}u^{2}f(x,u)du}\\&\leq3\int_{r}^{\infty}\frac{d\xi}{(y^{3}-y^{3}_0)\inf_{x\in\R}f(x,y)}\\&\leq
c\int_{r}^{\infty}\frac{d\xi}{y^{3}\inf_{x\in\R}f(x,y)},\end{align*}
where $y_0>0$ is a constant of uniform quasi-unimodality of
$\nu(x,dy)$ and $r>y_0$ and $c>\frac{3r^{3}}{r^{3}-y_0^{3}}$ are
arbitrary. Now, the claim is a direct consequence of Theorem
\ref{tm3.10}. Let us also remark that in the L\'evy process case the
condition from Theorem \ref{tm3.11} holds true even without the
quasi-unimodality assumption, that is, the corresponding density
does not have to be decreasing (see \cite{sandric-levy}).

We conclude this paper with some comparison conditions for the recurrence and transience in terms of the L\'evy measure.
Directly from Theorem \ref{tm3.9} we can generalize the results from
Theorem \ref{tm3.5} and Corollary \ref{c3.6}.
\begin{theorem}\label{tm3.12}
Let $\process{F^{1}}$ and $\process{F^{2}}$  be  one-dimensional
symmetric nice Feller processes with  symbols $q_1(x,\xi)$ and
$q_2(x,\xi)$ and  L\'evy measures $\nu_1(x,dy)$ and and
$\nu_2(x,dy)$, respectively. Let us put
$$N_1\left(x,y\right):=\nu_1\left(x,(y,\infty)\right)\quad \textrm{and}\quad N_2\left(x,y\right):=\nu_2\left(x,(y,\infty)\right),$$
for $x\in\R$ and $y\geq0$. If $N_1\left(x,y\right)$ has a bigger
tail than $N_2\left(x,y\right)$, uniformly in $x\in\R$, then, for
arbitrary $\rho>0$,
$$\int_\rho^{\infty}\left(\sup_{x\in\R}\int_0^{r}yN_1(x,y)dy\right)^{-1}dr=\infty$$
implies
$$\int_\rho^{\infty}\left(\sup_{x\in\R}\int_0^{r}yN_2(x,y)dy\right)^{-1}dr=\infty.$$
 In addition, if  (\ref{eq:3.12}) holds true, then, for arbitrary
$\rho>0$,
$$\int_\rho^{\infty}\left(\inf_{x\in\R}\int_0^{r}yN_2(x,y)dy\right)^{-1}dr<\infty$$
 implies
$$\int_\rho^{\infty}\left(\inf_{x\in\R}\int_0^{r}yN_1(x,y)dy\right)^{-1}dr<\infty.$$
\end{theorem}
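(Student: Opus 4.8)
The plan is to reduce both implications to an elementary comparison of the two weight functions $g_j(r):=\sup_{x\in\R}\int_0^r yN_j(x,y)\,dy$ and $h_j(r):=\inf_{x\in\R}\int_0^r yN_j(x,y)\,dy$, using that the bigger-tail hypothesis controls the integrands for large $y$ while condition (\textbf{C2}) controls the (uniformly bounded) contribution near the origin. First I would fix the $y_0>0$ furnished by the definition of ``bigger tail uniformly in $x$'', so that $N_1(x,y)\ge N_2(x,y)$ for all $x\in\R$ and all $y\ge y_0$. Then, by Fubini's theorem,
\[
\int_0^{y_0} yN_j(x,y)\,dy=\tfrac12\int_{(0,\infty)}\min\{u,y_0\}^{2}\,\nu_j(x,du)\le\tfrac{\max\{1,y_0^{2}\}}{2}\int_{(0,\infty)}\min\{1,u^{2}\}\,\nu_j(x,du),
\]
and since $\process{F^{1}}$ and $\process{F^{2}}$ are nice Feller processes they satisfy (\textbf{C2}), whence $M:=\max_{j\in\{1,2\}}\sup_{x\in\R}\int_0^{y_0}yN_j(x,y)\,dy<\infty$. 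This is the only place where the behaviour of the L\'evy measures near the origin enters.

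For the recurrence comparison I would split the integral at $y_0$: for every $x\in\R$ and every $r\ge y_0$, using $N_1\ge N_2$ on $(y_0,\infty)$ gives $\int_0^r yN_2(x,y)\,dy\le M+\int_0^r yN_1(x,y)\,dy$, and taking the supremum over $x$ yields $g_2(r)\le M+g_1(r)$. Hence $g_2(r)^{-1}\ge (M+g_1(r))^{-1}$, so it suffices to show that divergence of $\int_\rho^\infty g_1(r)^{-1}\,dr$ forces divergence of $\int_\rho^\infty (M+g_1(r))^{-1}\,dr$. This is immediate from monotonicity of $g_1$: either $g_1$ stays bounded, in which case both integrands are bounded below by a positive constant, or $g_1\to\infty$ and $M+g_1\le 2g_1$ eventually, so $(M+g_1)^{-1}\ge\tfrac12 g_1^{-1}$. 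Integrating $g_2^{-1}\ge(M+g_1)^{-1}$ then transfers the divergence from $N_1$ to $N_2$, which is the first assertion.

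The transience comparison runs symmetrically, and this is where I expect the main obstacle, because the sign of the $M$-term now works against us. The same splitting gives, for each $x$ and $r\ge y_0$, $\int_0^r yN_1(x,y)\,dy\ge\int_0^r yN_2(x,y)\,dy-M$, and taking the infimum over $x$ yields $h_1(r)\ge h_2(r)-M$. Since $h_2$ is nondecreasing and $\int_\rho^\infty h_2(r)^{-1}\,dr<\infty$, necessarily $h_2(r)\to\infty$, so choosing $\rho_1$ with $h_2(r)\ge 2M$ for $r\ge\rho_1$ gives $h_1(r)\ge\tfrac12 h_2(r)>0$ and thus $\int_{\rho_1}^\infty h_1(r)^{-1}\,dr\le 2\int_{\rho_1}^\infty h_2(r)^{-1}\,dr<\infty$. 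The delicate point is the region where $h_2\le M$, on which the bound $h_1\ge h_2-M$ is vacuous; there I would invoke condition (\ref{eq:3.12}), which (by $N_1\ge N_2$ it passes from $\nu_2$ to $\nu_1$) guarantees $\inf_{x\in\R}\int_{y_0}^{\infty}yN_1(x,y)\,dy>0$, so that the infimum-weights are bounded away from $0$ and the reciprocals are meaningful and locally integrable over the relevant range of $\rho$. Combining the two regimes establishes the transience implication; as indicated in the preceding remarks, the resulting integral conditions are then read off as recurrence, respectively transience, via Proposition \ref{p2.4} and Theorem \ref{tm3.9}, thereby generalizing Theorem \ref{tm3.5} and Corollary \ref{c3.6}.
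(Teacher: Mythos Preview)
Your argument is essentially the right one, and it is in fact more detailed than what the paper offers: the paper gives no proof at all for this theorem, stating only that it follows ``directly from Theorem~\ref{tm3.9}'' as a generalization of Theorem~\ref{tm3.5} and Corollary~\ref{c3.6}. Your direct comparison of $g_j$ and $h_j$ via the splitting at $y_0$ together with the bound $M<\infty$ from condition~(\textbf{C2}) is exactly the natural route.

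Two small points are worth tightening. First, the closing sentence is extraneous: the theorem as stated is a pure comparison between the $N$-integral conditions and does not itself assert recurrence or transience, so there is no need to invoke Proposition~\ref{p2.4} or Theorem~\ref{tm3.9} at the end. Second, and more substantively, your use of (\ref{eq:3.12}) in the transience part is not quite what you claim. You assert that (\ref{eq:3.12}) forces $h_1$ to be bounded away from zero on $[\rho,\rho_1]$, but $\inf_x\int_{y_0}^\infty yN_1(x,y)\,dy>0$ does not by itself give a lower bound on $\inf_x\int_0^r yN_1(x,y)\,dy$ for small $r$ (the tail mass could sit arbitrarily far out). What actually saves you is simpler: from $h_1\geq h_2-M$ and $h_2\to\infty$ you get $h_1(r)>0$ for all $r\ge\rho_1$, and the finiteness of $\int_\rho^{\rho_1}h_1^{-1}\,dr$ then only needs $h_1>0$ on the \emph{bounded} interval $(\rho,\rho_1]$; by monotonicity this reduces to $h_1(\rho+)>0$. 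Whether this last positivity genuinely requires (\ref{eq:3.12}), or whether the theorem is tacitly meant for $\rho$ large enough (as in Theorem~\ref{tm3.7}), is left unspecified by the paper as well. Your identification of this as ``the delicate point'' is accurate; just be aware that (\ref{eq:3.12}) does not resolve it in the way you describe.
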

\begin{theorem}  \label{tm3.13}
Let $\process{F^{1}}$ and  $\process{F^{2}}$ be  one-dimensional
symmetric nice Feller processes with  symbols $q_1(x,\xi)$ and
$q_2(x,\xi)$ and  L\'evy measures $\nu_1(x,dy)$ and $\nu_2(x,dy)$,
respectively. Further, assume  that
\begin{enumerate}
\item [(i)]$\nu_1(x,dy)$ is quasi-unimodal uniformly in $x\in\R$
\item [(ii)] $\nu_1(x,dy)$ has a bigger tail than $\nu_2(x,dy)$ uniformly in $x\in\R$
              \item [(iii)] the function $x\longmapsto\nu_1(x,O-x)$
            is lower
              semicontinuous for every open set $O\subseteq\R$.
 \end{enumerate}   Then,  for arbitrary $r>0$,
$$\int_{\{|\xi|<r\}}\frac{d\xi}{\sup_{x\in\R}q_1(x,\xi)}=\infty$$
implies
$$\int_{\{|\xi|<r\}}\frac{d\xi}{\sup_{x\in\R}q_2(x,\xi)}=\infty.$$
In addition, if $q_1(x,\xi)$ satisfies  (\ref{eq:3.2}) and
(\ref{eq:3.12}), then, for all $r>0$ small enough,
$$\int_{\{|\xi|<r\}}\frac{d\xi}{\inf_{x\in\R}q_2(x,\xi)}<\infty$$
implies
$$\int_{\{|\xi|<r\}}\frac{d\xi}{\inf_{x\in\R}q_1(x,\xi)}<\infty.$$

\end{theorem}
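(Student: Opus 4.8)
The plan is to prove Theorem \ref{tm3.13} purely by chaining together the three tail-based results already established --- Corollary \ref{c3.8}, Theorem \ref{tm3.9} and Theorem \ref{tm3.12} --- while keeping careful track of \emph{which} process each hypothesis is attached to. The asymmetry of the assumptions is decisive: quasi-unimodality (i) and lower semicontinuity (iii) are imposed only on $\nu_1(x,dy)$, so for $\process{F^{1}}$ I may invoke the full equivalence of Theorem \ref{tm3.9}, whereas for $\process{F^{2}}$ I only have at my disposal the one-directional Corollary \ref{c3.8}, which needs no regularity on the L\'evy measure.

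First I would treat the recurrence (supremum) part. Assuming $\int_{\{|\xi|<r\}}d\xi/\sup_{x\in\R}q_1(x,\xi)=\infty$, I apply Theorem \ref{tm3.9} to $\process{F^{1}}$, which is legitimate since $\nu_1$ satisfies (i) and (iii), to convert this into the tail condition (\ref{eq:3.10}) for $N_1$. Next, hypothesis (ii) (the bigger-tail comparison) feeds directly into the first part of Theorem \ref{tm3.12}, yielding (\ref{eq:3.10}) for $N_2$. Finally, Corollary \ref{c3.8} passes from (\ref{eq:3.10}) for $N_2$ back to $\int_{\{|\xi|<r\}}d\xi/\sup_{x\in\R}q_2(x,\xi)=\infty$, completing this half.

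For the transience (infimum) part I would run essentially the same chain in the opposite order. Starting from $\int_{\{|\xi|<r\}}d\xi/\inf_{x\in\R}q_2(x,\xi)<\infty$, Corollary \ref{c3.8} gives the tail condition (\ref{eq:3.11}) for $N_2$ (for all $\rho$ large enough). The second part of Theorem \ref{tm3.12}, whose hypothesis (\ref{eq:3.12}) is now supplied for $\nu_1$, transports (\ref{eq:3.11}) from $N_2$ to $N_1$. Then the second part of Theorem \ref{tm3.9}, applicable to $\process{F^{1}}$ because $\nu_1$ enjoys (i) and (iii) together with the standing assumptions (\ref{eq:3.2}) and (\ref{eq:3.12}), converts (\ref{eq:3.11}) for $N_1$ into $\int_{\{|\xi|<r\}}d\xi/\inf_{x\in\R}q_1(x,\xi)<\infty$, as desired.

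I do not expect a genuinely hard analytic step here, since all the real work --- the Fourier/tail estimates of Theorem \ref{tm3.7} and the delicate unimodal comparison in Step 2 of Theorem \ref{tm3.9} --- is already carried out in the cited results. The main point requiring care is bookkeeping: ensuring that at each link the regularity hypotheses are demanded only of $\nu_1$, that the two flavours of tail condition (``for arbitrary $\rho$'' versus ``for all $\rho$ large enough'') match up across Corollary \ref{c3.8}, Theorem \ref{tm3.9} and Theorem \ref{tm3.12}, and that the direction of each implication is compatible with the one-sided nature of Corollary \ref{c3.8}. A minor subtlety is that the independence of these conditions on $r$ (respectively $\rho$), guaranteed by Proposition \ref{p2.4} in the symmetric one-dimensional setting, is what lets me state the conclusion for the same $r$ throughout.
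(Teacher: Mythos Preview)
Your proposal is correct and matches the paper's intended approach: the paper states Theorem \ref{tm3.13} without proof, noting only that it (together with Theorem \ref{tm3.12}) follows ``directly from Theorem \ref{tm3.9}''. Your chain --- Theorem \ref{tm3.9} for $\process{F^{1}}$, then Theorem \ref{tm3.12}, then Corollary \ref{c3.8} for $\process{F^{2}}$ (and the reverse order for the transience part) --- is exactly the argument the author has in mind, and your observation that the regularity hypotheses (i) and (iii) are needed only on the $\nu_1$ side is the key bookkeeping point.
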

Finally, let us remark that, according to Proposition \ref{p2.4}, if
the function
$\xi\longmapsto\inf_{x\in\R}\sqrt{\it{q}\left(x,\xi\right)}$ is
subadditive, then  the statements of Theorems \ref{tm3.7},
\ref{tm3.9} and \ref{tm3.13} and Corollary  \ref{c3.8} (involving
the conditions in (\ref{eq:1.4}) and (\ref{eq:3.11})) do not depend
on $r>0$ and $\rho>0.$
 \section*{Acknowledgement} This work has been  supported in part by Croatian Science Foundation under the project $3526$. The author would like to thank   Bj\"{o}rn B\"{o}ttcher
 for turning his attention to Theorem \ref{tm2.6}.  Many thanks to Zoran Vondra\v{c}ek and Jian Wang for
 helpful suggestions and comments.  The author
also thanks the anonymous reviewer for careful reading of the paper
and for helpful comments that led to improvement of the
presentation.

\bibliographystyle{plain}
\bibliography{References}

\end{document}